\documentclass[12pt]{amsart}
\usepackage{amssymb}
\usepackage{mathrsfs}
\usepackage{a4}
\usepackage{graphicx}

\usepackage{enumitem}
\usepackage{xcolor}
\usepackage[all]{xy}

\newtheorem{theorem}{Theorem}[section]
\newtheorem{lemma}[theorem]{Lemma}
\newtheorem{proposition}[theorem]{Proposition}
\newtheorem{corollary}[theorem]{Corollary}
\newtheorem{question}[theorem]{Question}
\theoremstyle{definition}
\newtheorem{definition}[theorem]{Definition}

\theoremstyle{remark}
\newtheorem{remark}[theorem]{Remark}
\newtheorem{claim}{Claim}

\newtheorem{case}{Case}
\newtheorem{step}{Step}

\numberwithin{equation}{section}

\makeatletter
\def\Fix{\mathop{\operator@font Fix}\nolimits}
\def\Sh{\mathop{\operator@font Sh}\nolimits}
\def\id{\mathop{\operator@font id}\nolimits}
\def\st{\mathop{\operator@font st}\nolimits}
\def\RP{\mathop{\operator@font \mathbb{R} P}\nolimits}
\def\TRP{\mathop{\operator@font T\mathbb{R} P}\nolimits}
\def\int{\mathop{\operator@font int}\nolimits}
\def\AR{\ensuremath{\mathsf{AR}}}
\def\ANR{\ensuremath{\mathsf{ANR}}}
\def\id{\ensuremath{{\mathop{\operator@font id}\nolimits}}}
\def\LC{\ensuremath{{\mathop{\operator@font LC}\nolimits}}}
\def\int{\ensuremath{{\mathop{\operator@font int}\nolimits}}}
\def\ker{\ensuremath{{\mathop{\operator@font ker}\nolimits}}}
\def\bd{\ensuremath{{\mathop{\operator@font bd}\nolimits}}}
\def\disp{\ensuremath{{\mathop{\operator@font disp}\nolimits}}}
\def\displ{\ensuremath{{\mathop{\operator@font disp}\nolimits}}}
\def\dom{\ensuremath{{\mathop{\operator@font dom}\nolimits}}}
\def\interior{\ensuremath{{\mathop{\operator@font Int}\nolimits}}}
\def\boundary{\ensuremath{{\mathop{\operator@font Bd}\nolimits}}}
\def\identity{\ensuremath{{\mathop{\operator@font id}\nolimits}}}
\def\diam{\ensuremath{{\mathop{\operator@font diam}\nolimits}}}
\def\sgn{\mathop{\operator@font sgn}\nolimits}
\makeatother

\def\mathcal#1{\mathscr{#1}}

\begin{document}

\title[Fixed-Point Sets]{Fixed-point sets of semifree group actions on infinite-dimensional compact convex sets and their manifolds}

\author{Jan van Mill}
\address{KdV Institute for Mathematics,
University of Amsterdam,
Science Park 105-107,
P.O. Box 94248,
1090 GE Amsterdam, The Netherlands}
\email{j.vanMill@uva.nl}

\author{James E. West}
\address{Department of Mathematics, Cornell University, Ithaca, NY
14853-4201, USA}
\email{jwestcornell@yahoo.com}

\date{\today}

\keywords{Banach space, Fr\' echet space, Keller space, Hilbert cube, group action, Z-set, compact group, fixed point set, Shape theory,
periodic homeomorphism, acyclicity mod $p$, infinite-dimensional manifold}

\subjclass{57N20, 57S99}

\begin{abstract}
Let $A$ be the fixed point set of a semifree action of a compact (metric) group $G$  on a Keller space $X$ (an
infinite-dimensional compact convex subset of a separable Fr\' echet space).
All Keller spaces are homeomorphic and all nontrivial compact Lie groups act on all $X$'s semifreely with unique fixed points (see Section 3).  We prove (1) if $A$ is a single point, then it is the fixed point set of a semifree action of $G$ with uniformly small orbits, (2) if $A$ has Property Z (e.g.,
is of infinite codimension) in $X$, then the action of $G$ may be replaced by one with fixed point set $A$ and uniformly small orbits, and (3) if $A$ is an \ANR, then $G$ acts on $X$ semifreely with fixed point set homeomorphic to any compact (metric) space that is Shape equivalent to $A$.  Corollaries are (a) every Cell-like set (i.e., continuum of
trivial Shape) embedded in a Keller space $X$ as a Z-set is the fixed point set of semifree actions on $X$ with uniformly small orbits of all nontrivial compact Lie groups (Corollary \ref{CE}) and (b) (a partial
converse to a Theorem of P.\ A.\ Smith) if a compact (metric) space $Y$ has the Shape of a compact \ANR, then $Y$ embeds in $X$ as the fixed point set of a homeomorphism of $X$ of
period $n$ if it is acyclic in \v Cech homology with $\mathbb{Z}_{n}$ coefficients.  Finally, we (4) apply recent results of Cappell,
Weinberger, and Yan \cite{cwy} to give for a compact manifold $M$ modeled on a Keller space homological and algebraic K-theoretic conditions equivalent to the existence of semifree actions of
finite groups $G$ on $M$ with prescribed \ANR\ fixed point sets and (5) generalize (3) to prove that if
$A$ is an \ANR\ that is the fixed point set of a semifree action of $G$ on $M$, then every compact
(metric) space that is Shape equivalent to $A$ is homeomorphic to the fixed point set of a
semifree action of $G$ on $M$.
\end{abstract}

\maketitle
\section{Introduction}\label{introduction}

By \emph{space} we will always mean separable metrizable topological space.

 If $G$ is a topological group, $X$ is a topological space, $\alpha$ is a transformation group action of $G$
 on $X$, and there is a subset $Y$ of $X$ such that
 for each element $g$ in $G$ which is not the identity,

 $$    Y = \{x \in X : \alpha(g, x) = x\},
 $$
 then $\alpha$ is said to be a \emph{semifree action} of $G$ on $X$ with \emph{fixed point set}, $
 \Fix(\alpha)$, equal to $Y$.
 If $\Fix(\alpha)$ is empty, then $\alpha$ is said to be \emph{free}. If $\Fix(\alpha)$ is a single point,
 $\alpha$ is called \emph{based-free}.

The second author~\cite{West69} proved that if $X$ is a space with the so-called \emph{reflective isotopy property}, and
 if $G$ is a topological group which admits a free action on its infinite Cartesian power $X^\infty$, then every closed subset $A$ of
 $X^\infty$ is the fixed point set of some semifree action of $G$ on $X^\infty$. This theorem can be applied
to get interesting actions on infinite-dimensional linear spaces, but not on spaces which have, for example, the fixed point
property.
   We are interested here in semifree group actions on infinite-dimensional \emph{compact} convex
  subspaces of Fr\'echet spaces, which we call
{\em Keller spaces}, and in compact manifolds modeled on them.
By a theorem of Keller~\cite{keller} (for Hilbert spaces, extended in ~\cite[Proposition 3.1]{bp}), each
infinite-dimensional compact convex subspace of a Fr\'echet space is homeomorphic to
the Hilbert cube $Q=\prod_{n=1}^\infty [-1,1]_n$ with the standard product topology.  Hence, topologically,
we are dealing with just one space.  The product structure of $Q$ is very convenient, and
we work in that model.  As $Q$ is a product of intervals, it is immediately seen to be a compact Absolute
Retract (\AR) and to have the
fixed point property; moreover, by another theorem of Keller~\cite{keller} (see also Fort \cite{Fort62}),
it is \emph{topologically homogeneous}, i.e., for any
two points $x$ and $y$ there is a homeomorphism $f$ of $Q$ with $f(x)=y$. Thus, all Keller spaces have the
fixed point property, are \AR's, and are homogeneous.
A space that is homeomorphic to $Q$ is called {\em a Hilbert cube}.

We will now describe a few illuminating examples that will motivate the rest of this paper.
Let $\sigma_Q: Q\to Q$ be the linear involution $x\mapsto -x$. Our first observation is that if $X$ is a
compact \AR, then it is the
fixed point set of the involution $\sigma_Q\times \identity_X : Q\times X \to Q\times X$, which is a
Hilbert cube by Edwards' Theorem (see Section \ref{infdim}). By the same
argument, using the product action, if $X$ is the fixed point set of some involution on some compact \AR, then it is the fixed point set
of involutions on all Keller spaces.

An analogous argument
replacing $\sigma_Q$ by a product of rotations of order $n$ and $X$ by the
join $\mathbb Z_n\ast P$ with $\mathbb Z_n$ acting on itself shows that if $P$ is a noncontractible
finite simplicial complex that is acyclic in integral homology ({\it i.e.,} $H_i(P)=0 $ for
$i > 0$ and $H_0(P) =\mathbb Z$) then $P$ is an example of a
noncontractible Absolute Neighborhood Retract (\ANR) which is the fixed point set of semifree actions of
homeomorphisms of all periods on Keller spaces.  (The suspension $\Sigma P$ is contractible \cite[Section 4.2, Exercise 8]
{Hatcher01},
hence an \AR,
and by induction on $n$, so is $\mathbb{Z}_n\ast P = \mathbb{Z}_n\times [0,1]\times P/\sim$ where the equivalence relation $\sim$ is defined by
$(k,0,x_1)\sim (k,0,x_2)$ and $(k_1,1,x)\sim (k_2,1,x)$.)
That such a space $P$ exists is
well-known and follows from work of Poincar\'e~\cite{SeifertThrelfall}. For a nice ``minimal'' construction, see Lutz
and Ziegler~\cite{LutzZiegler}.  The diagonal $\mathbb{Z}_n$-action on $(\mathbb{Z}_n\ast P)^{\infty}$, which is a Hilbert cube by \cite[Theorem 6.2]{West70}, shows that the countably infinite product $X=P^\infty$ is a fixed point set as well.
It has rather ``bad'' topological properties: it is not locally contractible, it does not have trivial
Shape (= pro-homotopy type), etc.

This example generalizes to all compact groups $G$ that act based-freely on $Q$, in particular, to all
nontrivial compact Lie groups (Theorem \ref{standard action}), and all acyclic compact \ANR's (Theorem \ref{acyclicANR}).

The conclusion is that finding necessary and sufficient topological conditions for a
space to be a fixed point
set of semifree compact group actions on Keller spaces promises to be rather difficult.

Fortunately, Smith~\cite[Theorem III]{Smith41} proved that
if $X$ is compact and acyclic in reduced \v Cech  homology with $\mathbb Z_p$, $p$ prime, and $h$ is a homeomorphism of $X$ of period
$p$, then the set of fixed points of $h$ is $\mathbb Z_p$-acyclic as well (in particular, it is a continuum).

Jones~\cite{Jones71} gave the following as a converse to Smith's Theorem: if $K$ is a finite, connected
CW-complex and for some $n > 1 \,\,\tilde H_i(K;\mathbb{Z}_n)=0$ for all $i\ge 0$, then there is a contractible, finite CW-complex $L$
containing $K$ (as a subcomplex) and a semifree action of $ \mathbb{Z}_n$ on $L$ by cellular maps with
fixed point set $K$.  Assadi and Vogel \cite{av} considered replacing $L$ by a manifold and introduced an
algebraic K-theoretic obstruction group which has been recently exploited by Cappell,
Weinberger, and Yan \cite{cwy}  to give necessary and sufficient conditions for all finite groups $G$ that for a finite cell complex $Y$ with a semifree $G$ action and fixed point set $F$, a map $f:A\to F$
where $A$ is a finite cell complex extends to an invariant homotopy equivalence (or simple homotopy
equivalence) $\bar f:X\to Y$ where $X$ is a finite cell complex with a semifree cellular
$G$ action with fixed point set $A$. They also give a generalization to actions on compact \ANR's.

 The first aim of this paper is to show that based-free actions of compact groups on Keller spaces can be
 significantly ``improved''.

 The second aim of this paper is to apply these improved actions in the setting of more general fixed point sets
 in Keller spaces.
Even for fixed point sets of periodic
 homeomorphisms the situation is complicated.  For example, R.\ D.\ Anderson's conjecture that all based-free
 involutions of Keller spaces are conjugate is still open after 60 years.  (See~\cite{Wong74, BersteinWest78,
 WestWong79, Antonyan99, millwest19} for more information about the validity of
the conjecture.)  Floyd's examples  in \cite{Floyd56} of a period 6 simplicial
homeomorphism of a contractible finite simplicial complex whose fixed point set has two components,
a 2-sphere and an arc, and another period 6 homeomorphism on a homology 2-sphere with a fixed point
set that is not locally
connected show that the passage from semifree actions to more general ones is more complicated than might be expected.  By the method outlined above, we conclude that every Keller space has period
6 homeomorphisms with disconnected fixed point sets and with non locally connected fixed point sets.  Therefore we restrict attention to the semifree case.

In this part of the paper we show that if $X$ is Cell-like, every nontrivial compact Lie group acts semifreely on each Keller space with  fixed point set homeomorphic to $X$ and build on Jones' result to give a converse in Keller spaces to P.A. Smith's Theorem for spaces that are Shape equivalent to \ANR's.

Our third aim is to study semifree actions on compact spaces $M$ modeled on (i.e., locally homeomorphic to) Keller spaces.  These are
called Hilbert cube manifolds ($Q$-manifolds) in the literature and we shift to that notation here.
In Section~6, we show how the K-theoretic invariants identified by Cappell, Weinberger, and Yan
in \cite{cwy} apply to $Q$-manifolds.  This allows us to apply their results about finite
group actions on cell complexes and \ANR's to generalize many of our results on Keller spaces
to compact $Q$-manifolds.

\subsection{Some Infinite-Dimensional topology of Keller spaces}\label{infdim}
The group of all self-homeomorphisms
of~$Q$ with the compact-open topology is homeomorphic to separable (infinite-dimensional) Hilbert
space  and the homeomorphism groups of compact $Q$-manifolds are Hilbert manifolds  \cite{Ferry77, Torunczyk77}, so this is true of all Keller spaces as well.
There are many interesting spaces
besides Keller spaces that are Hilbert cubes.  Examples include (a) $X\times Q$ for $X$ any compact,
contractible simplicial complex \cite{West70} or dendron (R. D. Anderson (unpublished), see~\cite{West70}), or, indeed,
any compact \AR\ (Edwards' Theorem (unpublished), which states that if $X$ is an \ANR, then $X\times Q$ is a $Q$-manifold, see  \cite[Theorem 44.1]{chapman:lectures}, \cite[Theorem 7.8.1]{vm:book}, \cite{Tor:hilbertcube}),
and (b) the space of closed nonvoid subsets of any non-degenerate Peano continuum \cite{schori:west2},
\cite{cusc74}.  Since a countably infinite product of nondegenerate Hilbert cube factors is a Hilbert cube \cite{West70}, any countably infinite product of nondegenerate compact \AR's is a Hilbert cube.

Many locally compact spaces of functions are locally homeomorphic to the Hilbert cube.
Provocative examples of spaces of maps between Riemannian manifolds that are $Q$-manifolds have been given by Geoghegan \cite{geo} and Colvin \cite{col}.  For example, if $\alpha > 0$, the $C^0$ closure of the space of $C^1$ loops with derivative norm bounded by $\alpha$ in a  manifold that is flat or of constant negative sectional curvature is a Hilbert cube manifold.  An elegant topological characterization
of Hilbert cube manifolds has been given by Toru\'nczyk
(\cite{Tor:hilbertcube}, \cite[7.8.3]{vm:book}, \cite{LevinDijkstravanMill2016}); they are the locally compact \ANR's such that any two maps of $Q$ into them can be approximated by maps with disjoint images.

Of crucial importance in Infinite-Dimensional topology is the so-called \emph{(Estimated) Homeomorphism Extension Theorem.} The first version of it is due to Anderson~\cite{topinfdef}. It was strengthened by various authors to Theorem~19.4 in Chapman~\cite{chapman:lectures}: \emph{Let $M$ be a $Q$-manifold, $A$ be locally compact and let $f: A\times [0,1]\to M$ be a proper map such that $F_0$ and $F_1$ are Z-embeddings. Then there exists an isotopy $H: M\times [0,1]\to M$ such that $H_0=\identity$ and $H_1\circ F_0=F_1$. Moreover if $F$ is limited by an open cover $\mathcal{U}$ of $M$, then we may choose $H$ to also be limited by $\mathcal{U}$.} This fundamental result will be used several times by us in the remaining part of this paper.
(A homotopy $H:M\times [0,1]\to M$ is {\em limited by $\mathcal U$} provided that for each $x\in M$ there is a $U\in \mathcal U$ containing $H(\{x\}\times [0,1]$.)

\subsection{More on the organization of the paper}
In Section 3 we show in Theorem~\ref{derdestelling} that if a nontrivial compact group
 $G$ acts based-freely on the Hilbert cube then there is a based-free action of $G$ on $Q$ arbitrarily close to the trivial action, and in
 Theorem~\ref{TheoremOnIsotopies} that sufficiently close such $G$-actions are isotopic via ``small''
 isotopies through based-free actions.

In Theorem~\ref{derdestellingPartTwo} we show that every semifree action of a compact group
 $G$ on a Keller space whose fixed point set is a Z-set can be replaced by one having the same fixed point set and
 which is arbitrarily close to the trivial action.  (See Section \ref{preliminaries} for the definition of
 Z-set.)

In Section 5 we show in Corollary \ref{CE} that every nontrivial compact Lie group acts semifreely on every
Keller space $K$ with fixed point sets homeomorphic to all continua of trivial Shape, i.e., Cell-like sets.  In Corollary
\ref{shapeanr:p-acyclic} we show
that if $X$ has the Shape of a compact \ANR\ (e.g., is $1$-UV and finite dimensional
with finitely generated integral \v Cech cohomology \cite{geolach}) then $X$ is the fixed point set of a periodic homeomorphism of $K$ with period
$n > 1$ if and only if it is acyclic in reduced \v Cech homology with $\mathbb Z_n$
coefficients.  (The ``only if'' direction for $n$ prime is due to Smith as noted above.)  These actions may be taken arbitrarily close to the identity.

Theorem \ref{QM:simple} provides necessary and sufficient conditions that a $Q$-submanifold
Z-set $A$ of a compact, connected $Q$-manifold $M$ be homeomorphic to the fixed point set of a semifree action of a finite group induced from a cellular action on a cell complex.

We conclude in Section 7 by combining the results of Sections 5 and 6.
Theorems \ref{shapeobstruction} and \ref{semitame obstruction} show that for a finite group $G$ and compact, connected $Q$-manifold $M$ with fundamental group $\pi$, if $X$ is a Z-set in $M$ with the Shape of a compact \ANR\ $A$
and the inclusion $f$ of $X$ into
$M$ induces isomorphisms of the suitably defined \v Cech homology groups  with (local) coefficients in $\mathbb{F}_p\pi$ for all primes
$p$ dividing the order of $G$, then there is a well-defined obstruction in $\tilde K_0^{top}
(\mathbb{Z} [\pi] \subset \mathbb{Z} [\pi \times G])$ the vanishing of
which implies that there is an embedding of $A$ in $M$ satisfying

\begin{enumerate}

\item there is a compact $Q$-manifold $N$ containing $A$ as a Z-set,

\item there is a semifree action $\alpha$ of $G$ on $N$ with fixed point set $A$,

\item  there is an invariant homotopy equivalence $g:N\to M$ that is the identity on $A$, and

\item if the Whitehead torsion of $g$ is in the subgroup $f_{\ast}(Wh(\check \pi_1(X,x_0)))$ of $Wh(\pi)$, where $\check \pi_1(X,x_0)$ is the inverse limit of fundamental groups of neighborhoods of $X$, then $X$ is the fixed point set of a semifree action of $G$ on $M$.
\end{enumerate}

\newpage
\tableofcontents

\newpage
\section{Preliminaries}\label{preliminaries}

All topological groups we deal with are compact. For a set $X$, $\identity_X: X\to X$ denotes the identity
function, ``$\approx$" means either ``is a homeomorphism" or ``is homeomorphic to", and ``$\simeq$" means
either ``is a homotopy equivalence" or ``is homotopic to".  We use ``deformation retraction" to mean a retraction,
$r$ that is homotopic to the identity by a homotopy that is stationary on the image of $r$,
 $\ANR$ and $
\AR$ to denote Absolute Neighborhood Retract and Absolute Retract in the Category of all
(compact metric) spaces,
and $G$-\ANR\ and $G$-\AR\ to denote Equivariant Absolute Neighborhood Retract and Equivariant Absolute
Retract in the Category of all spaces with actions of a particular group $G$ and equivariant maps.

For
a discussion of the particular complexities of these latter two notions see \cite{Antonyan17}, and
\cite{Antonyan22}.  A compact space is {\em CE} (Cell-like or Cellullar Equivalent) if it deforms to a point in each of its neighborhoods when it is embedded in an \ANR.  This is easily seen to coincide with the property of having the Shape of a point (see Section 7).  A map is CE or Cell-like provided that it is surjective and each point inverse is Cell-like (see \cite{lacher}).

The monographs \cite{chapman:lectures}, \cite{vm:book}, see also \cite{Sakai13} and \cite{Sakai20}, which includes some
infinite-dimensional results from \cite{chapman:lectures} and \cite{vm:book}, each form a good background for the infinite dimensional topology used in this paper.

A fundamental concept to our discussion is Anderson's notion of a Z-set. In our context where $X\in \ANR$
is compact, the simplest definition is that a closed set $A$ in $X$
is a \emph{Z-set in $X$} provided that every continuous function from $Q$ to $X$ can be
approximated arbitrarily closely by continuous functions from $Q$ to $X\setminus A$. That is, for every
continuous function $f: Q\to X$ and $\varepsilon > 0$ there is a continuous function $g :  Q\to X\setminus
A$ such that for every $q\in Q$, the points $f(q)$ and $g(q)$ are $\varepsilon$-close.
This concept arose as a topological characterization of closed subsets of Keller spaces $X$ that can be moved
into infinite codimensional subspaces by homeomorphisms of $X$ \cite{topinfdef}.

Set $\mathbb{I} = [0,1]$, $\mathbb{J} = [-1,1]$ and
$Q= \prod_{i=1}^\infty \mathbb{J}_i$ with the standard product topology. The metric $d(x,y) =
\sum_{i=1}^\infty 2^{-i-1}|x_i-y_i|$ generates its topology.  On $Q^\infty$, the countably infinite product of copies of $Q$, we use the metric
$\varrho(x,y) = \sum_{n=1}^\infty 2^{-n} d(x_n,y_n)$.

For $n \ge 0$, denote $\prod_{i> n}\mathbb{J}_i$ by $Q_n$, hence $Q$ is $Q_0$. A point $x\in Q_n$ is
represented by its coordinates $(x_{n+1},x_{n+2}, \dots)$. On $Q_n$ we use the standard metric $d(x,y) =
\sum_{i=n+1}^\infty 2^{-i-1} |x_i-y_i|$.
For $n \ge 1$, we can factor $Q$ as $\mathbb{J}^n \times Q_n$. For $x,y\in Q$ and $n \ge 0$, $x\equiv_n y$ abbreviates that for all $1\le i \le n$, $x_i = y_i$. Hence if
$n = 0$, there is no condition and so all points in $Q$ are $\equiv_0$ ``equivalent''.

If $X$ is a product of spaces each containing 0, then $\overline 0$ denotes the point all of whose
coordinates are 0.

If $(X,d)$ is a space, and $f :  X\to X$, then its \emph{displacement}, $\disp(f)$, is defined as
$\sup_{x\in X} d(x,f(x))$. Hence $\disp(f)$ is the distance from $f$ to the identity function on $X$.

For an action $\alpha :  G\times X\to X, \alpha^g(x) = \alpha(g,x)$.
 The \emph{displacement}, $\disp(\alpha) = \max \{ \disp(\alpha^g): g\in G\}$ is the distance between $
 \alpha$ and the trivial
 action. If $A = \Fix(\alpha)$, and $f :  X\to X$ is a homeomorphism,
 let $\check f = \identity_G\times f$, and set $\beta = f\circ \alpha \circ \check f^{-1}$. Then $\beta :  G\times X\to X$ is an action, and $\Fix(\beta) = f(A)$.

Let $X$ be a compact space. The \emph{cone $C(X)$ over $X$} is the space $X\times \mathbb{I}/X\times \{1\}$
endowed with the quotient topology. The vertex of the cone will always be denoted by $\star$.  In the remaining part of this paper, we will need a few times that the cone over a compact \ANR\ is an \AR, \cite[5.4.2]{vm:book}.

Since $Q$ is a Keller space, so is $C(Q)$, which is thus homeomorphic to $Q$ (cf. \cite[12.2]
{chapman:lectures}). Observe that $C(X)$ is the one-point compactification of $X\times [0,1)$. For us, it will
be geometrically convenient to think of $C(X)$ as $X\times [0,1]$, endowed with the following metric
$$
    \varrho((x,s),(y,t)) = (1-\max\{s,t\}) d(x,y) + |t{-}s|,
$$
where $d$ is any metric compatible with the topology on $X$ such that $d \le 2$, \cite{metriccone}. Observe
that any two points of $X\times \{1\}$ have $\varrho$-distance 0, and that on $X\times \{0\}$, $d$ and
$\varrho$ agree. If we think of $C(X)$ in that way,
we use the notation $C(X) = X\times _c [0,1]$.

\section{Based-free actions on the Hilbert cube}

\subsection{Existence of Based-Free Actions}\label{existence}

The next result was observed with an easy proof, repeated here for the reader, in \cite{induced}.  The question of Antonyan \cite{Antonyan99} whether a compact group acting based-freely on a Keller space must be a Lie group has remained open for over a quarter century.  We conclude this subsection with an argument that allowing semifree actions on compact $Q$-manifolds does not enlarge the set of groups under consideration.

\begin{theorem}\label{standard action}  Let $G$ be a nontrivial compact Lie group.  Then there is a based-free $G$-\AR\ action of $G$ on each Keller space.
\end{theorem}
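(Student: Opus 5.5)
Since all Keller spaces are homeomorphic to $Q$, and conjugating an action by a homeomorphism preserves based-freeness and the $G$-\AR\ property, it suffices to build one based-free $G$-\AR\ action on one Hilbert cube. The construction uses a faithful orthogonal representation and a cone.

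First, by Peter--Weyl a compact Lie group $G$ embeds in some orthogonal group $O(n)$, so $G$ acts linearly and isometrically on $\mathbb{R}^n$ with $\{g\} $ acting nontrivially for $g\neq e$. Let $X=G\times_c[0,1]$ be the cone on $G$, with $G$ acting by left translation on the base and fixing $[0,1]$-coordinates. The action on $C(G)$ is semifree with the vertex $\star$ as its unique fixed point, since left translation on $G$ is free. Because $G$ is a compact Lie group, hence a compact \ANR, $C(G)$ is a compact \AR. Equivariantly, $C(G)$ is a $G$-\AR\ by standard results for compact Lie group actions (Antonyan \cite{Antonyan17}): the cone over a $G$-\ANR\ with a free base is a $G$-\AR, or alternatively $G$ itself is a $G$-\ANR\ (orbit of a linear action), and cones preserve this.

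Next, pass to the infinite product $Y=C(G)^\infty$ with the diagonal action. This is a Hilbert cube because a countably infinite product of nondegenerate compact \AR's is a Hilbert cube \cite{West70}. The diagonal action is based-free. A point $(x_k)$ is fixed by some $g\neq e$ only if every $x_k$ is fixed by $g$, and each $x_k$ then equals $\star$. Conversely, the point $(\star,\star,\dots)$ is fixed by all of $G$. So $\Fix=\{\bar\star\}$ for every $g\ne e$. A countable product of $G$-\AR's is a $G$-\AR, because equivariant extension can be done coordinatewise. Finally, transport the action to any Keller space $K$ via a homeomorphism $f\colon Y\to K$, giving $\beta=f\circ\alpha\circ\check f^{-1}$ as in Section~\ref{preliminaries}.

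The main obstacle is verifying the $G$-\AR\ property rigorously rather than just the nonequivariant \AR\ property. The plan is to invoke the theorem that, for compact Lie $G$, a metrizable $G$-space is a $G$-\AR\ if and only if it is an \AR\ and each fixed-point set $X^H$ (for closed $H\le G$) is an \AR. For $Y$ these sets are the single point $\bar\star$ when $H\ne e$, and $Y$ itself when $H=e$, so both are \AR's. If that criterion is unavailable in the required generality, the fallback is to embed $C(G)$ equivariantly as a closed invariant subset of the unit ball of the representation $\mathbb{R}\oplus\mathbb{R}^n$. One then checks directly that an invariant retraction exists, by averaging a retraction over Haar measure within a convex equivariant neighborhood and composing with radial structure.
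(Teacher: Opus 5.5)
Your construction is the paper's: left translation on the base of $C(G)$, the diagonal action on $C(G)^\infty$, and \cite{West70} to see that the product is a Hilbert cube. The paper instead triangulates $G$ and uses West's theorem for contractible polyhedra, where you use Edwards' theorem; this difference is inessential. The paper treats the equivariant retract property as ``immediate from the cone structure,'' and your second and third paragraphs already give exactly that argument:
\begin{itemize}
\item $G$ with left translation is a compact $G$-\ANR\ (for compact Lie $G$, orbits are equivariant neighborhood extensors);
\item the cone over a compact $G$-\ANR\ is a $G$-\AR;
\item a countable product of $G$-\AR's is a $G$-\AR, because, viewed as equivariant absolute extensors, they can be extended coordinatewise.
\end{itemize}
That argument suffices, and the Peter--Weyl representation is not needed for it.

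You should discard the ``plan'' in your final paragraph, because it would not work.
\begin{itemize}
\item The criterion you cite (an \AR\ all of whose fixed sets $X^H$ are \AR's is a $G$-\AR) is Jaworowski's theorem only for finite-dimensional spaces with finitely many orbit types. Your $Y=C(G)^\infty$ is infinite-dimensional.
\item In that generality the criterion is not available. Applied to an arbitrary based-free involution of $Q$, it would make every such involution a $\mathbb{Z}_2$-\AR\ action. By the West--Wong equivalence quoted in Section~\ref{manifolds} \cite{WestWong79}, that would settle Anderson's conjecture, which is still open.
\item Applying the finite-dimensional criterion to $C(G)$ itself, and then taking products, is legitimate.
\end{itemize}

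The fallback is also flawed, for two reasons.
\begin{itemize}
\item A faithful representation $\mathbb{R}^n$ need not contain a free orbit; $SO(3)$ acting on $\mathbb{R}^3$ has none. So $C(G)$ with left translation need not embed equivariantly in $\mathbb{R}\oplus\mathbb{R}^n$. You would need something like $M_n(\mathbb{R})$ with left multiplication, where the orbit of the identity matrix is $G$.
\item Haar-averaging a retraction, $v\mapsto\int_G g^{-1}r(gv)\,dg$, gives an equivariant map that fixes $C(G)$. Its values, however, lie only in the convex hull of $C(G)$, so it is not a retraction onto the nonconvex set $C(G)$.
\end{itemize}
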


\begin{proof}  Since $G$ is a smooth manifold, it is homeomorphic to a finite
simplicial complex \cite{whitehead}.  The cone $C(G)$ is then homeomorphic to a finite contractible simplicial
complex.  Set $Q(G) = C(G)^{\infty}$.  By \cite{West70} $C(G)$ is a Hilbert cube factor and $Q(G)$ is a Hilbert
cube.  Let $G$ act by left translation on $C(G)$ and
$\alpha$ be the diagonal action on $Q(G)$.  It is immediate from the cone structure that $Q(G)$ is a $G$-\ANR.  \end{proof}

We term this the {\em standard based-free action} of $G$ on a Hilbert cube.

Theorem \ref{point} shows that if a compact group acts semifreely and nontrivially on a compact $Q$
manifold, then even if the fixed point set is not an \ANR, $G$ acts based-freely on $Q$.

\begin{theorem}\label{point} Let $G$ be a nontrivial compact group.
If there is a nontrivial semifree $G$-action $\alpha$ of $G$
on a compact
Q-manifold then $G$
acts based-freely on all Keller spaces with arbitrary fixed points; if $\alpha$ is a $G$-\ANR\ action, then $G$
acts based-freely on all Keller spaces with $G$-\AR\ actions.
\end{theorem}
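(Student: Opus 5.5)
Given a nontrivial semifree action $\alpha$ of $G$ on a compact $Q$-manifold $M$ with fixed point set $A=\Fix(\alpha)$, the plan is to produce a based-free action on a Hilbert cube by coning. First, since $\alpha$ is nontrivial and semifree, $A\neq M$. Second, $G$ acts freely on $M\setminus A$: for $g\neq e$ the fixed set of $\alpha^g$ is exactly $A$.

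I would pass to an invariant closed subset on which $G$ acts freely and then cone it. Choose a point $x\in M\setminus A$ and let $K=\alpha(G\times\{x\})\approx G$ be its orbit, which is free. The natural first attempt is to let $G$ act on the cone $C(M\setminus U)$ for a small invariant open neighborhood $U$ of $A$, but $M\setminus U$ need not be a $Q$-manifold. So instead I would use the construction of Theorem \ref{standard action}. Let $G$ act on $C(G)$ by left translation on the base and trivially on the cone coordinate, so that $\star$ is the unique fixed point. Then take the diagonal action on $C(G)^\infty$, which is based-free with fixed point $(\star,\star,\dots)$.

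Here is the obstacle: for a non-Lie $G$, $C(G)$ is not known to be a Hilbert cube factor. Although $C(G)$ is always an \AR\ compactum once $G$ is an ANR, a general compact group need not be an ANR. This is exactly where the hypothesis on $M$ enters. Replace $C(G)$ by $C(M)$, with $G$ acting by $\alpha$ on the base and fixing the cone coordinate. The fixed point set of $g\neq e$ is then $C(A)$ (cone lines over $A$ plus the vertex). That is not yet a single point, so I would instead use $Y=C(M)/C(A)$, or equivalently the cone over the pair. It is cleaner to take the quotient $M/A$ with the induced action; this is semifree with the single fixed point $[A]$. Then cone and take $Y=C(M/A)$, where the fixed point set of $g\neq e$ is the cone line over $[A]$. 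Since that is an arc, pass to $C(M/A)/(\{[A]\}\times\mathbb I)$, which has a single fixed point. Each of these spaces is compact and acyclic-contractible (a cone collapsed along a contractible sub-cone).

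Finally, form the countable product $Y^\infty$ with the diagonal action. Its fixed point set for $g\neq e$ is the single point all of whose coordinates are the fixed point of $Y$. I expect the main obstacle to be verifying that $Y^\infty$ is a Hilbert cube, i.e.\ that $Y$ is a nondegenerate compact \AR, so that \cite{West70} applies. I would try to avoid proving that $M/A$ is an ANR when $A$ is wild by first multiplying by $Q$ and using the Z-set structure. Failing that, I would replace $M/A$ by the mapping-cone construction $M\cup C(A)$, which is an ANR whenever $A$ is, and handle the general case by the product trick. Once $Y^\infty\approx Q$ is established, homogeneity of $Q$ and conjugation by homeomorphisms as in Section \ref{preliminaries} move the fixed point to any prescribed point of any Keller space. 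If $\alpha$ is a $G$-\ANR\ action, equivariant cones and countable products of $G$-\AR's remain $G$-\AR's, which gives the $G$-\AR\ clause.
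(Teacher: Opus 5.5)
Your space is the right one: $C(M/A)/(\{[A]\}\times\mathbb I)$ is just $C(M)/C(A)=M\times\mathbb I/\bigl((A\times\mathbb I)\cup(M\times\{1\})\bigr)$. This is exactly the space used in the paper, and the diagonal action on its countable power is the paper's action. But you stop at the one step with real content, namely that $Y$ is a compact \AR, and none of your fallbacks closes it.

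\begin{itemize}
\item Compactness and contractibility do not give \AR.
\item The mapping cone $M\cup C(A)$ is an \ANR\ only when $A$ is, since it contains $A\times(0,1)$ as an open set. Here $A=\Fix(\alpha)$ is an arbitrary closed invariant set, and in any case the fixed set of that space would be $C(A)$, not a point.
\item Crossing with $Q$ or arranging $A$ to be a Z-set does not help. A convergent sequence $A\subset Q$ is a Z-set, yet $\check H^1(Q/A)$ is free abelian of infinite rank. So $Q/A$ is not an \ANR, and then $C(Q/A)$ is not an \AR\ either, because $Q/A$ is a retract of the open set $C(Q/A)\setminus\{\star\}$.
\end{itemize}

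So the detour through $M/A$ leads away from the answer. The \AR\ property only appears after the arc over $[A]$ is collapsed, and it has to be proved there.

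The missing idea is to find $Y$ inside $C(M)$, which is an \AR\ because $M$ is a compact \ANR.
\begin{itemize}
\item Choose $\phi:M\to\mathbb I$ with $\phi^{-1}(1)=A$, and make it $G$-invariant by composing a Urysohn function on $M/G$ with the orbit map.
\item Then $r(x,t)=(x,\max\{t,\phi(x)\})$ is a retraction of $C(M)$ onto $R=\{(x,s):s\ge\phi(x)\}$.
\item The map $(x,t)\mapsto(x,\phi(x)+t(1-\phi(x)))$ induces a homeomorphism of $Y$ onto $R$ carrying the collapsed set to the vertex.
\end{itemize}
Hence $Y$ is a nondegenerate compact \AR\ for every closed $A$, whatever its local topology. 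Edwards' theorem then gives $Y\times Q\approx Q$, and West's theorem on countable products of Hilbert cube factors gives $Y^\infty\approx Q$.

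Because $\phi$ is invariant, $r$ and the homeomorphism are equivariant, and this is what gives the $G$-\AR\ clause. Your appeal to ``equivariant cones are $G$-\AR's'' does not apply directly, since $Y$ is a quotient of an equivariant cone, not itself a cone. Finally, treat $A=\emptyset$ separately, using $C(M)$ with its vertex as the fixed point, since $M/A$ and $[A]$ make no sense there.
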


\begin{proof}
Let $\alpha$ denote a semifree action of $G$ on a compact $Q$-manifold $M$ with
fixed point set $A \neq M$.  If $A =\emptyset$ then the induced action on the cone $C(M)$ of $M$ is a based-free action on an \AR\ so as observed below the diagonal action on the Hilbert cube $C(M)^{\infty}$ is based-free.
If $A \neq \emptyset$ then the product action $\beta = \alpha \times {\identity_\mathbb{I}}
$ of $G$ on $M\times \mathbb{I}$ has fixed point
set $A \times \mathbb{I}$.  Therefore it induces a based-free action $\gamma$ on the quotient
space
$X = M \times \mathbb{I} /((A\times \mathbb{I}) \cup (M \times \{1\}))$.
 To see that $X$ is an \AR\ we first collapse $M\times \{1\}$ to a single point $v$, thus obtaining
 the cone $C(M)$. This cone is an \AR\ by \cite[5.4.2]{vm:book}, \cite[6.5.7]{Sakai13}.  Let
 $k:C(M)\to X$ collapse $C(A)$ to the image $\ast$ of
 $(A\times \mathbb{I}) \cup (M \times \{1\})$.  For a map $\phi:M\to \mathbb{I}$ such that
 $\phi^{-1}(1) = A$, the function $k(x,t) = (x,s)$ where $s = \min\{t,\phi(x)\}$ embeds $X$ in $C(M)$ as a retract so it is an \AR.

By Edwards' Theorem (see Section \ref{infdim}), $X\times Q \approx Q$, so by \cite[Theorem 6.2]{West70}, $X^{\infty}\approx Q$.

The diagonal action of $\gamma$ is based-free.  Since all Keller spaces are homeomorphic to
$Q$ and homogeneous, the result follows.
If $\alpha$ is a G-\ANR\ action, so is $\beta$ and then $\gamma$ is a $G$-\AR\ action (by the
same argument given above for $X$ and the trivial action) and so is its diagonal action.
\end{proof}

\subsection{Small actions}\label{smallactions}

We now show that the actions in Theorem~\ref{point} can be ``improved'' considerably.

\begin{theorem}[small actions]\label{derdestelling}
If the compact group $G$ acts based-freely on $Q$, there is a based-free action of $G$ on $Q$ arbitrarily close
to the trivial action.
\end{theorem}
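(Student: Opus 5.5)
Let $\alpha$ be a based-free action of $G$ on $Q$, and by homogeneity of $Q$ (conjugating by a homeomorphism moving the fixed point) assume $\Fix(\alpha)=\{\overline 0\}$. Given $\varepsilon>0$, the plan is to build a new Hilbert cube out of $Q$ and a product factor, act by $\alpha$ only in coordinates that carry little weight in the metric, and act trivially elsewhere, while keeping the fixed point set a single point. The natural first attempt, $\identity_{\mathbb{J}^n}\times\alpha$ on $\mathbb{J}^n\times Q\approx Q$, has small displacement but fixed point set $\mathbb{J}^n\times\{\overline 0\}$, which is too large. So the first $n$ coordinates must be "coned off" toward the fixed point.

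The construction will use the cone. Choose $n$ with $\sum_{i>n}2^{-i-1}\cdot 2<\varepsilon$, so that any homeomorphism of $Q=\mathbb{J}^n\times Q_n$ preserving the first $n$ coordinates has displacement $<\varepsilon$. Consider $Y=\mathbb{J}^n\times C(Q)$ with $C(Q)=Q\times_c[0,1]$, and let $G$ act by $\identity\times C(\alpha)$, where $C(\alpha)$ is $\alpha$ on each level $Q\times\{t\}$. Its fixed set is $\mathbb{J}^n\times(\{\overline0\}\times_c[0,1])$, an $(n+1)$-cell $D$, and this is still too big. Instead I would use a subspace/quotient in which the level parameter $t$ is tied to the $\mathbb{J}^n$ coordinate. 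Choose a map $\phi:\mathbb{J}^n\to[0,1]$ with $\phi^{-1}(0)=\{\overline0\}$. In $\mathbb{J}^n\times Q$, collapse, over each $s\in\mathbb{J}^n$, the fibre via the cone: let $Z=\{(s,[q,t])\in\mathbb{J}^n\times C(Q): t\ge 1-\phi(s)\}$, and then collapse the arc of fixed points. A cleaner formulation, following the proof of Theorem~\ref{point}, is to take the product action $\identity_{\mathbb{J}^n}\times\alpha$ on $\mathbb{J}^n\times Q$, whose fixed set is $\mathbb{J}^n\times\{\overline0\}$, and pass to the quotient $X=(\mathbb{J}^n\times Q)/(\mathbb{J}^n\times\{\overline0\})$. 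Then $X$ is an \AR\ by the retract argument used for Theorem~\ref{point}, the induced action is based-free, and orbits lie in the fibres $\{s\}\times Q$. Hence, in the quotient metric obtained from $d$, where points in a common fibre have distance at most $\sum_{i>n}2^{-i}$ measured only by the last coordinates, the orbits have diameter $<\varepsilon$.

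The remaining task, and the main obstacle, is to see that $X$ (or $X\times Q$ with the diagonal action times the identity, or a suitable product) is a Hilbert cube \emph{with respect to a metric in which orbits are still small}. Since every Keller space is homeomorphic to $Q$, closeness to the trivial action only means something after an identification $h:X\to Q$. So I must control $h$ so that it nearly preserves the relevant distances. My plan is to use Edwards' theorem to get $X\times Q\approx Q$, with the action $\gamma\times\identity_Q$ whose orbits lie in sets $\{x\}\times Q$-fibres of small $X$-diameter. Then I would obtain the homeomorphism to $Q$ as a limit of maps close to the projection structure, using Toru\'nczyk's characterization or the Estimated Homeomorphism Extension Theorem: the collapse $\mathbb{J}^n\times Q\to X$ is a CE map between \AR's, so after multiplying by $Q$ it is a near-homeomorphism. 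Composing $\gamma\times\identity$ with approximating homeomorphisms $h_k$ of $\mathbb{J}^n\times Q\times Q\approx Q$ then yields an action $h^{-1}\circ(\gamma\times\identity)\circ h$ on $Q$ whose orbits are images of small sets under a map uniformly close to the CE map. Those orbits have displacement $<\varepsilon$ plus the approximation error, and the result follows.
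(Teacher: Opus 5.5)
\textbf{There is a genuine gap in your final step.} Your opening move matches the paper's starting point: act by $\identity_{\mathbb{J}^n}\times\alpha$ on $\mathbb{J}^n\times Q_n$, so all orbits are tiny but the fixed set is the $n$-cell $A=\mathbb{J}^n\times\{\overline 0\}$. The gap is in your last paragraph, which is where the whole difficulty of the theorem lies.

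Let $h$ be a homeomorphism close to the collapse $\kappa$, with or without the extra $Q$ factor. The orbits of $h^{-1}\circ\gamma\circ h$ are images of small sets under $h^{-1}$, not under a map close to $\kappa$. And $h^{-1}$ is close to nothing single-valued: it carries the tiny set $h(A)$ near $p=\kappa(A)$ onto $A$, whose diameter is nearly $1$.

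In fact your claim is false for general approximating homeomorphisms. Pick $a_1,a_2\in A$ far apart, and two points $z_1\neq z_2$ of one nontrivial $\gamma$-orbit near $p$. Let $\theta$ be a homeomorphism of $X$ supported in a small neighbourhood of $p$ with $\theta(h(a_i))=z_i$; such a $\theta$ exists by homogeneity of $Q$-manifolds. Then $\theta\circ h$ is still close to $\kappa$, but $(\theta h)^{-1}\gamma(\theta h)$ moves $a_1$ to $a_2$. So the near-homeomorphism theorem for CE maps gives no control of displacement. What is needed is a shrinking of $A$ carried out compatibly with the orbit decomposition.

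There is also a separate error: $\gamma\times\identity_Q$ has fixed set $\{p\}\times Q$, so it is not based-free. The extra factor is unnecessary anyway, since $A$ is a cell-like Z-set and $Q/A\approx Q$ directly.

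\textbf{How the paper supplies the missing step.} The paper passes to the cone $C(Q)=Q\times_c[0,1]$ with the levelwise action. It slides the fixed set up to $\hat F=F\times_c[\delta,1]$ by a small homeomorphism. The Venetian Blind Lemma then produces $\zeta:C(Q)\to C(Q)$ as a finite composition of homeomorphisms, each supported in a box on which the current conjugated action has small orbits. This is followed by a final collapse inside the tiny set $Q\times_c[\nu,1]$. The result is that $\hat F$ is the only nondegenerate fibre, and $\varrho(\zeta(x),\zeta(\beta(g,x)))<\mu+\disp(\beta)$ for all $x,g$.

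The action $\zeta\circ\beta\circ\check\zeta^{-1}$ is then small for a specific reason. Its displacement is measured by $\zeta$-images of orbits in the target, and $\zeta$ itself identifies that target with $C(Q)$.

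Your argument needs an analogue of this estimate: a CE map onto a Hilbert cube, with $A$ as its only nondegenerate fibre, that sends every orbit to a small set. Without it the proof is incomplete.
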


\begin{proof}
Our actions will be defined on $C(Q)$.

To begin with, factor $Q$ as $\mathbb{J}^n\times Q_n$. Fix a based-free action $\sigma_n$ of $G$ on
$Q_n$.  By homogeneity of $Q_n$, we may assume that $\Fix(\sigma_n) = \{\overline 0\}$.
Define an action $\alpha_n :  G\times Q\to Q$ by $\alpha_n(g,(x,y)) = (x,\sigma_n(g,y))$.
Then $\Fix(\alpha_n)$ is the Z-set  $J^n\times \{\overline 0\}$, and $\disp(\alpha_n)\le 2^{-n}$. By a shrinking argument with attention to orbit sizes,
we will shrink for sufficiently large $n$, $\Fix(\alpha_n)$ to a point. Our use of cones simplifies the technical
details of the construction.

So now let Let $0 < \varepsilon \le 1$, and fix $n_0$ with $2^{-n_0} < \frac{\varepsilon}{4}$. Consider the cone $C(Q)=Q\times_c [0,1]$. Define $\hat
\alpha_{n_0} :  G\times C(Q)\to C(Q)$ by $\hat\alpha_{n_0}(g,(x,t)) = (\alpha_{n_0}(g,x),t)$, and put
$F=\Fix(\alpha_{n_0})$. Then
$\disp(\hat\alpha_{n_0})=\displ(\alpha_{n_0}) <
\frac{\varepsilon}{4}$, and $\Fix(\hat\alpha_{n_0}) = F \times_c [0,1]$. Clearly, $\Fix(\hat\alpha_{n_0})$ is a
Z-set in $C(Q)$. Indeed, $F \times [0,1{-}\frac{1}{n}]$ is a Z-set in $C(Q)$ for every $n$, \cite[7.2.5]
{vm:book}, and so is the vertex of the cone, hence $\Fix(\hat\alpha_{n_0})$ is a  closed countable union of
Z-sets, thus a Z-set.

Consider the set $\hat F = F \times_c [\delta,1]$, where $0<\delta<\frac{1}{2}$ is so small that the
homeomorphism $F \times_c [0,1]\to F \times_c [\delta,1]=\hat F$ defined by $(x,t) \mapsto (x,(1{-}\delta) t{+}
\delta)$ can be extended to a homeomorphism $f$ of $C(Q)$ such that $\disp(f) < \frac{\varepsilon}{8}$,
\cite[11.1]{chapman:lectures}. As in Section \ref{preliminaries}, let $\check f = \identity_G\times f$, and set $\beta = f\circ \hat\alpha_{n_0} \circ \check f^{-1}$.
Then, $\Fix(\beta) = \hat F$, and
$$
    \disp(\beta) \le 2\disp(f) + \disp(\hat\alpha_{n_0}) < 2 \textstyle\frac{\varepsilon}{8} + \textstyle\frac{\varepsilon}{4}=\textstyle\frac{\varepsilon}{2}.
$$
Using the Venetian Blind Lemma~\ref{JimLemma03}, one may shrink $\hat F$ to the vertex of $C(Q)$ while maintaining control over the sizes of the orbits, i.e., construct a map $\zeta :  C(Q)\to C(Q)$ such that
\begin{enumerate}
\item $\zeta^{-1}(\star) = \hat F$,
\item $\hat F$ is the only non-degenerate point inverse of $\zeta$, and
\item for each $x\in C(Q)$ and $g\in G$, $\varrho(\zeta(x),\zeta(\beta(g,x)) < \frac{\varepsilon}{2} + \disp(\beta) = \frac{\varepsilon}{2} + \frac{\varepsilon}{2} = \varepsilon$.
\end{enumerate}
Then with $\check \zeta=\identity_G\times \zeta$, $\gamma = \zeta \circ \beta \circ \check \zeta^{-1}$ is a
based-free $G$-action on $C(Q)$ with unique fixed point $\star$ and displacement less than $\varepsilon$.
\end{proof}

We now proceed to the proof of the Venetian Blind Lemma. Our notation is as in the proof of Theorem~\ref{derdestelling}.

Let $\mu > 0$.

Since $\star$ is a fixed point of $\beta$, there exists $\frac{1}{2} < \nu< 1$ such that
$\diam_\varrho(Q\times_c [\nu,1]) < \frac{\mu}{8}$, while moreover, for all $x \in Q\times_c [\nu,1]$ and $g\in G$,
$\varrho(x,\beta(g,x)) < \frac{\mu}{8}$.

As warming up exercise, we first prove the following.

\begin{lemma}\label{preJimLemma03}
For every $0 < \nu < d < 1$, there is a map $h :  C(Q)\to C(Q)$ such that
\begin{enumerate}
\item $h^{-1}(\star) = F\times_c [d,1]$,
\item $F\times_c [d,1]$ is the only non-degenerate point inverse of $h$,
\item $h$ is supported on $Q\times_c [\nu,1]$, and
\item for each $x\in Q\times_c [\nu,1]$ and $g\in G$, $\varrho(h(x),h(\beta(g,x)) < \frac{\mu}{2}$.\label{devierdepre}
\end{enumerate}
\end{lemma}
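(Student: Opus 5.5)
We construct $h$ explicitly in cone coordinates as a "radial collapse" that sends $F\times_c[d,1]$ to the vertex and is the identity on $Q\times_c[0,\nu]$. Properties (3) and (4) will then follow almost for free from the choice of $\nu$.

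Fix a map $\phi:Q\to[0,1]$ with $\phi^{-1}(0)=F$, for instance $\phi(x)=\min\{1,\,d(x,F)\}$. Next choose a continuous family of maps $\lambda_x:[0,1]\to[0,1]$, depending continuously on $x\in Q$, with the following properties:
\begin{enumerate}
\item $\lambda_x(t)=t$ for $t\le\nu$;
\item each $\lambda_x$ is nondecreasing with $\lambda_x(1)=1$;
\item $\lambda_x$ is strictly increasing on $[\nu,1]$ whenever $\phi(x)>0$;
\item for $x\in F$, $\lambda_x$ maps $[\nu,d]$ homeomorphically onto $[\nu,1]$ and is constantly $1$ on $[d,1]$.
\end{enumerate}
A concrete choice is the piecewise linear function that fixes $\nu$ and $1$ and sends $d$ to $d+(1-d)(1-\phi(x))$.

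Define $h(x,t)=(x,\lambda_x(t))$ on $Q\times[0,1]$. Since $h$ preserves the level $1$, it passes to a continuous map $h:C(Q)\to C(Q)$; continuity at the vertex holds because the cone metric $\varrho$ collapses as $t\to1$ and $\lambda_x(t)\ge t$.

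We then check the four properties.
\begin{itemize}
\item[(3)] $h$ is the identity on $Q\times_c[0,\nu]$, so it is supported on $Q\times_c[\nu,1]$.
\item[(1)] $h(x,t)=\star$ exactly when $\lambda_x(t)=1$. If $\phi(x)>0$, strict monotonicity forces $t=1$. If $x\in F$, this happens exactly when $t\ge d$. So $h^{-1}(\star)=F\times_c[d,1]$.
\item[(2)] For $t<1$, $h$ preserves the $Q$-coordinate, and each $\lambda_x$ is injective on $\{t:\lambda_x(t)<1\}$. Hence every other point inverse is a singleton.
\item[(4)] $h$ maps $Q\times_c[\nu,1]$ into itself. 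Take $x\in Q\times_c[\nu,1]$. Then $\beta(g,x)$ is within $\mu/8$ of $x$, so it lies in $Q\times_c[\nu',1]$ for $\nu'$ slightly below $\nu$; it may lie just below level $\nu$. It is cleaner to avoid this by first choosing $\nu$ with some margin: pick $\nu_0<\nu$ such that both the diameter bound and the displacement bound hold on $Q\times_c[\nu_0,1]$. The lemma's hypothesis $\nu<d$ allows this, since the defining property of $\nu$ is open. With that margin, $h(x)$ and $h(\beta(g,x))$ both lie in $Q\times_c[\nu_0,1]$, which has diameter $<\mu/8<\mu/2$, and (4) follows.
\end{itemize}

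The main obstacle is making the construction genuinely continuous and correctly identifying the point inverses near $F\times\{d\}$. One must check that the interpolation in $\lambda_x$ degenerates exactly on $F$, and that $h$ is continuous at points $(x,t)$ with $x\in F$, $t\ge d$, where nearby points with $\phi>0$ are sent close to level $1$. The piecewise linear formula handles this, since $\lambda_x(t)\to1$ uniformly for $t\ge d$ as $\phi(x)\to0$. The rest is bookkeeping.
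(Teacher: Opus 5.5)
Your construction is the paper's own. In its notation $H_s(t)=(1-s)t+sf(t)$, your $\lambda_x$ is exactly $H_{1-\phi(x)}$; your $\phi$ vanishes on $F$ where the paper's equals $1$. Your checks of continuity and of (1)--(3) are fine.

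The gap is in (4). You rightly notice that $\beta(g,x)$ may fall below level $\nu$, but the repair does not work. Openness of the defining property of $\nu$ gives some $\nu_0<\nu$ with an uncontrolled margin $\nu-\nu_0$. All you know about $\beta(g,x)$ is that its level exceeds $\nu-\mu/8$, because $\varrho((x,s),(y,t))\ge|t-s|$. So the claim $h(\beta(g,x))\in Q\times_c[\nu_0,1]$ is not justified.

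No choice of $\nu_0$ rescues it. With the paper's metric, $\diam_\varrho(Q\times_c[\nu_0,1])=1-\nu_0$, so requiring this to be $<\mu/8$ forces $\nu_0>1-\mu/8>\nu-\mu/8$. The margin is therefore always smaller than the possible drop in level. (The hypothesis $\nu<d$ plays no role here either.)

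What is missing is the displacement bound used through the triangle inequality, together with the fact that $h$ fixes everything below level $\nu$; this is what the paper does.
\begin{itemize}
\item If $\beta(g,x)\in Q\times_c[\nu,1]$, then $h(x)$ and $h(\beta(g,x))$ both lie in $Q\times_c[\nu,1]$, so they are within $\mu/8$ of each other.
\item If $\beta(g,x)\notin Q\times_c[\nu,1]$, then $h(\beta(g,x))=\beta(g,x)$. Since $x,h(x)\in Q\times_c[\nu,1]$, we get $\varrho(h(x),h(\beta(g,x)))\le\varrho(h(x),x)+\varrho(x,\beta(g,x))<\mu/8+\mu/8$.
\end{itemize}
Either way the bound is below $\mu/2$. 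The paper simply bounds $\varrho(h(x),x)$, $\varrho(x,\beta(g,x))$ and $\varrho(\beta(g,x),h(\beta(g,x)))$ uniformly, which gives $3\mu/8$.
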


\begin{proof}
Let $f :  [0,1] \to [0,1]$ be the piecewise linear map that is the identity on $[0,\nu]$, sends $[\nu,d]$ onto $[\nu,1]$
and collapses $[d,1]$ to 1.
Let $H :  [0,1]\times [0,1]\to [0,1]$ be defined by $H_s(t) = (1{-}s) t + s f(t)$. Then $H_0$ is the identity,
$H_1 = f$, and each $H_s$ for $s < 1$ is a homeomorphism, being the sum of two increasing functions, one
of them strictly increasing. Let $\phi :  Q\to [0,1]$ be continuous with $\phi^{-1}(1) = F$. Now
define $h$ by $h(x,t) =
(x,H_{\phi(x)}(t))$. We clearly only need to check (\ref{devierdepre}). So, let $x\in Q\times_c [\nu,1]$. Since
$h(x) \in Q\times_c [\nu,1]$,
$\varrho(x,h(x))< \frac{\mu}{8}$.
If $\beta(g,x)\not\in Q\times_c[\nu,1]$, then $\varrho(\beta(g,x),h(\beta(g,x))=0$. And if $\beta(g,x)\in
Q\times_c[\nu,1]$ then, by what we just observed, $\varrho(\beta(g,x),h(\beta(g,x)) < \frac{\mu}{8}$. In
either case,
$$
    \varrho(h(x), h(\beta(g,x)) \le \varrho(h(x),x) + \varrho(x,\beta(g,x)) + \varrho(\beta(g,x), h(\beta(g,x)))
    <\textstyle\frac{3\mu}{8}.\qedhere
$$
\end{proof}

\begin{lemma}[Venetian Blind]\label{JimLemma03}
There is a map $\zeta :  C(Q)\to C(Q)$ such that
\begin{enumerate}
\item $\zeta^{-1}(\star) = \hat F$,
\item $\hat F$ is the only non-degenerate point inverse of $\zeta$, and
\item for each $x\in C(Q)$ and $g\in G$, $\varrho(\zeta(x),\zeta(\beta(g,x))) < \mu + \disp(\beta)$.
\end{enumerate}
\end{lemma}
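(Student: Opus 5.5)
The plan is to build $\zeta$ as a ``fibre-preserving'' map $\zeta(x,t)=(x,\tau(x,t))$ of $C(Q)=Q\times_c[0,1]$. Each $\tau(x,\cdot)$ will be a nondecreasing surjection of $[0,1]$, so that $\zeta$ only slides points up the cone lines. Since $\varrho((x,s),(y,t))=(1-\max\{s,t\})d(x,y)+|t-s|$, sliding points upward can only shrink the $Q$-part of a distance. So the whole difficulty is to control how much $|\tau(x,t)-\tau(y,s)|$ can exceed $|t-s|$ when $(y,s)=\beta(g,(x,t))$. I will do this by closing $\hat F$ to the vertex in finitely many small stages (the ``slats'' of the blind), rather than in one stretch, since a single stretch would cost a factor $1/\delta$. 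Lemma~\ref{preJimLemma03} is the model for the last stage.

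\textbf{Step 1.} Choose levels $\delta=t_0<t_1<\dots<t_m=d$ with $\nu<d<1$, where every gap satisfies $t_{k+1}-t_k<\mu/8$. Fix $\phi:Q\to[0,1]$ with $\phi^{-1}(1)=F$, and choose thresholds $0<s_1<\dots<s_m<1$. For $s\in[0,1]$ define piecewise linear nondecreasing maps $f_s:[0,1]\to[0,1]$ with $f_0=\identity$, depending continuously on $s$, satisfying the following.
\begin{enumerate}
\item As $s$ passes from $s_k$ to $s_{k+1}$, the slab $[t_{m-k-1},t_{m-k}]$ is progressively absorbed upward, one blind closing at a time.
\item $f_s$ is a homeomorphism for $s<1$.
\item $f_1$ is the identity on $[0,\delta']$ for a fixed $\delta'<\delta$, stretches $[\delta',\delta]$ onto $[\delta',1]$, and collapses $[\delta,1]$ to $1$.
\end{enumerate}
The key requirement is that for every $s$ and all $t,t'$ we have $|f_s(t)-f_s(t')|\le |t-t'|+\mu/4$. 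Each closing stage only shifts a short interval by at most one slat width, and the single large stretch, on $[\delta',\delta]$, happens only above points where $\phi$ is essentially $1$. Then put $\tau(x,t)=f_{\phi(x)}(t)$. Properties (1) and (2) of the statement are immediate, exactly as in Lemma~\ref{preJimLemma03}: for $x\notin F$, $\tau(x,\cdot)$ is a homeomorphism, and on $F$ the fibres above $[\delta,1]$ all go to $\star$.

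\textbf{Step 2.} Estimate $\varrho(\zeta(x),\zeta(\beta(g,x)))$. Write $x=(p,t)$ and $\beta(g,x)=(q,s)$.

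If $\phi(p)=\phi(q)$, then the triangle inequality and the slab estimate give
\[
\varrho(\zeta(x),\zeta(\beta(g,x)))\le \varrho(x,\beta(g,x))+\mu/4\le \disp(\beta)+\mu/4 .
\]

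In general $\phi(p)\neq\phi(q)$, and I add the term $|f_{\phi(p)}(s)-f_{\phi(q)}(s)|$. This is handled by uniform continuity of $(u,s)\mapsto f_u(s)$ on $[0,1]^2$, together with uniform continuity of $\phi$ on the compact set $Q$. These give an $\eta>0$ such that $d(p,q)<\eta$ forces this term to be below $\mu/4$. Near the top of the cone the $Q$-distance is suppressed by the factor $1-\max\{s,t\}$, and there the estimate is instead handled as in Lemma~\ref{preJimLemma03}: on $Q\times_c[\nu,1]$ both diameters and $\beta$-orbits are smaller than $\mu/8$.

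\textbf{Main obstacle.} I expect the real obstacle to be precisely the mixed case where $d(p,q)$ is not small while $t$ is low, since there $\phi(p)$ and $\phi(q)$ may differ a lot. The idea to try first is to shrink the support of $\phi$, replacing $\phi$ by $\phi^N$ or by a function supported in a tiny neighborhood $W$ of $F$. Because $\beta$ is $f\circ\hat\alpha_{n_0}\circ\check f^{-1}$ and $\hat\alpha_{n_0}$ preserves $F\times_c[0,1]$ pointwise, $\beta$-orbits of points over $W$ are within $2\disp(f)+o(1)$ of the cone line they start on. So if $W$ is small, two points of one orbit lying over $W$ have $\phi$-values differing by little. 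The points of an orbit not lying over $W$ have $\phi=0$, so $\zeta$ is the identity there. Points just outside $W$ have $\phi$ near $0$, so $f_\phi$ is close to the identity. Combining the cases should give the bound $\mu+\disp(\beta)$, with the $\mu$ budget split as $\mu/4$ for the slab estimate, $\mu/4$ for the variation in $\phi$, and $3\mu/8$ for the top part of the cone from Lemma~\ref{preJimLemma03}.
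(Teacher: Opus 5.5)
There is a genuine gap, and it occurs in Step 1, before the obstacle you flagged. Your ``key requirement'' $|f_s(t)-f_s(t')|\le |t-t'|+\mu/4$ contradicts your own condition on $f_1$. Since $f_1(\delta')=\delta'$ and $f_1(\delta)=1$, at $s=1$ the left side is $1-\delta'$ while the right side is $\delta-\delta'+\mu/4$. As $\delta<\frac12$, this fails unless $\mu\ge 4(1-\delta)>2$, and by continuity in $s$ it also fails for all $s$ near $1$.

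This is unavoidable. Any $\zeta(x,t)=(x,\tau(x,t))$ satisfying (1) sends $F\times(\delta-\eta,\delta)$ close to $\star$, so it displaces those points by almost $1-\delta$. Staging the closing in the parameter $s=\phi(x)$ spreads the work across the $Q$-direction, not along the fibres over $F$. Over $F$, $f_1$ is still the single large stretch of $[\delta',\delta)$ onto $[\delta',1)$ that you said you wanted to avoid.

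So even your ``easy'' case $\phi(p)=\phi(q)=1$ is unproved. Points of $F\times[\delta',\delta)$ are not fixed by $\beta$. If an orbit contains $(p,t)$ and $(q,s)$ with $p,q\in F$ and $\delta'\le t<s<\delta$, then $\varrho(\zeta(p,t),\zeta(q,s))\ge f_1(s)-f_1(t)$, which is of order $\frac{1-\delta'}{\delta-\delta'}(s-t)$. Nothing in the hypotheses on $\beta$ bounds the vertical spread $s-t$ of such orbits well enough to make this $<\mu+\disp(\beta)$.

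Your proposed remedy, shrinking the support of $\phi$ to a thin neighbourhood $W$ of $F$, does not touch this vertical problem and creates a horizontal one. Orbits of points below level $\delta$ have diameter of order $\disp(\beta)$, not $o(1)$. So a single orbit can straddle $W$ and contain one point with $\phi=1$ (displaced by almost $1-\delta$) and another with $\phi=0$ (not moved at all).

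The missing idea is that the collapse must be adapted to $\beta$, in stages, each stage adapted to the action conjugated by the previous ones. Your fibre-preserving ansatz itself is harmless; the paper's $\zeta$ also has that form. The paper pushes $\hat F$ upward in steps $h_1,\dots,h_{j_1+1}$, each moving points at most $\frac{2}{m}$ along fibres. The support $K(i)=B(i)\times[c(i),\frac{k+i+1}{m}]$ of $h_i$ is chosen only after $\gamma_{i-1}=h_{i-1}\circ\cdots\circ h_1$ is known. It is placed inside the open set $U(i)$ of points whose orbits under $\gamma_{i-1}\circ\beta\circ\check\gamma_{i-1}^{-1}$ have diameter $<\frac{\mu}{2}$; this set is a neighbourhood of the pointwise fixed set $\gamma_{i-1}(\hat F)$.

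Each step is then estimated only through its small displacement, applied to pairs of points that are already close under the current conjugate action; pairs outside $K(i)$ are untouched. So no Lipschitz-type bound is ever needed, even though the composite stretches enormously. The one large collapse is done by Lemma~\ref{preJimLemma03} inside $Q\times_c[\nu,1]$, whose diameter is $<\frac{\mu}{8}$. In your scheme it sits at the bottom edge of $\hat F$, where $Q\times_c[\delta',\delta]$ is large and the orbits are uncontrolled.
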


\begin{proof}
Fix a positive integer $m$ for which there exists a positive integer $k$ such that
\begin{enumerate}
\item $\frac{k}{m} < \delta < \frac{k+1}{m}<\nu$, $\frac{2}{m} < 1{-}\nu$, and $\frac{6}{m} < \frac{\mu}{4}$.
\end{enumerate}
\noindent Let
$j_1 = \max\{j \in\mathbb{N} : \frac{k+j}{m} \le \nu\}$. Hence $\frac{k+j_1}{m} \le \nu < \frac{k+j_1+1}{m} <
\frac{k+j_1+2}{m} < 1$.

By induction on $1\le i\le  j_1{+}1$, we will construct homeomorphisms $h_1,\dots,h_{j_1+1}$ of $C(Q)$, real numbers
$c(1),c(2),\dots, c(j_1{+}1)$ with
$$
    \textstyle\frac{k}{m} < c(1) < \delta < c(2) < \textstyle\frac{k+1}{m} < c(3) < \textstyle\frac{k+2}{m} < \cdots <
    \textstyle\frac{k+j_1-1}{m} < c(j_1{+}1) < \textstyle\frac{k+j_1}{m},
$$
closed neighborhoods
$$
    B(1)\supseteq B(2) \supseteq \cdots\supseteq B(j_1{+}1)
$$
of $F$ in $Q$, and open subsets
$$
    U(1),U(2), \dots, U(j_1{+}1)
$$
of $C(Q)$
such that for every $i \le j_1{+}1$, if $\gamma_i = h_i\circ\cdots\circ h_1$, then
\begin{enumerate}[resume*]
\item $h_i$ is supported on $K(i) = B(i) \times [c(i),\frac{k+i+1}{m}] \subseteq U(i)$,\label{supported}
\item $h_1(F\times [\delta,\frac{k+2}{m}]) = F\times [\frac{k+1}{m},\frac{k+2}{m}])$, and for $i > 1$,\\ $h_{i}(F\times [\frac{k+i-1}{m},\frac{k+i+1}{m}]) = F\times [\frac{k+i}{m},\frac{k+i+1}{m}],$
\item $\varrho(h_i,\identity_{C(Q)})\le  \frac{2}{m}$, \label{afstand}
\item $U(1) =\{x\in C(Q): \diam_{\varrho}(\beta(G\times \{x\}))< \mu\}$, and for $i > 1$, $U(i)$ is the set of all $x\in
C(Q)$ such that \label{deUs}
      $
      \diam_{\varrho} (\gamma_{i-1}\circ\beta\circ\check\gamma_{i-1}^{-1}(G\times\{x\}))< \textstyle\frac{1}{2}\mu
      $
      (we again use the notation introduced in Section \ref{preliminaries}),
\item $\gamma_i(\hat F) = F\times_c [\frac{k+i}{m},1]$,
\item for each $x\in C(Q)$ and $g\in G$, $\varrho(\gamma_i(x),\gamma_i(\beta(g,x))) \le \frac{6}{m} + \frac{1}
{2}\mu + \varrho(x,\beta(g,x))$.\label{laatste}
\end{enumerate}

\begin{step}
$i=1$.
\end{step}

For every $x\in \hat F$ and $g\in G$, $\varrho(x,\beta(g,x))=0$, hence
$U(1)$ defined as in (\ref{deUs})
is an open neighborhood of $\hat F$ in $C(Q)$. Hence it is clear that we may pick $\frac{k}{m} < c(1)<\delta $
and $B(1)$ such that $K(1) = B(1) \times [c(1),\frac{k+2}{m}] \subseteq U(1)$.
By the same technique as in the proof of Lemma~\ref{preJimLemma03} (use a Urysohn function that is 0 off
the interior of $B(1)$ and precisely $1$ on $F$), there is a homeomorphism $h_1 :  C(Q)\to C(Q)$ such
that
\begin{enumerate}[resume*]
\item $h_1$ is supported on $K(1)$,
\item $h_1(F\times [\delta,\frac{k+2}{m}] = F\times [\frac{k+1}{m},\frac{k+2}{m}]$, \label{jawelledrie}
\item if $(x,t) \in K(1)$, then for some $t\le t'\le \frac{k+2}{m}$, $h_1(x,t)=(x,t')$.\label{jawellevier}
\end{enumerate}
Observe that by (\ref{jawelledrie}),
$h_1(\hat F) = F\times_c [\frac{k+1}{m},1]$, and so, $h_1(\hat F) \subseteq \hat F$,\label{waarisFstapeen} and by
(\ref{jawellevier}),
$\varrho(h_1,\identity_{C(Q)})\le \frac{k+2}{m} - c(1)\le \frac{k+2}{m} - \frac{k}{m} = \frac{2}{m}$.

We claim that
for each $x\in C(Q)$ and $g\in G$, $\varrho(h_1(x),h_1(\beta(g,x))) \le \frac{4}{m} + \varrho(x,\beta(g,x))$. \label{jawellezeven}
To prove this, we basically repeat the same arguments as in the proof of Lemma~\ref{preJimLemma03}. Let
$x\in C(Q)$ and $g\in G$. If both $x$ and $\beta(g,x)$ are in $C(Q)\setminus K(1)$, then $h_1(x) = x$ and $h_1(\beta(g,x)) =
\beta(g,x)$, hence $\varrho(h_1(x),h_1(\beta(g,x))) = \varrho(x,\beta(g,x))$. If $\beta(g,x)\in C(Q)\setminus K(1)$ and
$x \in K(1)$, then $h_1(\beta(g,x))=\beta(g,x)$ and $\varrho(x,h_1(x)) \le \frac{2}{m}$, thus
$$
    d(h_1(x),h_1(\beta(g,x))) \le \varrho(h_1(x),x) + \varrho(x, \beta(g,x))\le \textstyle\frac{2}{m} + \varrho(x,
    \beta(g,x)).
$$
Since $\beta$ is an action, this also deals with the case that $\beta(g,x) \in C(Q)\setminus K(1)$ and $x\in
K(1)$. It remains to check what happens if $x,\beta(g,x)\in K(1)$. Now, $\varrho(x,h_1(x)) \le \frac{2}{m}$ and $
\varrho(\beta(g,x), h_1(\beta(g,x)) \le \frac{2}{m}$. Hence $\varrho(h_1(x),h_1(\beta(g,x)) \le \frac{4}{m}+ \varrho(x,\beta(g,x))$.

We conclude that our choices satisfy (\ref{supported}) through (\ref{laatste}) for $i=1$.

\begin{step}
From $i$ to $i{+}1$.
\end{step}

Assume that we completed the construction for some $i \le j_1$.
If $y\in \gamma_i(\hat F)$, then $\gamma_i^{-1}(y)\in \hat F$, and so $\gamma_i(\beta(g,\gamma_i^{-1}(y))) = y$.
Hence $U(i{+}1)$ defined as in $
(\ref{deUs})_{i+1}$ is an open neighborhood of $\gamma_i(\hat F) = F\times_c [\frac{k+i}{m},1]$. Hence it is
clear that we may pick $\frac{k+i-1}{m} < c(i{+}1)< \frac{k+i}{m}$ and $B(i{+}1)$ such that $K(i{+}1) = B(i{+}1)
\times [c(i{+}1),\frac{k+i+2}{m}] \subseteq U(i{+}1)$.

There is a homeomorphism $h_{i+1} :  C(Q)\to C(Q)$ constructed in the same way as $h_1$, such that
\begin{enumerate}[resume*]
\item $h_{i+1}$ is supported on $K(i{+}1)$,
\item $h_{i+1}(F\times [\frac{k+i}{m},\frac{k+i+2}{m}]) = F\times [\frac{k+i+1}{m},\frac{k+i+2}{m}]$,
\item $\varrho(h_{i+1},\identity_{C(Q)})\le  \frac{2}{m}$.\label{jawelledertien}
\end{enumerate}
Hence $h_{i+1}(\gamma_i(\hat F)) = F\times_c [\frac{k+i+1}{m},1]$.

 We claim that
for each $x\in C(Q)$ and $g\in G$,
$$
    \varrho(h_{i+1}(\gamma_i(x)),h_{i+1}(\gamma_i(\beta(g,x)))) \textstyle\le \frac{6}{m} + \textstyle\frac{1}{2}\mu + \varrho(x,\beta(g,x)).
$$
We distinguish between several subcases.

\begin{case}
Both $x$ and $\beta(g,x)$ are not in $\bigcup_{j\le i+1} K(j)$.
\end{case}

Then $h_{i+1}(\gamma_i(x)) = x$ and $h_{i+1}(\gamma_i(\beta(g,x))) = \beta(g,x)$, hence there is nothing to prove.

\begin{case}
$\beta(g,x)\not\in \bigcup_{j\le i+1} K(j)$ but $x\in \bigcup_{j\le i+1} K(j)$.
\end{case}
Clearly, $h_{i+1}(\gamma_i(\beta(g,x))) = \beta(g,x)$. If $x\not\in K_{i+1}$, then since $h_{i+1}$ is supported
on $K(i{+}1)$, by $(\ref{laatste})_i$ there is nothing to prove. So assume that $x\in K(i{+}1)$. Observe that $
\bigcup_{j\le i-2}K(j) \cap K(i{+}1) = \emptyset$. Hence $h_{i+1}(\gamma_i(x))) = h_{i+1}(h_i(h_{i-1}(x)))$,
hence by $(\ref{afstand})_{i-1,i}$ and
(\ref{jawelledertien}), $\varrho(h_{i+1}(\gamma_i(x)),x)\le \frac{6}{m}$. Hence this is again as required.

Since $\beta$ is an action, this also deals with the case $x\not\in \bigcup_{j\le i+1} K(j)$ but $\beta(g,x)\in
\bigcup_{j\le i+1} K(j)$.

\begin{case}
Both $x$ and $\beta(g,x)$ are in $\bigcup_{j\le i+1} K(j)$.
\end{case}

Assume first that $x\in \bigcup_{j\le i} K(j)$. Assume first that $y=\gamma_i(x) \in K(i{+}1) \subseteq U(i{+}1)$. Let $e_G$ denote the neutral element of $G$ and let $g\in G$ be arbitrary. By $(\ref{deUs})_{i+1}$,
$
      \varrho (\gamma_{i}\circ\beta\circ\check\gamma_{i}^{-1}(e_G,y),\gamma_{i}\circ\beta\circ\check\gamma_{i}^{-1}(g,y))< \textstyle\frac{1}{2}\mu.
$
But $\gamma_{i}\circ\beta\circ\check\gamma_{i}^{-1}(e_G,y) = \gamma_{i}\circ\beta (e_G,x)=\gamma_i(x)$ and
$\gamma_{i}\circ\beta\circ\check\gamma_{i}^{-1}(g,y)=\gamma_i(\beta(g,x))$. Hence $\varrho(\gamma_i(x),\gamma_i(\beta(g,x))) < \frac{1}{2}\mu$ and so by (\ref{jawelledertien}),
$$
    \varrho(h_{i+1}(\gamma_i(x)),h_{i+1}(\gamma_i(\beta(g,x))) < \textstyle\frac{4}{m} + \frac{1}{2}\mu.
$$
If $\gamma_i(\beta(g,x))\in K_{i+1}$, then the same argument applies with $y = \gamma_i(\beta(g,x))$ since $
\beta$ is an action.
Hence assume that $\gamma_i(x), \gamma_i(\beta(g,x))\not\in K_{i+1}$. But then these points are not moved by
$h_{i+1}$ and so we are done by $(\ref{laatste})_i$.

Since $\beta$ is an action, this also deals with the case $\beta(g,x)\in \bigcup_{j\le i} K(j)$.

Hence we are left with the case $x,\beta(g,x)\in K_{i+1}\setminus\bigcup_{j\le i} K_i$. But then both $x$ and
$\beta(g,x)$ are not moved by $\gamma_i$, and hence we are done by $(\ref{jawelledertien})$.
Hence our choices satisfy (\ref{supported}) through (\ref{laatste}) for $i{+}1$, which completes the induction.

Now, $\gamma_{j_1+1}(\hat F) \subseteq Q\times_c [\nu,1]$. Let $h$ be the map of Lemma~\ref{preJimLemma03},
and put $\zeta = h\circ \gamma_{j_1+1}$. Then $\zeta$ satisfies (1) and (2) in the formulation of the
Venetian Blind Lemma, so we only need to worry about (3). So take an arbitrary $x\in C(Q)$ and $g\in G$.
Then by $(\ref{laatste})_{j_1+1}$,
$\varrho(\gamma_{j_1+1}(x),\gamma_{j_1+1}(\beta(g,x))) \le \frac{6}{m} + \frac{1}{2}\mu + \varrho(x,\beta(g,x))$.
Now since $\diam_\varrho(Q\times_c [\nu,1]) < \frac{\mu}{8}$ and $h$ is the identity off $Q\times_c [\nu,1]$,
if $\gamma_{j_1+1}(\beta(g,x))\not\in Q\times_c [\nu,1]$, $\varrho(\zeta(x), \zeta(\beta(g,x)) < \frac{6}{m} +
\frac{\mu}{2} + \varrho(x,\beta(g,x)) + \frac{\mu}{8}$, and if this is not the case, $\varrho(\zeta(x),
\zeta(\beta(g,x)) < \frac{6}{m} + \frac{\mu}{2} + \varrho(x,\beta(g,x)) + \frac{\mu}{4}$. Hence we are done.
\end{proof}

\begin{remark}${}$
\begin{enumerate}

\item The reader familiar with the Bing Shrinking Criterion \cite{chapman:lectures, bing_shrinking, vm:book} will note that we
have been able to set our shrinking argument in the simplest nontrivial context, where there is only one
nondegenerate point inverse and it is a cone.  In this situation, the Criterion does not simplify the argument.

\item The only property of the $G$-action used in this argument is that its orbits produce a decomposition of $Q$
into compact sets [the nondegenerate orbits and the fixed point set $F = \Fix(\sigma_n)$].  We may replace $F$
by any compact Z-set with trivial Shape (i.e., that is Cell-Like) and the nondegenerate $G$-orbits with closed
sets, $A$, such that $\diam(A)\to 0$ uniformly as  $A \to F$.
\end{enumerate}
\end{remark}

\subsection{Isotopies}

In ~\cite{Wong67}, Wong proved that each homeomorphism of $Q$ is isotopic to the identity.  Using his general idea,
we show that any two sufficiently ``small'' based-free actions of a compact group $G$ on $Q$ are isotopic through
``small'' based-free actions.

\subsubsection{The radial homeomorphism}
Let $\phi : \mathbb{R}^2\to \mathbb{R}^2$ be the \emph{radial homeomorphism} $\bar v\to c\cdot \bar v$ where $c=c(\bar
v)$ is the positive real number such that multiplication carries $\mathbb{R}\bar v \cap (\mathbb{J}\times \mathbb{J})$ to $
\mathbb{R}\bar v \cap D^2$, $D^2$ being the unit disc in the
Euclidean metric. We will show that $\phi$ is bi-Lipschitz with respect to the standard Euclidean norms. The exact
Lipschitz constant is not important for us, so will not bother trying to find sharp bounds.

Our interest in the radial homeomorphism lies in the fact that it allows us to rotate $\mathbb{J}^2$ as easily as one
can rotate $D^2$. Indeed, if we first apply $\phi$, then rotate $D^2$ say counter-clockwise through $90^\circ$,
and then apply $\phi^{-1}$, we obtain a homeomorphism of $\mathbb{J}^2$ that sends each $(x_1,x_2)\in
\mathbb{J}^2$ onto $({-}x_2,x_1)$. Moreover, this homeomorphism is isotopic to the identity on $\mathbb{J}^2$. By
abuse of terminology, we call homeomorphisms obtained in this way \emph{rotations of $\mathbb{J}^2$}. These
homeomorphisms are well-known tools in infinite-dimensional topology, see e.g. Wong~\cite[p. 151]{Wong67}.

We will first find an estimate for the absolute value of the operator norms of the Jacobians at the points where $
\phi$ and $\phi^{-1}$ are differentiable. This will allow us to apply well-known results from elementary classical
analysis.

It is well-known that for a matrix $A={{(a_{ij})}_{i=1}^m}_{j=1}^n\in \mathbb{R}^{m\times n}$, its operator norm $\| A \|$ is
bounded by $\sqrt{mn}\, \max\{|a_{ij}|: i\le m, j\le n\}$ (\cite[2.5.3]{DuistermaatKolkI},~\cite[Chapter 5]{HornJohnson13}). We will need this result for $2{\times}2$ matrices only, for which there is a simple and direct
proof which we leave as an exercise to the reader.

To simplify our notation, for $(x,y)\in \mathbb{R}^2\setminus \{(0,0)\}$, we put $d= (x^2{+}y^2)^{-\frac{1}{2}}$.

It is easy to check that $\phi$ is given by
$$
    \phi(x,y) = \begin{cases}
                (x,y) & (x=0 \text { or  } y=0),\\
                \sgn(y)\cdot(xyd, y^2d) & (x\not= 0 \,\& \, y\not= 0 \,\& \, (|x| \le |y|),\\
                \sgn(x)\cdot(x^2d, xyd) & (x\not= 0 \,\& \, y\not= 0 \,\& \,(|y| \le |x|).
               \end{cases}
$$
Consider the lines $y=x$ and $y=-x$. The complement of the union of these
lines has four open convex components $V_1,\dots, V_4$ on each of which $\phi$ is differentiable. Let $V$ be
one of those components, say
$
    V = \{(x,y)\in \mathbb{R}^2 : |x| < y\}.
$
On $V$, $\phi$ is given by $(x,y) \mapsto (xyd, y^2d)$. Put $\psi=\phi |_V$. Since $\frac{\partial}{\partial
x} d = -x d^3$ and $\frac{\partial}{\partial y} d = -y d^3$, we get

$$
    D\psi(x,y) = \left ( \begin{array}{cc}
                    yd{-}x^2yd^3 & xd{-}xy^2d^3\\
                    -xy^2d^3 & 2yd {-} y^3 d^3
                    \end{array} \right ).
$$
Since $|xd|\le 1$ and $|yd|\le 1$, the absolute value of each entry of this matrix is at most 3. By the above, for
every $(x,y)\in V$, $\|D\psi(x,y)\| \le 6$.

Analogously, $\phi^{-1}(\bar v)=t\bar v$, where multiplication by $t$ sends $\mathbb{R}\bar v \cap D^2$ to
$\mathbb{R}\bar v \cap \mathbb{I}^2$. It is easy to check that $\phi^{-1}$ is given by
$$
    \phi^{-1}(x,y) = \begin{cases}
                (x,y) & (x=0 \text{ or } y=0),\\
                \sgn(y)\cdot{\big (}\frac{ x}{yd}, \frac{1}{d}{\big )} &  (x\not= 0 \,\& \, y\not= 0 \,\& \, (|x| \le |y|),\\
                \sgn(x)\cdot{\big (}\frac{1}{d}, \frac{y}{xd}{\big )} & (x\not= 0 \,\& \, y\not= 0 \,\& \,(|y| \le |x|).
               \end{cases}
$$
We consider $\phi^{-1}$ on one of $V_1,\dots,V_4$, say
$
    W = \{(x,y)\in \mathbb{R}^2 : |y| < -x\},
$
and put $\chi = \phi^{-1}|_W$. On $W$, $\chi$ is given by $(x,y) \mapsto -{\big (}\frac{1}{d}, \frac{y}{xd}
{\big )}$, hence
$$
    D\chi(x,y) = -\left ( \begin{array}{cc}
                    xd & yd\\
                    -\frac{y}{x^2d} +yd & \frac{1}{xd} + \frac{y^2d}{x}
                    \end{array} \right ).
$$
Now since $|y|<-x$, $\frac{1}{|xd|} = \left|\frac{\sqrt{x^2+y^2}}{|x|}\right|=\left|\sqrt{1+\frac{y^2}{x^2}}\right|
<\sqrt{2}$. From this it is easily seen that the absolute value of each entry of this matrix is at most 3, hence $\|
D\chi(x,y)\| \le 6$.

By symmetry we have the same bounds in all $V_1,\dots,V_4$. Hence by~\cite[2.5.3]{DuistermaatKolkI}, on each
$V_i$, $\phi$ is bi-Lipschitz with Lipschitz constant~6. By continuity, this means that $\phi$ is bi-Lipschitz with
Lipschitz constant 6 on every closure $\overline{V}_1,\dots,\overline{V}_4$. We claim that this implies that $\phi$ is bi-Lipschitz with Lipschitz constant 6 on all of $\mathbb{R}^2$. Indeed, take arbitrary $(x,y),(a,b)\in\mathbb{R}^2$. Consider the
straight-line segment $I=\{(1{-}t)(x,y) + t(a,b) : 0\le t\le 1\}$ joining $(x,y)$ and $(a,b)$. If it lies in one of the $
\overline{V}_i$, there is nothing to prove. If not, it meets the boundaries of the sets $\overline{V}_1,\dots\overline{V}_4$ in at
most two points, say $(p,q)$ and $(r,s)$, where $(x,y) < (p,q) \le (r,s) < (a,b)$ (in the natural order of $\mathbb{I}$). But
then, by the above,
\begin{align*}
    \|\phi(x,y){-}\phi(a,b)\| &\le \|\phi(x,y){-}\phi(p,q)\| + \|\phi(p,q){-}\phi(r,s)\|+ \|\phi(r,s){-}\phi(a,b)\|\\
                            &\le 6(\|(x,y){-}(p,q)\| + \|(p,q){-}(r,s)\| + \|(r,s){-}(a,b)\|) \\
                            &= 6 \|(x,y)-(a,b)\|.
\end{align*}
The same calculation works for $\phi^{-1}$ as well, and hence we are done.

\subsubsection{Changing the norm}\label{changing}
Now we investigate the Lipschitz behavior of $\phi$ under different but equivalent norms on $\mathbb{R}^2$ that will be important later. For $i,j\ge 0$ and $(x,y)\in \mathbb{R}^2$, put
$$
    \| (x,y) \|_{i,j} = 2^{-i} |x| + 2^{-j} |y|.
$$
This is a weighted $L_1$ norm, equivalent to the Euclidean norm $\| {\cdot} \|$. Indeed, let $i \le j$ and observe that
$$
    \mbox{$\frac{2^i}{\sqrt{2}}$} (2^{-i}|x| + 2^{-j} |y|)\le \sqrt{x^2+y^2} \le 2^j (2^{-i} |x| + 2^{-j}|y|).
$$
Thus $\phi$ and $\phi^{-1}$ both have Lipschitz constants $6{\cdot} 2^{j-i} \sqrt{2}$ with respect to the norm $\|{\cdot}\|_{i,j}$.

We now consider a rotation $\rho$ of $\mathbb{R}^2$, and its conjugate $g=\phi^{-1} \circ \rho \circ \phi$. The rotation preserves lengths, hence is bi-Lipschitz with Lipschitz constant $2^{j-i}\sqrt{2}$ with respect to the norm $\|{\cdot}\|_{i,j}$ (here $i\le j$). Thus, $g$ is bi-Lipschitz as well with Lipschitz constant $72 \sqrt{2}{\cdot} 2^{3(j-i)}$ with respect to the same norm. Observe that $g$ leaves $\mathbb{J}^2$ invariant.

We conclude that rotations of $\mathbb{J}^2$ are bi-Lipschitz.

\subsubsection{Isotopies between based-free $G$-actions}
We now show that the actions constructed in Section \ref{smallactions} are isotopic through similar actions.

\begin{theorem}\label{TheoremOnIsotopies}
If $\alpha:G\times Q\to Q$ and $\beta:G\times Q\to Q$ are based-free actions of a compact group $G$ on $Q$, there is an
isotopy $\Lambda:G \times Q \times [0,1] \to Q$
from $\alpha$ to $\beta$ through based-free $G$-actions $\lambda_t$; for every $\varepsilon > 0$ there is a $\delta >0$ such that if $\alpha $ and $\beta$ are $\delta$-close to the trivial action, then they are isotopic through based-free $G$-actions that are $\varepsilon$-close to the trivial action.  If $\Fix(\alpha)=\Fix(\beta)=
\{\bar0\}$, then we may require that $\Fix(\lambda_t)=\{\bar 0\}$ for each $t\in [0,1]$.
\end{theorem}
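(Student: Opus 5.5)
We follow Wong's idea: interleave the two actions on a product of two Hilbert cubes and rotate one factor into the other. First, by homogeneity of $Q$ and the Estimated Homeomorphism Extension Theorem (Chapman 19.4), we reduce to the case $\Fix(\alpha)=\Fix(\beta)=\{\bar 0\}$. Since a point is a Z-set, a homeomorphism of $Q$ carrying $\Fix(\beta)$ to $\Fix(\alpha)$ can be chosen isotopic to the identity by a small isotopy when the points are close. Conjugating by the isotopy gives a path of based-free actions, and its displacement is at most that of $\beta$ plus twice that of the isotopy. For small actions, the fixed points themselves need not be close, so we use the cone picture of Theorem~\ref{derdestelling} instead. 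Either the fixed point is moved within a small set, or we note that any based-free action with $\disp<\delta$ may be conjugated by a homeomorphism of displacement $\le$ a small quantity moving the fixed point to $\bar0$. This uses homogeneity in the estimated form, which we expect to be available with control only after restricting to $Q_n$ coordinates; we will handle this by working in $\mathbb{J}^n\times Q_n$.

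Next, identify $Q$ with $Q\times Q$ via the interleaving of coordinates: odd coordinates go to the first factor and even coordinates to the second. The coordinate pairs $(x_{2i-1},x_{2i})$ are grouped into squares $\mathbb{J}^2$. Let $R_t$ be the product over $i$ of the rotations of $\mathbb{J}^2$ through angle $t\pi/2$, in the sense of \S3.3.1. Then $R_1$ exchanges the two factors, up to a sign that can be absorbed by a further rotation, and $R_t$ fixes $\bar0$. We define a path of actions in three stages. First, isotope $\alpha$ to $\alpha\times\identity$: here we use Wong's observation that $\alpha$, viewed on $Q\cong Q\times Q$, is conjugate to $\alpha\times\identity$ by a homeomorphism isotopic to the identity. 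Second, conjugate $\alpha\times\identity$ by $R_t$ for $t\in[0,1]$, reaching $\identity\times\alpha$. Third, pass to $\identity\times\beta$ and $\beta\times\identity$ symmetrically, and finally back to $\beta$. Conjugates of based-free actions by homeomorphisms fixing $\bar0$ are based-free with fixed point $\bar0$. This gives the last assertion once the intermediate stage between $\identity\times\alpha$ and $\identity\times\beta$ is handled. For that stage we use the straight Alexander-type trick on the cone $C(Q)=Q\times_c[0,1]$: the action $\alpha$ on the levels $t\le s$ together with $\beta$ rescaled on the levels $t\ge s$ is not continuous in general. So instead we alternate, using $\alpha\times\beta$ as an intermediate action. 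The path runs $\alpha\times\identity \to \alpha\times\beta$ and then $\alpha\times\beta\to\identity\times\beta$, where the first leg is achieved by conjugating $\beta$ on the second factor by a shrinking isotopy. This is the step requiring an actual construction, and we expect it to be the main obstacle. Concretely, we replace $\beta$ on the second factor by $\beta_s=f_s\beta f_s^{-1}$, where $f_s$ pushes $Q$ toward $\bar0$ in the cone picture. We let $\beta_0$ be trivial in the limit, using the cone on which the action is radial, as in Theorem~\ref{derdestelling}; the standard form obtained there lets us assume the action is conjugated from a cone action. The actions $\alpha\times\beta_s$ are then semifree with fixed point $\bar0$, because the fixed set of a product of a based-free action and a semifree action is the product of the fixed sets.

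For the estimates, we use the bi-Lipschitz bounds of \S\ref{changing}. On the pair $(2i-1,2i)$ with the weighted norm $\|\cdot\|_{2i-1,2i}$, each rotation has Lipschitz constant at most $72\sqrt2\cdot 8$, a bound independent of $i$. Hence $R_t$ is uniformly Lipschitz with respect to $d$, say with constant $L$. Therefore
$$\disp(R_t\gamma R_t^{-1})\le L\,\disp(\gamma)$$
for any action $\gamma$. Taking $\delta=\varepsilon/(CL)$, with $C$ absorbing the constants from the interleaving, which changes the metric by at most a factor of $4$, and from the cone shrinking stage, every action along the path is $\varepsilon$-close to the trivial action. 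Continuity in $t$ of the resulting $\Lambda$ follows from the joint continuity of $R_t$ and $f_s$.
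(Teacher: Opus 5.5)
\textbf{There is a genuine gap, and it sits in the core construction, not only in the estimates.}

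The action $\alpha\times\identity$ on $Q\times Q$ has fixed point set $\{\bar 0\}\times Q$, which is a Hilbert cube, so it is not based-free. Because $\Fix(h\alpha h^{-1})=h(\Fix(\alpha))$, $\alpha$ cannot be conjugate to $\alpha\times\identity$. The ``Wong observation'' you invoke is therefore false here.

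Every stage of your path passes through actions of this kind: the conjugates $R_t(\alpha\times\identity)R_t^{-1}$, the actions $\identity\times\alpha$, $\identity\times\beta$ and $\beta\times\identity$, and the endpoint $\alpha\times\beta_0=\alpha\times\identity$ of your shrinking leg. All of them have a fixed set homeomorphic to $Q$. So your $\Lambda$ is not an isotopy through based-free actions.

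Wong's trick for homeomorphisms pushes $h$ off to infinity with limit $\identity$. Here the trivial action is forbidden, so you must push one action off to infinity while the \emph{other} action fills in. The paper does this with the actions $h_n$:
\begin{itemize}
\item In $h_n$, $\beta$ acts on the coordinates $1,\dots,n,n+2,n+4,\dots$ and $\alpha$ acts on $n+1,n+3,\dots$.
\item Each $h_n$ is a product of two based-free actions with fixed point $\bar 0$, so it is based-free with fixed point $\bar 0$.
\item $h_n$ is the conjugate of $h_{n+1}$ by the end of the swap isotopy $f_{n,t}$ of Lemma~\ref{JimEen}. That isotopy fixes the first $n$ coordinates and is uniformly bi-Lipschitz.
\item Concatenating these conjugations on $[\frac{1}{n+1},\frac{1}{n}]$ gives a path from $h_1$ (``$\beta\times\alpha$'') to $\beta$. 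It is continuous at $t=0$ because $h_n(g,x)\equiv_n\beta(g,c)$ with $c\equiv_n x$.
\end{itemize}
One further swap takes ``$\beta\times\alpha$'' to ``$\alpha\times\beta$'', and the same construction connects that to $\alpha$. No shrinking step and no degenerate intermediate action are needed.

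\textbf{The reduction to $\Fix=\{\bar 0\}$ in the $\varepsilon$--$\delta$ part also fails as written.}

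A homeomorphism carrying $\Fix(\beta)=\{y\}$ to $\bar 0$ has displacement at least $d(y,\bar 0)$, and this need not be small. So your bound $\disp(\beta)+2\disp(S)$ is of no use.

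What actually controls $d(S_t\beta(g,z),S_t(z))$ is the equicontinuity of the family $\{S_t\}$, not their displacement. You therefore need one modulus of continuity valid for all conjugating isotopies, whatever the position of the fixed point. Neither the ``cone picture'' nor working in $\mathbb{J}^n\times Q_n$ supplies this.

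The paper gets this uniformity in three steps:
\begin{itemize}
\item Effros' theorem gives a compact $L\subseteq\mathcal{H}(Q)$ whose evaluations at $\bar 0$ cover $Q$.
\item Contractibility of $\mathcal{H}(Q)$ then produces a compact, hence equicontinuous, family $K'=H(K\times[0,1])$ of isotopy levels, where $K=L^{-1}$.
\item $\delta$ is chosen from this equicontinuity. The special case, Theorem~\ref{TheoremOnIsotopiesSpecialCase} with its constant $10^7$, then finishes the proof after rescaling $\varepsilon$.
\end{itemize}
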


We will first concentrate on a special case of this result which we will show later suffices.

\begin{theorem}\label{TheoremOnIsotopiesSpecialCase}
If $\alpha:G\times Q\to Q$ and $\beta:G\times Q\to Q$ are based-free actions of a compact group $G$ on $Q$ such that $\Fix(\alpha) = \Fix(\beta) =\{\overline 0\}$, there is an
isotopy $\Lambda:G \times Q \times [0,1] \to Q$
from $\alpha$ to $\beta$ through based-free $G$-actions $\lambda_t$ such that for every $t\in [0,1]$, $\Fix(\lambda_t) = \{\overline 0\}$ and $\displ(\lambda_t) \le 10^7(\displ(\alpha) + \displ(\beta))$.
\end{theorem}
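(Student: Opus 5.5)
We will follow Wong's idea~\cite{Wong67} of interleaving coordinates, using the rotations of $\mathbb{J}^2$ from Section~\ref{changing}. Write $Q=\prod_i \mathbb{J}_i$ and split the coordinates into odd and even ones, giving homeomorphisms $Q\approx Q_{\rm odd}\times Q_{\rm even}$ with $Q_{\rm odd}\approx Q\approx Q_{\rm even}$. The plan is to connect $\alpha$ and $\beta$ through three intermediate stages:
\begin{enumerate}
\item $\alpha$ itself;
\item the ``spread'' action $\alpha'$, which is $\alpha$ transported to the odd coordinates (via $x\mapsto (x_1,0,x_2,0,\dots)$ and its extension) times the trivial action on the even coordinates;
\item the analogous action $\beta'$ in which $\beta$ is carried by the even coordinates;
\item $\beta$ itself.
\end{enumerate}
The spread actions $\alpha'$ and $\beta'$ are not actions on $Q$ with fixed point $\{\bar 0\}$, since they fix $\{\bar0\}\times Q_{\rm even}$. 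We therefore use instead the product action $\alpha\times\beta$ acting on $Q_{\rm odd}\times Q_{\rm even}$, whose fixed point set is exactly $\{\bar 0\}$ and which is based-free because $\alpha$ and $\beta$ are. Its displacement is at most $\disp(\alpha)+\disp(\beta)$ up to the constant factor coming from the reindexing, since coordinate $i$ moves to position $2i$ or $2i-1$ and the weights $2^{-i-1}$ change by a factor of at most $4$.

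The core step is an isotopy from $\alpha$ to $\alpha\times\beta$ (and symmetrically from $\beta$ to $\alpha\times\beta$). We do this in two moves.

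First we isotope the identification $Q\to Q_{\rm odd}\times Q_{\rm even}$ to the identity. Define $\Phi_t$ to be the infinite composition of rotations of the pairs $\mathbb{J}_i\times\mathbb{J}_j$ that successively slide coordinates into their interleaved positions, parametrized Wong-style so that at time $t$ only finitely many rotations have not yet been completed. Conjugating by $\Phi_t$ gives an isotopy from $\alpha$ to $\alpha$ transported to the odd coordinates. Each conjugate still has fixed point set $\{\bar0\}$, because every $\Phi_t$ is a homeomorphism fixing $\bar0$. The displacement of the conjugate is controlled by the bi-Lipschitz constant $72\sqrt2\cdot 2^{3(j-i)}$ of the rotations with respect to $\|\cdot\|_{i,j}$, multiplied by $\disp(\alpha)$. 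To keep $j-i$ bounded we rotate only adjacent coordinates, using a bubble-sort scheme. The constant $10^7$ will then be a product of a few such factors together with the weight-shift factors.

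Second, on the even factor we pass from the trivial action to $\beta$. For this we use the cone structure: $\beta$ on $Q_{\rm even}$ is conjugate to $\beta_s=\phi_s\beta\phi_s^{-1}$, where $\phi_s$ is a radial shrinking toward $\bar0$ of the coordinates. This does not interpolate continuously to the trivial action while keeping the product based-free and the fixed set equal to $\{\bar0\}$.

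Because of this difficulty, we plan instead to do the interpolation via rotations as well. Consider $\alpha\times\beta$ on $Q\times Q$. The key observation is that the diagonal-type action $(x,y)\mapsto(\alpha^g x,\beta^g y)$ is already based-free at every stage, so we only need an isotopy of actions that are all of product form up to conjugation. Concretely, we write the isotopy from $\alpha$ to $\alpha\times\beta$ as $\Phi_t\circ(\alpha\times\beta)\circ\Phi_t^{-1}$ on the model $Q\times Q\approx Q$. At $t=0$, $\Phi_0$ should place the $\beta$ factor onto the coordinates of index $>N$, pushed so far out that the result approximates $\alpha$. We then let $N\to\infty$ as $t\to0$, and define the endpoint at $t=0$ to be $\alpha$ itself by continuity. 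Continuity at $t=0$ in the sup metric holds because the discrepancy is bounded by $2^{-N}$ times a constant. Every intermediate action is a conjugate of the based-free action $\alpha\times\beta$ by a homeomorphism fixing $\bar0$, so its fixed point set is $\{\bar0\}$.

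The main obstacle is the uniform displacement bound. Conjugating by a homeomorphism $\Phi$ gives $\disp(\Phi\lambda\Phi^{-1})\le \mathrm{Lip}(\Phi)\cdot\disp(\lambda)$, and this bound only helps if the relevant Lipschitz constants are uniform in $t$ and $N$. I will try to show that each $\Phi_t$ is a product of commuting rotations on disjoint adjacent coordinate pairs $(i,i+1)$, applied in a bounded number of rounds (about four). Each round then has Lipschitz constant at most $72\sqrt2\cdot 2^3$ with respect to $d$, because a rotation on the pair $(i,i+1)$ is Lipschitz for $\|\cdot\|_{i,i+1}$ and the metric $d$ is the sum of such weighted norms. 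Bounding the product of these constants for both $\Phi_t$ and $\Phi_t^{-1}$ by roughly $10^7$ is where the real work and bookkeeping lie.
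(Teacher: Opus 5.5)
Your overall architecture is the paper's. You connect $\alpha$ and $\beta$ to an interleaved product action through conjugates by rotation isotopies of coordinate pairs, in stages indexed by $N$ with $N\to\infty$ as $t\to 0$, so that every level is based-free with fixed point $\bar 0$. The gap is exactly at the step you call the main obstacle, and the mechanism you propose for it cannot work. You conjugate a \emph{fixed} model, $\lambda_t=\Phi_t\circ(\alpha\times\beta)\circ\Phi_t^{-1}$, with $\alpha\times\beta$ interleaved on the odd and even coordinates. You then hope that $\sup_t\mathrm{Lip}(\Phi_t)<\infty$ because $\Phi_t$ consists of boundedly many rounds of adjacent-pair rotations. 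But to reach a stage that agrees with $\alpha$ up to coordinate $N$, $\Phi_t$ must move the $N$-th coordinate of the $\alpha$-factor from position $2N-1$ to position $N$. Boundedly many rounds of adjacent rotations move each coordinate only boundedly far. Moreover, any signed coordinate permutation making this move has Lipschitz constant at least $2^{-N-1}/2^{-2N}=2^{N-1}$ for $d$. So $\mathrm{Lip}(\Phi_t)\to\infty$ as $t\to 0$, and the estimate $\disp(\Phi_t\lambda\Phi_t^{-1})\le\mathrm{Lip}(\Phi_t)\disp(\lambda)$ gives no uniform bound.

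The missing idea is never to compose the transitions. The paper writes the stage actions down explicitly: for the isotopy starting at $\beta$, $h_n$ puts $\beta$ on coordinates $1,\dots,n,n+2,n+4,\dots$ and $\alpha$ on $n+1,n+3,\dots$. It then bounds $\disp(h_n)\le\disp(\alpha)+\disp(\beta)$ by direct computation (Lemma~\ref{eerstewenen}): each coordinate of either factor is placed at an index at least its own, so its weight can only shrink. This is also why your remark that the weights change by a factor of at most $4$ is off; they change by $2^{i-1}$ or $2^{i}$, but only downward, and that is what the bound needs. Conjugation is used only for the single step from $h_{n+1}$ to $h_n$, namely $\lambda_t=f_{n,t}^{-1}\circ h_{n+1}\circ\check f_{n,t}$ on $[\frac{1}{n+1},\frac{1}{n}]$. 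Here $f_{n,t}$ from Lemma~\ref{JimEen} is one round of $90^\circ$ rotations on the pairs $(n+2i-1,n+2i)$, followed by one round of $180^\circ$ rotations on the pairs $(n+4i-3,n+4i-1)$ to remove the signs. It fixes the first $n$ coordinates and is bi-Lipschitz with constant $2^{16}3^4<10^7$, uniformly in $n$ and $t$.

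Two smaller corrections. First, continuity at $t=0$ does not follow from a $C\cdot 2^{-N}$ bound, which would require the action to be Lipschitz. It follows because every level on $[\frac{1}{n+1},\frac{1}{n}]$, not only the stage endpoints, satisfies $\lambda_t(g,x)\equiv_n\beta(g,c)$ for some $c\equiv_n x$ (since $f_{n,t}$ fixes the first $n$ coordinates), combined with uniform continuity of $\beta$ over the compact group $G$. Second, the stated constant $10^7$ is sensitive to the number of rounds: with $72\sqrt2\cdot 2^3$ per adjacent round, three rounds already give about $5\times 10^8$.
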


The proof of this result requires several steps.

\begin{lemma} \label{JimEen}
For each $n \ge 0$, there is an isotopy $F_n=f_{n,t}:Q\times [0,1]\to Q$ with
\begin{enumerate}
\item $f_{n,0}=\id_{Q}$,
\item each $f_{n,t}$ is bi-Lipschitz with Lipschitz constant $ 10^7$,
\item for $n > 0$, $x\in Q$ and $t\in [0,1]$, $f_{n,t}(x)\equiv_n x$,\label{JimeenDeDerde}
\item for every $x\in Q$,
$$f_{n,1}(x) =
\begin{cases}
(x_2,x_1,x_4,x_3,\dots) & (n = 0).\\
(x_1,\dots,x_n,x_{n+2},x_{n+1},x_{n+4},x_{n+3},\dots) & (n > 0).
\end{cases}
$$
\end{enumerate}
\end{lemma}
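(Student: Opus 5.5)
We build $F_n$ coordinatewise out of the rotations of $\mathbb{J}^2$ from Subsection~\ref{changing}. Fix a continuous path $\rho_t$, $t\in[0,1]$, of rotations of $\mathbb{R}^2$ from $\rho_0=\id$ to the linear map $\rho_1$ with $(\phi\circ\rho_1\circ\phi^{-1})$ doing the job we want. Since the target $(x,y)\mapsto(y,x)$ is a reflection and not a rotation, we will not rotate directly to it. Instead we take the rotation through $90^\circ$ on the pair of coordinates, giving $(x,y)\mapsto(-y,x)$. We then correct the sign using a second, independent rotation on pairs of coordinates in the opposite direction, or alternatively we simply compose $\rho$ with a reflection path.

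The cleanest concrete choice is this. The map $(x,y)\mapsto(y,x)$ equals $(x,y)\mapsto(-y,x)$ followed by $(u,v)\mapsto(v,-u)$ restricted appropriately. More simply, we note $(y,x)=R\circ S(x,y)$, where $S(x,y)=(x,-y)$. Since we only need an isotopy ending at the swap, we handle signs by working with four coordinates at a time. On $\mathbb{J}^4$, the map $(a,b,c,d)\mapsto(b,a,d,c)$ has determinant $+1$, so it is a product of two planar rotations in suitable planes. In fact we will use the conjugates $g_t=\phi^{-1}\circ\rho_t\circ\phi$ on the coordinate planes $(x_1,x_3)$, $(x_2,x_4)$, and so on, or on the diagonal planes. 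Composing boundedly many (say three) such $g$'s acting on pairs of coordinates among a block of four realizes the swap on that block via a path. I expect this bookkeeping to be the fiddliest part, but it is finite-dimensional and elementary.

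Define $f_{n,t}$ to be the identity on the first $n$ coordinates, and on each successive block $(x_{n+4k+1},\dots,x_{n+4k+4})$ to act by the block isotopy $\Theta_t$ just described, independently of the other blocks. This gives (1), (3) and (4) immediately, and continuity in $(x,t)$ holds since each block map is continuous and the product topology is used.

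The main point is (2): a Lipschitz bound uniform in the metric $d(x,y)=\sum 2^{-i-1}|x_i-y_i|$. Since $d$ is a weighted $\ell_1$ sum, it suffices to bound the Lipschitz constant of each block map with respect to the restricted weighted norm $\sum_{i\in\text{block}}2^{-i-1}|x_i-y_i|$, uniformly in the block. Here Subsection~\ref{changing} applies: a rotation $g$ of $\mathbb{J}^2$ in coordinates $i\le j$ is Lipschitz with constant $72\sqrt2\cdot2^{3(j-i)}$ for $\|\cdot\|_{i,j}$. Within a block $j-i\le3$, so each factor has constant at most $72\sqrt2\cdot 2^9$ in its own pair of coordinates, and is the identity (constant $1$) on the others. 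The weighted norm restricted to a block is a sum over coordinate pairs. Composing at most three such factors gives a constant of roughly $(72\sqrt2\cdot2^9)^3$. This is too big for $10^7$, so I would refine the estimate: use only adjacent-index planes so that $j-i=1$, giving constant $72\sqrt2\cdot 8<10^3$ per factor and about $10^9$ for three factors. To reach $10^7$, I would instead sharpen the Lipschitz constant $6$ of $\phi$ using the weighted norm directly (the factor $2^{3(j-i)}$ is wasteful). Alternatively, I would realize the swap of $(x_{2k-1},x_{2k})$ with just two planar rotations, e.g.\ a $90^\circ$ rotation followed by a sign flip $x\mapsto-x$ achieved via a $180^\circ$ rotation in a plane shared with a neighbouring pair. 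This is the step I expect to be the main obstacle: the constant $10^7$ must be matched by choosing the cheapest decomposition and sharpening the constants. The same bound holds for $f_{n,t}^{-1}$, since inverses of rotations are rotations, and that gives bi-Lipschitz.
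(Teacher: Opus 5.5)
The construction you list at the very end as an alternative is exactly the paper's proof. It consists of a $90^\circ$ rotation of every pair $\mathbb{J}_{n+2i-1}\times\mathbb{J}_{n+2i}$, giving $(-x_{n+2},x_{n+1},-x_{n+4},x_{n+3},\dots)$, followed by a $180^\circ$ rotation of every pair $\mathbb{J}_{n+4i-3}\times\mathbb{J}_{n+4i-1}$ to repair the signs. As written, though, your proposal does not prove (2). You never commit to a decomposition. Your own estimate for the main block-of-four plan comes out near $10^9$. You then leave the reduction to $10^7$ open, suggesting it needs sharper constants for $\phi$. It does not.

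\textbf{The missing idea.} Not every planar rotation should count as a separate factor in a composition. Within one stage, all the rotations act on pairwise disjoint coordinate pairs, so they run simultaneously and each level map is a product over the pairs. The metric $d$ restricted to a pair $(i,j)$ is $\frac12\|\cdot\|_{i,j}$, and $d$ is the sum of these pair contributions plus those of untouched coordinates. So the Lipschitz constant of such a product map is the \emph{maximum} of the factors' constants, not their product. This maximum is uniform because $j-i$ is the same for every pair in a stage.

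Only two factors therefore enter:
\begin{itemize}
\item the first stage ($j-i=1$) has constant $72\sqrt2\cdot 2^3$;
\item the second stage ($j-i=2$) has constant $72\sqrt2\cdot 2^6$.
\end{itemize}
Every level of the concatenated isotopy, and every inverse level (inverses of rotations are rotations), is then Lipschitz with constant at most $72^2\cdot 2^{10}=5308416<10^7$.

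Your adjacent-plane idea also works once you count this way. Simultaneously rotate $(x_{n+4k+1},x_{n+4k+2})$ by $-90^\circ$ and $(x_{n+4k+3},x_{n+4k+4})$ by $+90^\circ$, giving $(b,-a,-d,c)$ on each block. Then rotate $(x_{n+4k+2},x_{n+4k+3})$ by $180^\circ$. The bound is $(72\sqrt2\cdot 2^3)^2\approx 6.6\cdot 10^5$.

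Two of your other suggestions do not work as stated. Rotations confined to the planes $(x_1,x_3)$ and $(x_2,x_4)$ can never produce the swap, because they preserve the position sets $\{1,3\}$ and $\{2,4\}$. Rotations in diagonal planes do not preserve $\mathbb{J}^4$. Conjugation by the planar $\phi$ does not handle them; they would need a new higher-dimensional radial map with its own Lipschitz estimate.
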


\begin{proof}
We obtain $F_n$ in two steps.
Let $\tau$ be the homeomorphism of $Q_n$ that rotates each coordinate pair $\mathbb{J}_{n+2i-1}\times
\mathbb{J}_{n+2i}$ counter-clockwise through $90^{\circ}$. Thus, $\tau(x_{n+1},x_{n+2},\dots) = (-
x_{n+2},x_{n+1},-x_{n+4},x_{n+3},\dots)$.
Then $\tau$ is isotopic to the identity on $Q_n$ by an isotopy which is a rotation on each coordinate pair $
\mathbb{J}_{n+2i-1}\times \mathbb{J}_{n+2i}$. Hence by Section \ref{changing}, each level of this isotopy is bi-Lipschitz
with Lipschitz constant $72\cdot 2^3\sqrt{2}$.
Next let $\rho$ be the homeomorphism of $Q_n$ that rotates each coordinate pair
$\mathbb{J}_{n+4i-3}\times \mathbb{J}_{n+4i-1}$  through $180^{\circ}$, thus $\rho(x_{n+1},x_{n+2},\dots) = (-
x_{n+1},x_{n+2},-x_{n+3},x_{n+4},\dots)$. Then $\rho$ is isotopic to the identity on $Q_n$ by an isotopy
which is a rotation on each coordinate pair $\mathbb{J}_{n+4i-3}\times \mathbb{J}_{n+4i-1}$.  Hence by
Section \ref{changing}, each level of this isotopy is bi-Lipschitz with Lipschitz constant
$72{\cdot}2^6\sqrt{2}$.
Observe that for every $x\in Q_n$,
$$
    (\rho\circ\tau)(x_{n+1},x_{n+2},x_{n+3},x_{n+4},\dots)=(x_{n+2},x_{n+1},x_{n+4},x_{n+3},\dots).
$$
If $n > 0$, consider the product of every function considered so far with the identity function on $\mathbb{J}^n$,
thus obtaining functions defined on $Q$.
Hence in all cases, $F_n$ is given by the two isotopies previously considered, and each of its levels is bi-
Lipschitz with Lipschitz constant $72^2\cdot 2^9\cdot 2=2^{16}\cdot 3^4=5308416 < 10^7$.
\end{proof}

The idea is to isotope $\beta$ to ``$\beta\times \alpha$'' by constructing an isotopy
$\Lambda:Q\times [0,1]\to Q$ connecting them through based-free actions.  A similar construction will
``isotope $\alpha$ to $\alpha\times \beta$''. And finally, we isotope ``$\beta\times\alpha$'' to $
``\alpha\times\beta$'', which completes the job.

We first define a sequence of actions $h_n : G \times Q\to Q$, $n\ge 1$, as follows:
$h_n(g,x) = (b_1,b_2,\dots, b_n,a_1,b_{n+1},a_2,b_{n+2},a_3,\dots),$
where
\begin{enumerate}
\item    $(a_1,a_2,a_3,\dots) = \alpha (g,(x_{n+1},x_{n+3},x_{n+5},\dots))$, and
\item    $(b_1,b_2,b_3,\dots) = \beta(g,(x_1,\dots,x_{n},x_{n+2},x_{n+4},\dots))$.
\end{enumerate}
It is clear that $h_n$ is a based-free action and $\Fix(h_n) = \{\overline 0\}$.

Observe that by the definition of $h_n$, for any $x\in Q$, there exists $c\in Q$ such that for all $g\in G$,
\begin{enumerate}[resume*]
\item $x\equiv_n c$, and
\item $h_n(g,x)\equiv_n \beta(g,c)$.
\end{enumerate}
Indeed, $c= (x_1,x_x,\dots,x_n,x_{n+2},x_{n+4}, \dots)$ does the job.

The function $h_1$ is our model for ``$\beta\times\alpha$'.

\begin{lemma}\label{eerstewenen}
For every $n \ge 1$, $\disp(h_n)\le \disp(\beta)  + \displ(\alpha)$.
\end{lemma}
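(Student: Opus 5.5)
The plan is a direct coordinatewise comparison using the explicit metric $d(x,y)=\sum_{i\ge 1}2^{-i-1}|x_i-y_i|$ on $Q$. Fix $g\in G$ and $x\in Q$, and set
$$u=(x_{n+1},x_{n+3},x_{n+5},\dots),\qquad c=(x_1,\dots,x_n,x_{n+2},x_{n+4},\dots),$$
so that $(a_k)_k=\alpha(g,u)$ and $(b_k)_k=\beta(g,c)$. It suffices to show $d(h_n(g,x),x)\le d(\beta(g,c),c)+d(\alpha(g,u),u)$, since the right-hand side is at most $\disp(\beta)+\disp(\alpha)$. Taking the supremum over $x$ and the maximum over $g$ then gives the lemma.

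First I split the series for $d(h_n(g,x),x)$ into the coordinates occupied by the $b$'s and those occupied by the $a$'s. Comparing the definition of $h_n$ with that of $x$:
\begin{itemize}
\item for $1\le i\le n$, position $i$ contributes $2^{-i-1}|b_i-c_i|$;
\item for $k\ge 1$, position $n+2k$ contributes $2^{-(n+2k)-1}|b_{n+k}-c_{n+k}|$;
\item for $k\ge 1$, position $n+2k-1$ contributes $2^{-(n+2k-1)-1}|a_k-u_k|$.
\end{itemize}
All terms are nonnegative and the series converge absolutely, so this regrouping is legitimate.

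Next I bound each group against the corresponding displacement by comparing weights term by term.
\begin{itemize}
\item The $b$-terms: in $d(\beta(g,c),c)=\sum_j 2^{-j-1}|b_j-c_j|$, the index $j\le n$ carries the same weight $2^{-j-1}$. The index $j=n+k$ carries weight $2^{-(n+k)-1}$, which is at least $2^{-(n+2k)-1}$. Hence the $b$-part is at most $d(\beta(g,c),c)$.
\item The $a$-terms: in $d(\alpha(g,u),u)=\sum_k 2^{-k-1}|a_k-u_k|$, the weight is $2^{-k-1}$. Since $n\ge 1$ gives $n+2k-1\ge k$, this is at least $2^{-(n+2k-1)-1}$. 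Hence the $a$-part is at most $d(\alpha(g,u),u)$.
\end{itemize}

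Adding the two bounds finishes the proof. There is no real obstacle. The only point needing care is the index bookkeeping in the interleaving, and in particular using the hypothesis $n\ge 1$ to get the weight inequality $n+2k-1\ge k$ for the $\alpha$-coordinates.
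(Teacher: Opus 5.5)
Your proof is correct and matches the paper's argument: you split $d(h_n(g,x),x)$ into the $b$-coordinates and the $a$-coordinates and compare weights term by term against $d(\beta(g,c),c)$ and $d(\alpha(g,u),u)$, exactly as the paper does. One small aside: the inequality $n+2k-1\ge k$ already follows from $k\ge 1$, so the hypothesis $n\ge 1$ is not actually needed at that step.
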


\begin{proof}
For every $g\in G$ and $x\in Q$, $d(h_n(g,x),x)$ is equal to
$$
    \sum_{i=1}^n 2^{-i-1} |b_i{-}x_i| + \sum_{j=1}^\infty 2^{-n-2j-1} |b_{n+j} {-} x_{n+2j}| +
    \sum_{j=1}^\infty 2^{-n-2j}|a_j {-} x_{n+2j-1}|,
$$
which is bounded by
$$
    \sum_{i=1}^n 2^{-i-1} |b_i{-}x_i| + \sum_{j=1}^\infty 2^{-n-j-1} |b_{n+j} {-} x_{n+2j}| + \sum_{j=1}^\infty
    2^{-j-1}|a_j {-} x_{n+2j-1}|.
$$
But this expression is by (1) and (2) in the definition of $h_n$ bounded by
$
    \disp(\beta)  + \displ(\alpha),
$ hence we are done.
\end{proof}

As before, set $\check f_{m,t}=\id_G\times f_{m,t}$.

\begin{corollary}\label{tweedewenen}
For all $n\ge 1$, $m \ge 0$ and $t\in [0,1]$, $\displ( f_{m,t}^{-1}\circ h_n \circ \check f_{m,t}) \le 10^7(\disp(\beta)  + \displ(\alpha))$.
\end{corollary}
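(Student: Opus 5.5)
Since $f_{m,t}$ is a homeomorphism, the conjugate $f_{m,t}^{-1}\circ h_n\circ \check f_{m,t}$ is again an action, and its displacement at $(g,x)$ is
\[
d\bigl(f_{m,t}^{-1}(h_n(g,f_{m,t}(x))),\,x\bigr).
\]
The plan is to rewrite this distance at a point $y=f_{m,t}(x)$, to which Lemma \ref{eerstewenen} applies. Fix $g\in G$, $x\in Q$, $n\ge 1$, $m\ge 0$ and $t\in[0,1]$, and put $y=f_{m,t}(x)$. Then $x=f_{m,t}^{-1}(y)$, so the quantity to bound is
\[
d\bigl(f_{m,t}^{-1}(h_n(g,y)),\,f_{m,t}^{-1}(y)\bigr).
\]

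By Lemma \ref{JimEen}(2), each level $f_{m,t}$ is bi-Lipschitz with constant $10^7$ with respect to the metric $d$ on $Q$. I take this to mean that both $f_{m,t}$ and $f_{m,t}^{-1}$ are $10^7$-Lipschitz; it is exactly what the proof of Lemma \ref{JimEen} delivers, since rotations of $\mathbb{J}^2$ were shown to be bi-Lipschitz in both directions in Section \ref{changing}. Hence
\[
d\bigl(f_{m,t}^{-1}(h_n(g,y)),\,f_{m,t}^{-1}(y)\bigr)\le 10^7\, d\bigl(h_n(g,y),\,y\bigr).
\]
Lemma \ref{eerstewenen} then bounds the right side by $10^7(\disp(\beta)+\disp(\alpha))$. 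Taking the supremum over $x$ and the maximum over $g$ gives the corollary.

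The only delicate point is the Lipschitz estimate for the infinite-dimensional map $f_{m,t}^{-1}$ in the weighted metric $d(x,y)=\sum_i 2^{-i-1}|x_i-y_i|$. Each level of $F_m$ is a composition of two product maps. Each factor acts on a pair of coordinates $(i,j)$ with $j-i\le 2$, and on that pair it is bi-Lipschitz with respect to $\|\cdot\|_{i+1,j+1}$ with the constant computed in Section \ref{changing}. Since $d$ is the sum of these pairwise weighted norms, together with the untouched coordinates in the first $m$ places, the constant for each factor passes to all of $Q$. Composing the two factors multiplies the constants, giving less than $10^7$. This is exactly the content of Lemma \ref{JimEen}(2), and I will invoke it rather than redo it.
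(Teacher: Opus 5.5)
Your proposal is correct and is essentially the paper's proof: set $p=f_{m,t}(x)$, use the $10^7$-Lipschitz bound for $f_{m,t}^{-1}$ from Lemma \ref{JimEen}(2) to reduce to $d(p,h_n(g,p))$, and then apply Lemma \ref{eerstewenen}. Your extra paragraph re-deriving the Lipschitz estimate for $f_{m,t}^{-1}$ (coordinate pairs with gap at most $2$, weighted norms summing to $d$) is accurate, but citing Lemma \ref{JimEen}(2) is enough.
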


\begin{proof}
Take arbitrary $g\in G$ and $x\in Q$, and put $p = f_{m,t}(x)$, respectively, $q = h_n(g,f_{m,t}(x))= h_n(g,p)$.
By Lemmas~\ref{JimEen} and~\ref{eerstewenen},
$$
\begin{array}{ll}
d(x,f_{m,t}^{-1}\circ h_n\circ\check f_{m,t}(g,x)) &= d( f_{m,t}^{-1}(p),  f_{m,t}^{-1}(q))\\
                                                    &\le 10^7 d(p,q) \\
                                                   &\le 10^7(\displ(\beta) + \displ(\alpha)),
\end{array}
$$
as desired.
\end{proof}

We will now define the isotopy $\Lambda = (\lambda_t)_{t\in [0,1]}$ connecting $\beta$ and ``$
\beta\times\alpha$'' ($= h_1$).

We put $\lambda_0 = \beta$, and for $n \ge 1$, $\lambda_{\frac{1}{n}} = h_n$.
For $n \ge 1$, let $F_n : Q \times [\frac{1}{n+1},\frac{1}{n}] \to Q$ be the isotopy given by
Lemma~\ref{JimEen} (parametrize $[\frac{1}{n+1},\frac{1}{n}]$ by $s(t)=\frac{-1}{n(n+1)}t + \frac{1}{n}$).
It will be convenient to denote $F_n$ by $(f_{n,t})_{t\in [\frac{1}{n+1},\frac{1}{n}]}$ and $\id_G\times F_n$
by $\check F_n=(\check f_{n,t})_{t\in [\frac{1}{n+1},\frac{1}{n}]}$. Define $\lambda_t$ for $t\in [\frac{1}{n+1},\frac{1}{n}]$ as
follows:
$$
    \lambda_t =  f_{n,t}^{-1}\circ h_{n+1}\circ \check f_{n,t}.
$$
Clearly,
$
    \lambda_{\frac{1}{n+1}}= h_{n+1}.
$
We claim that $\lambda_{\frac{1}{n}} = h_n$. To see this,
take arbitrary $g\in G$ and $x\in Q$. Then
$
    f_{n,\frac{1}{n}} (x) = (x_1,x_2,\dots,x_n,x_{n+2}, x_{n+1}, x_{n+4}, x_{n+3},\dots),
$
and so,
$$
    h_{n+1}\circ \check f_{n,\frac{1}{n}} (g,x) = (b_1,b_2,\dots, b_{n+1},a_1,b_{n+2},a_2,b_{n+3},a_3,\dots),
$$
where
\begin{enumerate}
\item[$(1')$]    $(a_1,a_2,a_3,\dots) = \alpha (g,(x_{n+1},x_{n+3},x_{n+5},\dots))$, and
\item[$(2')$]    $(b_1,b_2,b_3,\dots) = \beta(g,(x_1,\dots,x_{n},x_{n+2},x_{n+4},\dots))$.
\end{enumerate}
And so, indeed,
$$
    (f_{n,\frac{1}{n}})^{-1}\circ h_{n+1}\circ \check f_{n,\frac{1}{n}} (g,x) = (b_1,b_2,\dots, b_n,
    a_1,b_{n+1},a_2,b_{n+2},a_3,\dots)= h_n(x).
$$
Therefore, $\Lambda$ is well-defined and continuous on $(G\times Q)\times (0,1]$.

It remains to show that if $(t_m)$ is an arbitrary sequence in $(0,1]$ converging to 0, then the sequence $
(\lambda_{t_m})$ converges uniformly to $\beta$.
 To this end, fix $\varepsilon > 0$.
 By compactness of $G$, there exists $\delta > 0$ such that if $d(x,y) < \delta$ then for all $g\in G$, $d(\beta(g,x),\beta(g,y)) < \frac{1}{2}\varepsilon$. Hence there exists $M$ such that if $x\equiv_M y$ then $d(x,y) < \frac{\varepsilon}{2}$ and for all $g\in G$, $d(\beta(g,x),\beta(g,y))< \frac{\varepsilon}{2}$. There also
 exists $N$ such that if $m \ge N$, then $t_m < \frac{1}{M}$.
 Let $m \ge \max(N,M)$ and $g\in G$ and $x\in
 Q$ be arbitrary. There is $n\ge M$ such that $t_m \in [\frac{1}{n+1},\frac{1}{n}]$.  By (3) and (4) above, we may pick $c\in Q$ such that
 for $p=f_{n,t_m}(x)$ and $c$ we have $p\equiv_{n{+}1} c$ and
 $h_{n+1}(g,p)\equiv_{n+1} \beta(g,c)$.
 By Lemma~\ref{JimEen}(\ref{JimeenDeDerde}), $f_{n,t_m}(x)\equiv_n x$, hence $x\equiv_n c$.
 Again by Lemma~\ref{JimEen}(\ref{JimeenDeDerde}),
 $$
    \lambda_{t_m}(g,x)= f_{n,t_m}^{-1} (h_{n+1}(g,f_{n,t_m}(x)))= f_{n,t_m}^{-1} (h_{n+1}(g,p)) \equiv_n \beta(g,c).
 $$
 Now $d(\lambda_{t_m}(g,x), \beta(g,c)) < \frac{\varepsilon}{2}$ and $d(\beta(g,c),\beta(g,x)) <
\frac{\varepsilon}{2}$, so we are done by the triangle inequality.

We conclude that we can connect $\beta$ and ``$\beta\times\alpha$'' by a ``convenient'' isotopy. That is, an
isotopy through based-free actions whose fixed point sets all are equal to $\{\overline 0\}$ and with all levels having displacement at most $10^7(\disp(\beta) + \displ(\alpha))$.

Our model for ``$\beta\times\alpha$'' is the map $h_1 : G\times Q\to Q$ that we defined as follows:
$
    h_1(g,x_1,x_2,\dots) = (b_1,,a_1,b_2,a_2,b_3,a_3,\dots),
$
where
\begin{enumerate}
\item    $(a_1,a_2,a_3,\dots) = \alpha (g,(x_{2},x_{4},x_{6},\dots))$, and
\item    $(b_1,b_2,b_3,\dots) = \beta(g,(x_1,x_{3},x_{5},\dots))$.
\end{enumerate}
Similarly, the model for ``$\alpha\times\beta$'' is the map $g_1 : G\times Q\to Q$ defined by:
$
    g_1(g,x_1,x_2,\dots) = (d_1,c_1,d_2,c_2,d_3,\dots),
$
where
\begin{enumerate}
\item    $(d_1,d_2,d_3,\dots) = \alpha(g,(x_1,x_{3},x_{5},\dots))$, and
\item    $(c_1,c_2,c_3,\dots) = \beta (g,(x_{2},x_{4},x_{6},\dots))$.
\end{enumerate}
Let $F = F_0$ be the isotopy of Lemma~\ref{JimEen}, and define $S :  (G\times Q)\times [0,1]\to Q$ by
$$
    S_t = F_t^{-1}\circ h_1 \circ \check F_t.
$$
Then $S_0= h_1$ and $S_1 = g_1$. Clearly, $S$ is ``convenient'.

Since we can isotope ``$\alpha$ to $\alpha\times \beta$'' in the same way as we isotoped ``$\beta$ to $
\beta\times\alpha$', we are done with the proof of Theorem~\ref{TheoremOnIsotopiesSpecialCase}.

We now come to the proof of Theorem~\ref{TheoremOnIsotopies}. It is based on an interesting idea in Hohti and Junnila~\cite{hohti}.

\begin{proof}[Proof of Theorem~\ref{TheoremOnIsotopies}]
The first and the last statements in the theorem are consequences of
Theorem~\ref{TheoremOnIsotopiesSpecialCase} and the homogeneity of $Q$. So we are left with the $
\varepsilon-\delta$ statement.

Let $\mathcal{H}(Q)$ denote the group of all homeomorphisms of $Q$ endowed with the usual compact-open
topology. For every $p\in Q$, let $\gamma_p :  \mathcal{H}(Q)\to Q$ be the evaluation at $p$. That is, $
\gamma_p(f) = f(p)$ for $f\in \mathcal{H}(Q)$. Since $Q$ is homogeneous by Keller~\cite{keller}, the celebrated
Effros Theorem from~\cite{effros} implies that $\gamma_p$ is an open continuous surjection for every $p$ (see
also~\cite{noteeffros}).

Consider the point $\overline 0$ in $Q$.  By~\cite[2.10; Proposition 18]{bourbaki61}, there is a compact subset
$L$ of $\mathcal{H}(Q)$ such that $\gamma_{\overline 0}(L) = Q$ (the existence of such an $L$ is the main
tool in~\cite{hohti}). Put $K = L^{-1}$, and observe that for every $x\in Q$ there exists $f\in K$ such that $f(x) =
\overline 0$. Since $\mathcal{H}(Q)$ is contractible (essentially due to Wong~\cite{Wong67}, see
also~\cite{Torunczyk77} and~\cite{Ferry77}), there is a homotopy $H :  \mathcal{H}(Q)\times [0,1]\to
\mathcal{H}(Q)$ such that $H_0 = \id_{\mathcal{H}(Q)}$ and $H_1 = \id_Q$. Let $K'$ be the compact set
$H(K\times [0,1])$ of $\mathcal{H}(Q)$.

Let $\varepsilon > 0$. By compactness of $K'$ there is $\delta > 0$ such that for all $x,y\in Q$ with $d(x,y) <
\delta$, $d(f(x),f(y)) < \varepsilon$ for each $f\in K'$.

 Now let $G$ be a compact group which acts based-freely on $Q$ by the action $\beta :  G\times Q \to Q$,
 such that $\Fix(\beta)=\{y\}$ and $\displ(\beta) < \delta$. Pick $f\in K$ such that $f(y) = \overline 0$. Define an isotopy $S :  Q\times [0,1]\to Q$ by $S_t = H_{1-t}(f)$. Then $S_0 = \identity_Q$ and $S_1 = f$. For every
 $t\in [0,1]$, define an action $\gamma_t :  G\times Q\to Q$ by $\gamma_t(g,x) = S_t ( \beta(g,S_t^{-1}(x)))$.
 Then $\gamma_0 = \beta$, and each $\gamma_t$ is a based-free $G$-action with fixed point set $\{S_t(y)\}$.
 Hence $\gamma_1$ is a a based-free $G$-action with fixed point set $\{S_1(y)\} = \{f(y)\} = \{\overline 0\}$. We
 check the displacements of the various $\gamma_t$'s.

To this end, fix $t\in [0,1]$, $x\in Q$, and $g\in G$. We compare $S_t \circ \beta(g,S_t^{-1}(x))$ and $x = S_t
(S_t^{-1}(x))$. Now by assumption,
$$
    d(\beta(g,S_t^{-1}(x)), S_t^{-1}(x)) \le \displ(\beta) < \delta,
$$
and so
$$
    d(S_t(\beta(g,S_t^{-1}(x)), S_t(S_t^{-1}(x)))<\varepsilon.
$$
Hence the displacements of the $\gamma_t$'s does not exceed $\varepsilon$. Since our choice of $\delta> 0$
does not depend on $\beta$, we are done by applying the special case
Theorem~\ref{TheoremOnIsotopiesSpecialCase}.
\end{proof}

\begin{question}
Can Theorem~\ref{TheoremOnIsotopies} be generalized to semifree actions of compact groups on $Q$ whose
fixed point sets are homeomorphic Z-sets?
\end{question}

\section{From based-free to semifree actions}

We begin by noting the following which was used in the introduction.

\begin{proposition}\label{product}  Suppose that $G$ is a compact group that acts non-trivially and semifreely
on $Q$.  If $X$ is a Z-set in $Q$ that is homeomorphic to a product of fixed point sets of semifree $G$-actions
on $Q$ then there is a semifree action of $G$ on $Q$ with fixed point set $X$.
\end{proposition}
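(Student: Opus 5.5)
The plan is to realize $X$ as the fixed point set of a diagonal product action, and then to move that fixed point set onto the given copy of $X$ by a homeomorphism, using the Z-set machinery.

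\emph{Setting up the product.} Suppose $X \approx \prod_{i\in I} A_i$, where $I$ is finite or countably infinite, and each $A_i = \Fix(\alpha_i)$ for a semifree action $\alpha_i$ of $G$ on $Q$. Form the diagonal action $\alpha = \prod_i \alpha_i$ on $P = \prod_{i\in I} Q$, given by $\alpha(g,(x_i)) = (\alpha_i(g,x_i))$. Then $P$ is a Hilbert cube: if $I$ is finite this is immediate, and if $I$ is countable it follows from \cite{West70}. For $g \neq e$, a point $(x_i)$ is fixed by $\alpha^g$ exactly when each $x_i$ is fixed by $\alpha_i^g$. By semifreeness of each $\alpha_i$, this happens exactly when $x_i \in A_i$ for every $i$. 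So $\alpha$ is semifree with $\Fix(\alpha) = \prod_i A_i \approx X$. If $I=\emptyset$, interpret $X$ as a point; in that case use a based-free action, which exists because $G$ acts nontrivially and semifreely on $Q$ (Theorem~\ref{point}).

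\emph{Making the fixed point set a Z-set.} The fixed point set $\prod A_i$ need not be a Z-set in $P$; for instance $\alpha_i$ might be trivial off a small set. To repair this, take one further factor: a based-free action $\sigma$ of $G$ on $Q$ with $\Fix(\sigma)=\{\overline 0\}$ (Theorem~\ref{point} again). Replace $\alpha$ by $\alpha\times\sigma$ on $P\times Q \approx Q$. The fixed point set becomes $\prod A_i \times \{\overline 0\} \approx X$. A set of the form $Y \times \{pt\}$ is a Z-set in $P\times Q$: maps $f=(f_1,f_2)$ from $Q$ can be pushed off it by perturbing the second coordinate $f_2$ slightly off the point, which is possible since points of $Q$ are Z-sets. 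So, after identifying $P\times Q$ with $Q$ via some homeomorphism $\theta$, we get a semifree action $\beta$ on $Q$ whose fixed point set $Z$ is a Z-set homeomorphic to $X$.

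\emph{Transporting to the given $X$.} Let $e: Z \to X$ be a homeomorphism. Both $Z$ and $X$ are Z-sets in $Q$. By Anderson's homeomorphism extension theorem (the Z-set unknotting form of the Homeomorphism Extension Theorem quoted in Section~\ref{infdim}; in $Q$ any homeomorphism between Z-sets is homotopic to the inclusion since $Q$ is an \AR), $e$ extends to a homeomorphism $f$ of $Q$. Then the conjugate $f\circ\beta\circ\check f^{-1}$ is semifree with fixed point set $f(Z)=X$, as noted in Section~\ref{preliminaries}.

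\emph{Main obstacle.} The only delicate point is ensuring the fixed point set is a Z-set, which is handled by the extra based-free factor. A secondary check is that countable products of semifree actions remain continuous actions, which holds because the product topology is coordinatewise.
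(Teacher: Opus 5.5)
Your proof is correct and follows the same route as the paper: realize $X$ as the fixed point set of the diagonal product of the given semifree actions on a product of Hilbert cubes, then carry that fixed point set onto the given copy of $X$ with the Homeomorphism Extension Theorem. Your extra based-free factor, which makes the product fixed point set a Z-set, supplies a hypothesis that the paper's one-line appeal to the Homeomorphism Extension Theorem needs but does not verify.
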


\begin{proof}  Using the Homeomorphism Extension Theorem (see Section \ref{infdim}), we may assume that $Q = \Pi_{i\ge 1}Q_i$ and
that $X = \Pi_{i\ge 1}X_i$ where each $Q_i$ is a Hilbert cube and each $X_i\subseteq Q_i$ is the fixed point set of
a semifree action $\alpha_i$ of $G$ on $Q_i$.  Now the diagonal action $\alpha$ of $G$ on $Q$ is
semifree with fixed point set $X$.
\end{proof}

Next, we extend Theorem ~\ref{point} to acyclic \ANR's and compact Lie groups.

\begin{theorem}\label{acyclicANR}
Let $G$ be a nontrivial compact Lie group.
If $A$ is an acyclic compact
\ANR\ embedded in $Q$ as a Z-set, then $A$ is the fixed point set of a semifree \ANR\ action $\alpha$ of $G$ on $Q$.
\end{theorem}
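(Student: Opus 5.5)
The plan is to imitate the join construction sketched in the introduction, with $G$ in place of $\mathbb{Z}_n$, and then transfer the result to the given Z-set by the Homeomorphism Extension Theorem. Here ``acyclic'' is understood as having the integral homology of a point.

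\emph{The join.} Form the join $Y = G \ast A = G\times \mathbb{I}\times A/\sim$, where $(g,0,a_1)\sim(g,0,a_2)$ and $(g_1,1,a)\sim(g_2,1,a)$. Let $G$ act by left translation on the first coordinate. This action fixes exactly the end $A\times\{1\}\cong A$ and is free elsewhere, so it is semifree with fixed point set a copy of $A$.

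\emph{$Y$ is an \AR.} Since $G$ is a compact Lie group it is a finite complex, as in Theorem \ref{standard action}. The join of two compact \ANR's is a compact \ANR, so $Y$ is one.
\begin{itemize}
\item[(a)] Suppose $G$ is not connected. Then $G \ast A$ contains $\Sigma A$ and is built from copies of cones over $A$ glued along $A$, which gives simple connectivity.
\item[(b)] Suppose $G$ is connected. Then the join of a connected space with an arbitrary space is simply connected.
\end{itemize}
In both cases $\tilde H_*(G\ast A)\cong \tilde H_{*-1}(G)\otimes\tilde H_*(A)\oplus\operatorname{Tor}$ terms by Milnor's formula, and this vanishes because $A$ is acyclic. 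By Hurewicz and Whitehead, a simply connected acyclic \ANR\ is contractible, hence $Y$ is an \AR. Moreover $Y$ is a $G$-\ANR: the action is locally trivial away from the fixed end, and near it the cone structure in the $\mathbb{I}$-direction supplies equivariant neighborhood retractions, just as for $C(G)$ in Theorem \ref{standard action}.

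\emph{Passing to $Q$.} Take the product $Y\times Q$ with $G$ acting on $Y$ only. By Edwards' Theorem this is a Hilbert cube, but its fixed point set is $A\times Q$, not $A$. To cut the fixed set down, combine with a based-free action. Let $\sigma$ be the standard based-free action of $G$ on $Q(G)=C(G)^\infty$ from Theorem \ref{standard action}, with fixed point $\star$. Then the diagonal action on $Y\times Q(G)$ has fixed point set $A\times\{\star\}\approx A$, and it is an \ANR\ action, being a product of $G$-\ANR\ actions. The space $Y\times Q(G)$ is a Hilbert cube, since $Y$ is a compact \AR, so $Y\times Q$ is a Hilbert cube by Edwards' Theorem and $Q(G)\approx Q$.

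Next I check that $A\times\{\star\}$ is a Z-set. This holds because $\{\star\}$ is a Z-set in $Q(G)$: points are Z-sets in $Q$, and the product of a Z-set with a compact space remains a Z-set in the product. So there is a homeomorphism $\Phi: Y\times Q(G)\to Q$.

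\emph{Transfer to the given embedding.} Both $\Phi(A\times\{\star\})$ and the given $A\subseteq Q$ are Z-embedded copies of $A$. By the Homeomorphism Extension Theorem there is a homeomorphism $\Psi$ of $Q$ carrying one onto the other, with any prescribed identification. Conjugating the action by $\Psi\circ\Phi$ gives a semifree \ANR\ action with fixed point set exactly the given $A$.

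\emph{Main obstacle.} The key step is the contractibility of $G\ast A$. Specifically, one needs the homology computation for joins with integral acyclicity (where Tor terms cause no trouble because $\tilde H_*(A;\mathbb{Z})=0$), together with simple connectivity when $G$ is finite. I would handle simple connectivity by van Kampen: $G\ast A$ is $\bigcup_{g} C_g(A)$ glued to the cone point set, in other words a wedge-like union of cones over $A$ identified along $A\times\{1\}$ and connected through arcs. Equivalently, $G\ast A \simeq \Sigma(G\wedge A)$, which is simply connected because $G\wedge A$ is connected ($A$ is connected since it is acyclic). The $G$-\ANR\ verification near the fixed end is the secondary technical point. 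It follows the cone argument already used in Theorem \ref{point}.
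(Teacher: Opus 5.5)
Your proposal is correct and follows the paper's proof step for step: the join $A\ast G$ with $G$ acting by left translation on itself, contractibility of this compact \ANR\ via simple connectivity, acyclicity and Whitehead, the diagonal action on $Y\times Q(G)$ whose fixed point set $A\times\{\star\}$ is a Z-set, and finally the Homeomorphism Extension Theorem. The only difference is the contractibility check: the paper uses van Kampen, Mayer--Vietoris and K\"unneth on $Y=(A\times C(G))\cup(C(A)\times G)$, with a separate induction over components when $G$ is disconnected, whereas your use of Milnor's join formula and of the simple connectivity of the join of the connected space $A$ with any nonempty space handles all $G$ at once. This makes your case split (a)/(b) unnecessary, and your description in (a) is in any case accurate only for finite $G$.
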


\begin{proof}
Let $Y = A\ast G$ and denote by $A'$ the image of $A\times \{-1\} \times G$ in $Y$.  It suffices to prove that $Y \in \AR\ $ since then

\begin{enumerate}

\item $A'$ is the fixed point set of the semifree \ANR\ action $\beta = \identity \ast \beta'$ on $Y$, where
$\beta'$ is the left translation action of $G$ on itself;

\item $A'\times \{v\}$ is the fixed point set of the diagonal action $\gamma = \beta \times \alpha_G$ on
$Y \times Q(G)$, where $v$ is the fixed point of the standard based-free action $\alpha_G$ of $G$ on
$Q(G)$;

\item $Y \times Q(G)$ is a Hilbert cube;

\item $A' \times \{v\}$ is a Z-set in $Y \times Q(G)$;

\item there is a homeomorphism $h:Y\times Q(G)\to Q$ such that $h(A' \times \{v\}) = A$ (by the Homeomorphism Extension Theorem); and

\item the $G$-action $\alpha$ on $Q$ induced by $h$ is a semifree \ANR\ action
with fixed point set $A$.
\end{enumerate}

Now $Y = B \,\cup \, C$ where $B = A\times C(G), C = C(A) \times G,
\text{ and } B \cap C = A\times G = D$. Because $A$ and $G$ are \ANR's, so are $B, C, \text{ and }D$.
Hence, $Y\in\ANR\ $ \cite{borsuk:retracts}.  Therefore we only need to show that it is
contractible.  Since  \ANR's have the homotopy types of CW complexes \cite{Hanner}, this follows from the Whitehead Theorem
\cite[Corollary 4.33]{Hatcher01} once we verify that it is simply-connected and acyclic in integral
homology.

First, assume that $G$ is connected.  Then the van Kampen Theorem applies to show that $Y$ is
simply-connected.  Choose a basepoint $y= (a_0,g_0) \in A \times G$.
Then $\pi_1(D,y) \cong \pi_1(B,y)\times \pi_1(C,y)$.  The inclusion homomorphisms
$i_{\ast}:\pi_1(D,y)\to \pi_1(B,y)$ and $j_{\ast}:\pi_1(D,y\to \pi_1(C,y)$ induce coordinate
projections of this product, so the inclusion $k_{\ast}:\pi_1(D,\ast)\to \pi_1(Y,\ast)$ is trivial.  Therefore
$\pi_1(Y,\ast)$, the amalgamated free product of $\pi_1(B,\ast)$ and $\pi_1(C,\ast)$, is trivial.

In integral homology, the Mayer-Vietoris theorem gives a long exact sequence
\begin{multline*}
\dots\to H_n(A\times G)\overset{\phi}\to H_n(A\times C(G))\oplus H_n(C(A)\times G)\to \\ H_n(Y)
\overset{\partial}\to H_{n-1}(A\times G)\to\dots
\end{multline*}
Since $A$ is acyclic, the K\"unneth formula \cite[Theorem 3B.6]{Hatcher01} gives
$H_n(A\times G)\cong H_n(G), H_n(A\times C(G) =0 \text{ for }n \neq 0, \text{ and }H_n(C(A)\times G)\cong H_n(G)$, so $\phi$ is an isomorphism and $Y$ is acyclic.

In the case that $G$ is not connected, the maximal connected subgroup $G_0$ of the identity is closed and normal and we
have the exact sequence $1\to G_0\to G\to F\to 1$, where $F$ is finite.  Thus, as a topological space,
$G \approx G_0 \times F$ is the disjoint union of $|F|$ copies of $G_0$, one for each element of $F$,
and $Y$ is the union of $|F|$ copies of $A\ast G_0$ identified along their common copy of $A'$.  An induction on the cardinality of $F$ shows that $Y$ is contractible.
\end{proof}

\begin{theorem}\label{semifreeapprox} Let $G$ be a compact group.  If there is a nontrivial semifree action $\alpha$ of $G$ on a Keller space with
fixed point set $A$, then for each Keller space $X$
\begin{enumerate}

\item  there is a semifree $G$-action $\beta$ on $X$
with fixed point set a Z-set homeomorphic to $A$,

\item if $\gamma$ is a semifree action of $G$ on $X$ with fixed point set homeomorphic to $A$, $\beta$ may be taken to be arbitrarily close to $\gamma$, and

\item if $\alpha$ is a $G$-\ANR\ action, $\beta$ may be required to be a $G$-\ANR\ action.
\end{enumerate} \end{theorem}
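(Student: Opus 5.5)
Since all Keller spaces are homeomorphic to $Q$, we work throughout with $X = Q$.

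For (1), we take the given action $\alpha$ on $Q$ with $\Fix(\alpha)=A$ and form the product action $\alpha\times\sigma$ on $Q\times Q(G)$. Here $\sigma$ is a based-free action of $G$ on a Hilbert cube with fixed point $v$. It exists by Theorem~\ref{point}, since $\alpha$ is a nontrivial semifree action on a compact $Q$-manifold. The product action is semifree: a point is fixed by $g\neq e$ iff both coordinates are fixed, and by semifreeness of both factors this happens iff the point lies in $A\times\{v\}$, which is then fixed by all of $G$. Its fixed point set is $A\times\{v\}$. This set is a Z-set in $Q\times Q(G)$, since $\{v\}$ is a Z-set in the Hilbert cube $Q(G)$ and a product with a Z-set factor is a Z-set. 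Finally, $Q\times Q(G)\approx Q$, and transporting the action by a homeomorphism gives $\beta$ on $X$ with fixed point set a Z-set homeomorphic to $A$.

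For (3), if $\alpha$ is a $G$-\ANR\ action, we choose $\sigma$ to be a $G$-\AR\ action, again by Theorem~\ref{point}. The product of a $G$-\ANR\ and a $G$-\AR\ is a $G$-\ANR, and conjugation by a homeomorphism preserves this property.

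For (2), let $\gamma$ act on $X=Q$ with $\Fix(\gamma)=A_0\approx A$. We apply the same product trick to $\gamma$ itself, but with a small based-free action. By Theorem~\ref{derdestelling}, for every $\eta>0$ there is a based-free action $\sigma_\eta$ on $Q$ with $\disp(\sigma_\eta)<\eta$, and by homogeneity we may take its fixed point to be $\overline 0$. We write $Q=Q\times Q_n$ via the coordinate splitting $x\mapsto((x_1,\dots,x_n,x_{n+2},x_{n+4},\dots),(x_{n+1},x_{n+3},\dots))$, exactly as in the construction of $h_n$ in the proof of Theorem~\ref{TheoremOnIsotopiesSpecialCase}. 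On the first factor we let $\gamma$ act, and on the second we let $\sigma_\eta$ act. The resulting action $\beta_n$ is semifree, with fixed point set the image of $A_0\times\{\overline 0\}$, which is a Z-set homeomorphic to $A$.

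We then show $\beta_n$ is close to $\gamma$. The isotopy of Lemma~\ref{JimEen} conjugates this splitting back toward the identity identification, and the uniform convergence argument used there shows that $\beta_n\to\gamma$ uniformly as $n\to\infty$ and $\eta\to 0$. Concretely, the first $n$ coordinates of $\beta_n(g,x)$ agree with those of $\gamma(g,c)$ for some $c\equiv_n x$, which is the estimate used in the proof of Theorem~\ref{TheoremOnIsotopiesSpecialCase}.

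The main obstacle is that conjugating by the shuffle homeomorphism changes the action, so we must compare $\beta_n$ directly with $\gamma$ rather than with a conjugate of $\gamma$. The $\equiv_n$ argument handles this: for large $n$ the tail coordinates contribute less than $2^{-n}$, and uniform continuity of $\gamma$ on the compact set $G\times Q$ controls the head coordinates. If some Keller space is meant other than $Q$ itself, we transport everything by a fixed homeomorphism, and closeness is preserved by uniform continuity. The $G$-\ANR\ refinement in (2) again follows by choosing $\sigma_\eta$ to be a $G$-\AR\ action, which requires checking that the small actions of Theorem~\ref{derdestelling} can be chosen $G$-\AR. This is the one point where I would need to re-examine the cone construction in that proof.
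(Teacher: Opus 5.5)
Your proposal is correct and follows the paper's proof. As there, you take the diagonal action with a based-free action from Theorem~\ref{point} to get the Z-set fixed point set $A\times\{x_0\}$, and for (2) you conjugate by a coordinate-reindexing homeomorphism $Q\times Q\to Q$ close to the projection onto the first factor; your $\equiv_n$ estimate makes this explicit.

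Note that the estimate gives $d(\beta_n(g,x),\gamma(g,x))\le 2^{-n}+\omega(2^{-n})$, where $\omega$ is a modulus of uniform continuity of $\gamma$ on $G\times Q$, and this bound does not depend on the action on the second factor. So neither Theorem~\ref{derdestelling} nor Lemma~\ref{JimEen} is needed, and the point you left open disappears: any based-free $G$-\AR\ action from Theorem~\ref{point} serves, and gives a $G$-\ANR\ $\beta$ whenever $\gamma$ itself is a $G$-\ANR\ action.
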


\begin{proof}  Since all Keller spaces are Hilbert cubes, we may assume that $X = Q$.  Let $\alpha$ be a
semifree action of $G$ on $Q$ with fixed point set $A$.  By Theorem \ref{point}, there is a based-free action $
\delta$ of $G$ on $Q$.  Let $x_0$ be the fixed point of $\delta$.  Then the diagonal action $\varepsilon =
(\alpha,\delta)$ on $Q\times Q$
is semifree with fixed point set $A\times \{x_0\}$, which is a Z-set.  By manipulating the indexing of the
coordinates, we may approximate the projection of $Q\times Q$ onto the first factor by a homeomorphism as
closely as desired.
\end{proof}

We now generalize Theorem~\ref{derdestelling} to semifree actions with nontrivial fixed point sets. In the proof
of this generalization, we need the full strength of Theorem~\ref{derdestelling}.

\begin{theorem}\label{derdestellingPartTwo}
Let $\alpha$ be a semifree action of a compact group $G$ on $Q$ such that $\Fix(\alpha)$ is a Z-set.
There is a semifree action $\beta$ of $G$ on $Q$ arbitrarily close to the trivial action such that $
\Fix(\alpha)=\Fix(\beta)$.
\end{theorem}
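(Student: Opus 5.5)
Let $A=\Fix(\alpha)$ and fix $\varepsilon>0$. The plan is to replace $\alpha$ by a product action in which the $\alpha$-part is pushed into far-out coordinates, so that all displacement comes from a small based-free factor. Then we transport the fixed point set back onto $A$ with a homeomorphism of small displacement. This last step is possible because $A$ is a Z-set.

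\emph{Step 1: a small model action.} Since $\alpha$ is nontrivial with $A\neq Q$, Theorem~\ref{point} shows that $G$ acts based-freely on $Q$. Theorem~\ref{derdestelling} then gives a based-free action $\sigma$ of $G$ on $Q$ with $\disp(\sigma)$ as small as we like; by homogeneity we may take $\Fix(\sigma)=\{\overline 0\}$.

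Now factor $Q=\mathbb{J}^n\times Q_n$, and split $Q_n$ by interleaving coordinates as $Q_n\approx Q'\times Q''$, with $Q'$ and $Q''$ each a copy of $Q$. Define $\beta_0$ to be the identity on $\mathbb{J}^n$, to be $\alpha$ on $Q'$, and to be $\sigma$ on $Q''$. Then
\[
\Fix(\beta_0)=\mathbb{J}^n\times A\times\{\overline 0\},
\]
because a point is fixed by all $g\neq e$ exactly when both of its $Q'$- and $Q''$-coordinates are fixed, and $\alpha$ and $\sigma$ are semifree. So $\beta_0$ is semifree. For $n$ large, the part of the metric $d$ carried by coordinates beyond $n$ is at most $2^{-n}$, so $\disp(\beta_0)\le 2^{-n}\cdot(\text{const})$.

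\emph{Step 2: a model set homeomorphic to $A$.} The set $\mathbb{J}^n\times A\times\{\overline 0\}$ is not homeomorphic to $A$, so the $\mathbb{J}^n$-factor must be removed. The cleanest fix is to let the first factor also be acted on: put the diagonal action $\alpha\times\sigma$ on all of $Q\approx Q\times Q''$, where $Q''$ is built from coordinates beyond $n$. This gives $\Fix=A\times\{\overline 0\}$, but the $\alpha$-part then moves early coordinates and its displacement is not small. The remedy follows Theorem~\ref{semifreeapprox}. Embed $A$ in the first factor via the Z-embedding $e(a)=(a,\overline 0)$, and observe that the projection $\pi\colon Q\times Q''\to Q$ can be approximated by homeomorphisms. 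What we need instead is a \emph{conjugating} homeomorphism of small displacement carrying $A\times\{\overline 0\}$ onto $A$.

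So the plan is as follows. Choose a homeomorphism $\theta\colon Q\to Q\times Q''$ with $\theta(A)=A\times\{\overline 0\}$ and $\theta$ close to $a\mapsto (a,\overline 0)$ on $A$. This is possible by the Estimated Homeomorphism Extension Theorem, since both sets are Z-sets and the two embeddings $A\hookrightarrow Q$ and $a\mapsto(a,\overline 0)$ are close once $Q''$ uses only coordinates beyond $n$. Then set $\beta'=\theta^{-1}\circ(\alpha\times\sigma)\circ\check\theta$. This action is semifree with $\Fix(\beta')=A$. Near $A$ its displacement is small, because $\alpha$ fixes $A$ pointwise and is uniformly continuous, so orbits of points close to $A$ are small. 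Far from $A$, however, the $\alpha$-orbits are large.

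\emph{Step 3: the main obstacle.} The hard step is controlling the orbits away from $A$, where $\alpha$ genuinely moves points a lot. The idea I would try first is a Venetian-blind type argument in the spirit of Lemma~\ref{JimLemma03}, applied to the diagonal action $\alpha\times\sigma$.

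Let $U$ be the neighbourhood of $A$ on which $\alpha$-orbits have diameter less than $\varepsilon$. On the compact set $Q\setminus U$ the action is free in the $\sigma$-coordinate; concretely, $\alpha\times\sigma$ restricted to $(Q\setminus U)\times Q''$ has no fixed points. The goal is to find a homeomorphism $\zeta$ of $Q\times Q''$ that is the identity near $A\times\{\overline 0\}$ and pushes $(Q\setminus U)\times Q''$ into a region where only the $\sigma$-coordinate is seen at large scale. This would be done by reparametrizing the metric, using an isotopy that crushes the $Q$-factor far from $A$ into high-index coordinates, with estimates like those in Lemma~\ref{preJimLemma03}.

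I expect this to need an Anderson-type absorption: far from $A$, the first factor is ``traded'' into the tail via the coordinate rotations of Lemma~\ref{JimEen}, whose bi-Lipschitz constant $10^7$ keeps the displacement at most $10^7$ times small quantities. After conjugating by $\zeta$ and $\theta$, the displacement is bounded by $\varepsilon$ near $A$ and by a multiple of $\disp(\sigma)+2^{-n}$ elsewhere, which gives the theorem.
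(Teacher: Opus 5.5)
There is a genuine gap. Steps 1--2 only set up the problem. Step 3, which carries the entire content of the theorem, is a list of tools rather than an argument, and those tools cannot do the job.

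\emph{Why the proposed tools fail.} Conjugating by the bi-Lipschitz rotations of Lemma~\ref{JimEen} only multiplies displacement by $10^7$, so it is useless while $\alpha$ still moves heavily weighted coordinates. The Venetian Blind Lemma~\ref{JimLemma03} is a shrinking map that collapses a cone-like set to a point; it is not a homeomorphism preserving a nondegenerate $A$.

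\emph{The dilemma your sketch never resolves.} Consider where $\alpha$ acts.
\begin{itemize}
\item If $\alpha$ acts on early coordinates (your Step 2), orbits far from $A$ are large.
\item If $\alpha$ acts only on coordinates beyond $n$ and the early coordinates are left alone, the fixed set $\mathbb{J}^n\times A\times\{\overline 0\}$ has the wrong topology (your Step 1).
\item If $\alpha$ acts only on coordinates beyond $n$ and the early coordinates carry an independent based-free action, the fixed set is a copy of $A$ of diameter $O(2^{-n})$. For nondegenerate $A$, no family of homeomorphisms carrying these shrinking copies onto $A$ has a common modulus of continuity, so conjugating back destroys the displacement estimate.
\end{itemize}
What is needed is to couple the remaining coordinates to the far coordinate carrying $\alpha$. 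The fixed set should then be a graph over $A$ that does not depend on how far out $\alpha$ has been pushed. This coupling must not be disturbed when a point moves along an $\alpha$-orbit.

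\emph{The missing idea: an orbit-compatible twist.} After pushing $A$ into $\prod_n(-1,1)$, apply Tietze on the orbit space $Q/G$ to get a $G$-invariant map $p\colon Q\to\prod_n[-r_n,r_n]$ with $p|_A$ the identity. Set $f(x,y)=(x,g_{p(x)}(y))$, where $g_q$ is the coordinatewise piecewise-linear homeomorphism sending $q$ to $\overline 0$. Then $f(a,a)=(a,\overline 0)$ for $a\in A$. By the invariance of $p$, the second coordinate of $f^{-1}(x,y)$ is unchanged when $x$ is replaced by $\alpha(g,x)$.

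\emph{How the paper uses it.} The paper runs this in $Q^\infty$ with the fixed set always equal to the diagonal $\triangle A$.
\begin{itemize}
\item The twist over coordinate $N$ gives $f_N$, carrying $\triangle A$ onto $A$ placed in the $N$th factor.
\item One lets $\alpha$ act in the $N$th factor, at a cost of at most $\sum_{n\ge N}2^{-n}$.
\item A based-free $\tau$ from Theorem~\ref{derdestelling} acts in all other factors. Crucially, $\disp(\tau)$ is chosen \emph{after} $f_N$, from the uniform continuity of $f_N^{-1}$.
\item Conjugating by $f_N$ gives fixed set $\triangle A$ and displacement $<\varepsilon$, because $f_N^{-1}$ sees the large $\alpha$-motion only in coordinate $N$.
\item A single fixed homeomorphism $Q\to Q^\infty$ taking $A$ to $\triangle A$ transfers the result back to $Q$.
\end{itemize}
Neither the invariant map $p$ nor this order of choices ($\varepsilon$, then $N$, then $f_N$, then $\tau$) appears in your proposal, and without them Step 3 has no mechanism.
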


\begin{proof}
We prove this assuming that $\Fix(\alpha)$ is a subset of $\Pi_{n=1}^{\infty}(-1,1)_n$. By the Estimated Homeomorphism Extension
(see Section \ref{infdim}), this suffices.

We consider $Q\times Q$ with projections $p_1(x,y) = x$ and $p_2(x,y) = y$.

\begin{claim}
There is a homeomorphism $f :  Q\times Q \to Q\times Q$ such that
\begin{enumerate}
\item for $x\in \Fix(\alpha)$, $f(x,x) = (x,\overline 0)$,
\item for $x,y\in Q$, $p_1(f(x,y)) = x$,
\item for $x,y\in Q$ and $g\in G$, $p_2(f^{-1}(x,y))= p_2(f^{-1}(\alpha(g,x),y))$.
\end{enumerate}
\end{claim}
Assume that $\Fix(\alpha)\subseteq \prod_{n=1}^\infty [-r_n,r_n]$, where for each $n$, $0 < r_n < 1$.
We will show that
there is a continuous function $p :  Q\to \prod_{n=1}^\infty [-r_n,r_n]$ such that $p$ restricts to the identity
on $\Fix(\alpha)$, while moreover for every $x\in Q$ and $g\in G$, $p(x) = p(\alpha(g,x))$.

To prove this, consider the orbit space $Q/G$ with standard quotient map $\kappa:  Q\to Q/G$.  We identify $
\Fix(\alpha)$ and $\kappa(\Fix(\alpha))$. By the Tietze Extension Theorem, the identity $
\kappa(\Fix(\alpha))\to \Fix(\alpha)$ can be extended to a continuous function $u :  Q/G \to
\prod_{n=1}^\infty [-r_n,r_n]$. Hence $p = u \circ \kappa$ is as required.
\par\medskip

For every $t\in (-1,1)$, let $\phi_t :  \mathbb{J}\to \mathbb{J}$ be the unique homeomorphism taking the interval
$[-1,t]$ linearly onto $[-1,0]$ and $[t,1]$ linearly onto $[0,1]$.  For each $q\in \prod_{n=1}^\infty [-r_n,r_n]$
define a homeomorphism $g_q :  Q\to Q$ by
$$
    g_q(x) = (\phi_{q_1}(x_1), \phi_{q_2}(x_2), \dots, \phi_{q_n}(x_n), \dots ).
$$
Now define $f$ by $f(x,y) = (x,g_{p(x)}(y))$. Then $f$ is a homeomorphism. If $x\in \Fix(\alpha)$, then
$$
    f(x,x) = (x,(\phi_{x_1}(x_1), \phi_{x_2}(x_2), \dots, \phi_{x_n}(x_n), \dots )) = (x,\overline 0).
$$
Now take arbitrary $x,y\in Q$. Put $q=p(x) = p(\alpha(g,x))$. Then $f^{-1}(x,y) = (x,g_q^{-1}(y))$ and $f^{-1}
(\alpha(g,x),y) = (\alpha(g,x), g_q^{-1}(y))$, and so (3) is obvious.

\par\medskip

For notational convenience, let $A=\Fix(\alpha)$. Consider the diagonal embedding $ \iota:A \to\triangle A$ of
$A$
in $Q^{\infty}$. If we can prove that $\triangle A$ is the fixed point set of $G$-actions arbitrarily close to the
trivial action on $Q^\infty$, then trivially so is any Z-set copy of $A$ in $Q$ (again by the Estimated
Homeomorphism Extension Theorem (see Section \ref{infdim})).

Let $n$ be arbitrary for the time being, and consider $A_n= \{\bar 0\} \times A \times \{\bar 0\}$, which is $A$ in
the $n$-th $Q $ coordinate. By Claim~1, there is a homeomorphism $f_n : Q^\infty\to Q^\infty$ such that

\begin{enumerate}
\item $f_n(\triangle A) = A_n$,
\item for each $x\in Q^\infty$, $(f_n(x))_n = x_n$,
\item if $x,y\in Q^\infty$ are such that $x_i = y_i$ for all $i \not= n$ and there is a $g\in G$ such that
$y_n=\alpha(g,x_n)$ , then $(f_n^{-1}(x))_i= (f_n^{-1}(y))_i$ for all $i \not = n$.
\end{enumerate}

Now fix $\varepsilon > 0$, and pick $N$ so large that $\sum_{n=N}^\infty 2^{-n} < \frac{1}{2}\varepsilon$.
Consider $f_N$, and pick $\delta > 0$ such that if $x,y\in Q^\infty$ are such that $\varrho(x,y) < \delta$, then $
\varrho(f_N^{-1}(x),f_N^{-1}(y)) < \frac{1}{2}\varepsilon$.

By Theorem~\ref{derdestelling} and the homogeneity of $Q$, there is an action $\tau :  G\times Q\to Q$
such that $\Fix(\tau) = \{\overline 0\}$ while moreover for every $g\in G$, the homeomorphism $x\mapsto
\tau(g,x)$ has displacement less than $\delta$. Define an action $\beta :  G\times Q^\infty \to Q^\infty$
coordinatewise, as follows:
$$
    \beta(g,x)_n = \begin{cases}
                  \tau(g,x_n) & (n\not= N), \\
                  \alpha(g,x_n) & (n=N).
                  \end{cases}
$$
Observe that $\Fix(\beta) = A_N$.

Define $\gamma :  G\times Q\to Q$ by $\gamma(g,x) = f^{-1}_N (\beta(g,f_N(x)))$. Clearly, $\Fix(\gamma) =
\triangle A$.

Take an arbitrary $x\in Q^\infty$, and let $y = f_N(x)$. By (2), $y_N = x_N$. Hence for a fixed $g\in G$,
$$
    \beta(g,y) = (\tau(g,y_1),\dots,\tau_{N-1}(g,y_{N-1}), \alpha(g,x_N), \tau(g,y_{N+1}), \tau(g,y_{N+2}), \dots).
$$
Consider
$$
    z = (\tau(g,y_1),\dots,\tau_{N-1}(g,y_{N-1}), x_N, \tau(g,y_{N+1}), \tau(g,y_{N+2}), \dots).
$$
Then, clearly, $\varrho(y,z) < \delta$, hence $\varrho(x,f_N^{-1}(z)) < \frac{1}{2}\varepsilon$. Now by (3), $(f_N^{-1}(\beta(g,y)))_i= (f_N^{-1}(z))_i$ for all $i \not= N$ and so, by the choice of $N$, $\varrho( f_N^{-1}(\beta(g,y)), f_N^{-1}(z)) < \frac{1}{2}\varepsilon$. Hence $\varrho(x,\gamma(g,x)) < \varepsilon$.
\end{proof}

\section{Applications: \ANR\ Shapes and another Converse to P.\ A.\ Smith's Theorem}\label{fixed pointsets}

\subsection{Fixed-point sets and Shape equivalences}

We will now apply the results obtained in the settings of Shapes of fixed point sets. For
information on Shape theory, see Section 7.
By $\Sh(X) = \Sh(A)$ we mean that $X$ and $A$ are isomorphic in Borsuk's Shape category.
When $A$ is a compact \ANR, this topic is discussed in Section 7.  For compact \ANR's,
Shape equivalence is the same as homotopy equivalence.

In the following result, we will use the celebrated Chapman Complement Theorem \cite[25.2]
{chapman:lectures}.
Note that here if $X$ is not an \ANR\, then the action cannot be an equivariant \ANR.

\begin{theorem}\label{Shape}  Let $G$ be a compact group.  If $A$ is a compact
\ANR\ that is the fixed point set of a semifree action of $G$ on $Q$, then every Z-set $X$ of  $Q$ that is
Shape equivalent to $A$ is the fixed point set of a semifree action of $G$ on $Q$.  Moreover, we can choose this action to be arbitrarily close to the trivial action.

\end{theorem}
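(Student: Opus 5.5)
The plan is to realize $X$ through the Chapman Complement Theorem. By Theorem~\ref{semifreeapprox} and the Homeomorphism Extension Theorem, we may assume that $A$ is embedded in $Q$ as a Z-set and is the fixed point set of a semifree action $\alpha$ of $G$ on $Q$. Since $X$ and $A$ are Z-sets in $Q$ with $\Sh(X)=\Sh(A)$, the Chapman Complement Theorem \cite[25.2]{chapman:lectures} gives a homeomorphism $h:Q\setminus X\to Q\setminus A$.

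The candidate action on $Q$ is defined by
\[
\beta(g,x)=h^{-1}(\alpha(g,h(x)))\quad\text{for } x\in Q\setminus X,\qquad \beta(g,x)=x\quad\text{for } x\in X.
\]
Off $X$ this is conjugate to the restriction of $\alpha$ to the invariant set $Q\setminus A$, which is free. So $\beta$ is an algebraic action, semifree with $\Fix(\beta)=X$, provided it is continuous.

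The main obstacle is continuity of $\beta$ at points of $X$. A general complement homeomorphism need not carry orbits near $A$ to small sets near $X$. It only guarantees that $h(x)$ approaches $A$ as $x$ approaches $X$, since $h$ is a homeomorphism of the complements of compacta and so is proper there. To handle this I would first replace $\alpha$ by an action whose orbits shrink uniformly toward $A$. Continuity of $\alpha$ and compactness of $G$ already give this relative to $A$: $\diam(\alpha(G\times\{y\}))\to 0$ as $y\to A$. The difficulty is transporting that smallness through $h^{-1}$, which may distort sizes near $A$.

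My first attempt would use the proof of the Complement Theorem. There $h$ is realized through Z-embeddings into $Q\times[0,1)$ and fundamental sequences, and can be chosen to be the restriction of a map $\bar h:Q\to Q$ with $\bar h(X)\subseteq A$. Alternatively, one can arrange that for each $\varepsilon>0$ there is a neighborhood $U$ of $X$ on which $h^{-1}$ carries sets of diameter less than some $\eta(\varepsilon)$ near $A$ to sets of diameter less than $\varepsilon$. If such a control function $\eta$ is unavailable, I would rescale the action instead. Using the techniques of Theorem~\ref{derdestellingPartTwo}, I would make $\alpha$ so small near $A$ that the displacement of $\alpha$ at distance $t$ from $A$ is below a prescribed function $\eta(t)$. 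One way to do this is to conjugate by homeomorphisms of $Q\setminus A$ supported in shells $\{2^{-k-1}\le d(\cdot,A)\le 2^{-k}\}$, with constants chosen shell by shell as in the Venetian Blind construction. Then $h^{-1}$ maps orbits near $A$ to sets of diameter tending to $0$ near $X$, which gives continuity of $\beta$ at $X$.

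For the final clause, that the action can be taken arbitrarily close to the trivial action, I would apply Theorem~\ref{derdestellingPartTwo} to $\beta$. Its fixed point set $X$ is a Z-set, so that theorem yields an action with the same fixed point set $X$ and arbitrarily small displacement.
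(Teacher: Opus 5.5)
Your outline matches the paper's: the Complement Theorem, then a conjugation of $\alpha$ by a homeomorphism fixing the fixed-point set that pushes points toward it, so that at each level the orbits are smaller than the modulus of uniform continuity of the complement homeomorphism there, then Theorem~\ref{derdestellingPartTwo} for the last clause. But the controlled push, which is the step that carries all the content, is never supplied, and the mechanisms you offer do not work as stated.

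The ``alternative'' of a control function $\eta$ valid on a whole deleted neighborhood of $A$ cannot exist in general. It would make $h^{-1}$ uniformly continuous near $A$, so $h^{-1}$ would extend to a map of $Q$ carrying $A$ onto $X$. That is impossible, e.g., for $A$ a point and $X$ a nondegenerate cell-like Z-set. Extending $h$ over $X$ controls the wrong map, since the $\beta$-orbit of $x$ is the $h^{-1}$-image of an $\alpha$-orbit.

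The rescaling has a circularity. If $\psi$ fixes $A$ and $\alpha'$ is the conjugate of $\alpha$ by $\psi$, the $\alpha'$-orbit of $y$ is $\psi^{-1}(\alpha(G\times\{\psi(y)\}))$. Pushing $y$ toward $A$ shrinks the $\alpha$-orbit, but $\psi^{-1}$ then stretches it by an amount that depends on $\psi$ itself. Homeomorphisms ``supported in the shells $\{2^{-k-1}\le d(\cdot,A)\le 2^{-k}\}$'' do not break this: metric shells give no direction to push along, and the Venetian Blind Lemma is itself built on the cone coordinate of $C(Q)$. Your argument never uses that $A$ is an \ANR, which is a sign that an essential ingredient is missing.

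The missing idea is to manufacture a coordinate transverse to the fixed set, and this is exactly where the \ANR\ hypothesis enters. Since $C(A)$ is an \AR, Edwards' Theorem gives $K=C(A)\times Q\approx Q$. The paper forms the product of $\alpha$ with $\identity_Q$ and a based-free action, then applies the Homeomorphism Extension Theorem. It may therefore assume $\alpha$ acts on $K$ with $\Fix(\alpha)=K_0=(A\times\{0\})\times Q$. In the cone metric, the level $t$ of $((x,t),u)$ is precisely its distance to $K_0$, and the push is the slide $((x,t),u)\mapsto((x,\phi(t)),u)$.

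Because the slide changes only the level, its inverse does not increase the horizontal term $(1-\max\{s,t\})d(x,y)$ of $\varrho$. The vertical stretching is tamed in two ways:
\begin{enumerate}
\item Choose $d_i$ so that $\alpha$ has displacement $<\delta_i/3$ on $K_{d_i}$ and $\alpha(G\times K_{d_{i+1}})\subseteq K_{d_i}$. Then an orbit starting in a band $K_{d_\nu}\setminus K_{d_{\nu+1}}$ stays in the adjacent bands.
\item Let $\phi$ carry a partition of $[c_{i+1},c_i]$ of mesh $<\delta_{i+1}/6$ onto consecutive $d_j$'s. Then the pulled-back orbit meets only about three partition intervals.
\end{enumerate}
Here $\delta_i$ is a uniform-continuity constant for the complement homeomorphism on the compact set $\{t\ge c_i\}$. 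With this construction in place, your continuity argument at $X$ goes through.
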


\begin{proof}
We will first show that we may assume without loss of generality that we deal with actions on $K=C(A)\times
Q$.  As in the proof of
Theorem~\ref{derdestelling}, our use of cones simplifies the technical
details of the construction.

Let $\alpha$ be a semifree action of $G$ on $Q$ with fixed point set $A$.  By Theorem~\ref{point} there is a based-free action $\sigma$ of $G$ on $Q$.  As $Q$ is homogeneous, we may assume that the fixed point of $\sigma$ is $\{\bar 0\}$. Define a semifree action on $Q\times Q\times Q$ by $
(g,x,y,z)\mapsto (\alpha(g,x),y, \sigma (g,z))$.  The fixed point set of this action is
$A\times Q\times \{\bar 0\}$, which is a Z-set in $Q\times Q \times Q$.  Because
$C(A)$ is an \AR,  the product $K=C(A) \times Q\approx Q$ by Edwards' Theorem (see Section \ref{infdim}).  Also, $(A\times \{\bar 0\})\times Q$ is a Z-set in $K$.
By the Homeomorphism Extension Theorem (see Section \ref{infdim}), there is a homeomorphism of
$Q\times Q\times Q\to K$ carrying
$A\times Q\times \{\bar 0\}$ in $Q\times Q\times Q$ to $(A\times \{\bar 0\})\times Q$ in $K$. Therefore we may
indeed assume without loss of
generality that $\alpha$ acts semifreely on $K$ with Fix($\alpha)=(A\times \{\bar 0\})\times Q$ and that $X$ is a
Z-set in $Q$.

We use the metric $\varrho$ on $K$ given by
$$
    \varrho(((x,s),u),((y,t),v)) = (1-\max\{s,t\}) d(x,y) + |t{-}s| + d(u,v).
$$
(See Section \ref{preliminaries} for this notation).
Denote $A\times [0,t]\times Q$ by $K_t$.  Then Fix($\alpha)= K_0$.  Since $\Sh(K_0)=\Sh(X)$, there is by
the Chapman Complement Theorem \cite[25.2]{chapman:lectures}, a homeomorphism $f: K\setminus K_0\to
Q\setminus X$.
For every $i \ge 1$, let $c_i = 2^{-i}$. Then for each $i$, $P_i=\{((x,s),u) \in K : s \ge c_i\}$ is a compact subset
of $K\setminus K_0$. Hence by uniform continuity, we may choose a sequence $1 > \delta_1 > \delta_2 >\dots
> 0$ such that

\begin{enumerate}
\item if $w,z\in P_i$ and $\varrho(w,z) < \delta_i$, then $d(f(w),f(z)) < \frac{1}{i}$.\label{nieuw2}
\end{enumerate}

Fix a
decreasing sequence $1 > d_1 > d_2 >\dots>0$ such that

\begin{enumerate}[resume*]
\item for $i \ge 1$, $d_{i} < c_{i+1}$, \label{nieuw1}
\item the displacement of $\alpha$ on $K_{d_i}$ is less than $\frac{\delta_i}{3}$, and, \label{deeerste}
\item $\alpha (G\times K_{d_{i+1}})\subseteq K_{d_i}$.\label{detweede}
\end{enumerate}

For each $i\ge 1$ let
$$
    c_{i+1} = t_{k(i)}^i < t_{k(i)-1}^i < \cdots < t_2^i < t_1^i = c_i
$$
be a partition of $[c_{i+1},c_i]=[2^{-(i+1)}, 2^{-i}]$ such that

\begin{enumerate}[resume*]
\item $k(i) > 1$ and each subinterval
$[t_{m+1}^i,t_m^i], m\le k(i){-}1$, has length less than $\frac{\delta_{i+1}}{6}$. \label{hellupvier}
\end{enumerate}
Now let $j(1)=3$ and for $i\ge 1$ set $ j(i{+}1)=j(i)+k(i)-1$. Clearly,
\begin{enumerate}[resume*]
\item $j(i{+}1) > j(i) \ge i{+}2$. \label{afschatting}
\end{enumerate}

Let $\phi:[0,1]\to [0,1]$ be a homeomorphism
such that for each $i \ge 1$,

\begin{enumerate}[resume*]
\item $\phi(c_i)=d_{j(i)}$, \label{devierde}
\item for $m \le k(i)$, $\phi(t_m^i) = d_{j(i) + m-1}$.\label{devijfde}
\end{enumerate}
Note that (\ref{devierde}) and (\ref{devijfde}) are consistent since $\phi(t_{k(i)}^i) = d_{j(i) + k(i)-1}=d_{j(i+1)}=\phi(c_{i+1})$.

Define $h:K\to K$ by $((x,t),u)\mapsto ((x,\phi(t)),u)$ and $\beta: G\times K \to K$ by $\beta =(h^{-1}\circ \alpha) \circ (\identity_G\times h)$. Hence for $((x,t),u) \in K$ and $g\in G$,
$$
    \beta(g,((x,t),u))= h^{-1}(\alpha(g,(x,\phi(t)),u)).
$$
For $z=((x,t),u)\in K_{c_i}$, $g\in G$, and $m\le k(i){-}1$ such that
\begin{enumerate}[resume*]
\item $\cdots < t^{i+1}_1=c_{i+1}=t_{k(i)}^i \le t_{m+1}^i < t\le t_m^i \le t_{1}^i=c_i=t^{i-1}_{k(i-1)} < \cdots$, \label{dezevende}
\end{enumerate}
let
$w = \beta (g,z)=h^{-1}(\alpha(g,h(z)))=((y,s),v)$. Also, let $\nu= j(i) + (m{-}1)$.  Then $\phi(t^i_m) = d_\nu$, and $\phi(t^i_{m+1}) = d_{\nu +1}$.

Set $t' = \phi(t)$ and $s' = \phi(s)$. Hence,

\begin{enumerate}[resume*]
\item $z= ((x,t),u)$,\\ $h(z) = ((x,t'),u)$,\\ $w= \beta(g,z)=h^{-1}(\alpha(g,h(z)))= ((y,s),v)$, and\\ $h(w) = \alpha(g,h(z))= ((y,s'),v)$. \label{opsomming}
\end{enumerate}

We will compute how far $w$ and $z$ are apart, and what that tells us about the distance between $f(z)$ and $f(w)$.
The following diagram is helpful to see what is going on.
$$
    \xymatrix{
    \cdots \ar[d]_{\phi} t^{i+1}_1= c_{i+1} =t_{k(i)}^i  &t_{m+1}^i \ar[d]  &  \ar[d] t & t_m^i \ar[d]  & t^i_1=c_i=t^{i-1}_{k(i-1)} \cdots \ar[d]^{\phi}\\
   d_{j(i+1)}  &  d_{\nu+1} & t'        & d_\nu & d_{j(i)}
   }
$$

Put $\mu=\varrho(z,w)$. Then,
\begin{enumerate}[resume*]
\item $\mu =(1-\max \{t, s\})d(x,y)+|t-s|+d(u,v)$.
\end{enumerate}

\setcounter{claim}{0}
\begin{claim}\label{eersteclaimjawel}
$d(u,v) \le \textstyle \frac{\delta_{j(i)}}{3} -(1{-}\max\{s',t'\})d(x,y)$.
\end{claim}

\noindent Since $t \le c_i$, by (\ref{devierde}), $h(z) \in K_{d_{j(i)}}$, hence by (\ref{deeerste}), $\varrho(h(z),\alpha (g,h(z)))\le \frac{\delta_{j(i)}}{3}$.
So
$$
\varrho(h(z),h(w)) = (1{-}\max\{t',s'\})d(x,y) + |s'{-}t'| + d(u,v) \le \textstyle\frac{\delta_{j(i)}}{3},
$$
which clearly proves the claim.

\begin{claim}\label{tweedeclaimjawel}
$h(z) \in K_{d_\nu}\setminus K_{d_{\nu+1}}$ and $h(w) \in K_{d_{\nu-1}}\setminus K_{d_{\nu+2}}$.
\end{claim}

By (\ref{dezevende}), $d_{\nu+ 1} < t' \le d_{\nu}$, hence $h(z) \in K_{d_{\nu}}\setminus K_{d_{\nu+1}}$.  So by (\ref{detweede}), $h(w)=\alpha(g,h(z)) \in K_{d_{\nu-1}}$. If $h(w)\in K_{d_{\nu+2}}$, then, again by (\ref{detweede}),
$h(z) = \alpha (g^{-1}, h(w)) \in K_{d_{\nu+1}}$, which is a contradiction. Hence the claim is proved.
\par\medskip

So we ``roughly'' know where to find $h(w)$.

\begin{claim}
If $c_{i+1} \le s \le c_i$, then $d(f(z),f(w)) < \frac{1}{i+1}$.
\end{claim}

\noindent Assume first that $c_{i+1} \le s \le t$, hence $d_{j(i+1)} \le s'\le t'$. Since by Claim~\ref{tweedeclaimjawel}, $d_{\nu+2} < s'$, there are two cases. The first case is that $d_{\nu+1} \le s'$. Then $t^i_{m+1} \le s \le t\le t^i_m$, and so by (\ref{hellupvier}), $t{-}s \le \frac{\delta_{i+1}}{6}$. The second case is that $d_{\nu+2} < s' < d_{\nu + 1}$. Since $\phi(c_{i+1})=d_{j(i+1)}\le s'$, we have $j(i{+}1) > \nu{+}1$, hence $j(i{+}1) \ge \nu{+}2$. Therefore $c_{i+1} \le t^i_{m+2} < s < t^i_{m+1} < t \le t^i_{m}$. Hence in this case, again by (\ref{hellupvier}), $t-s \le \frac{2\delta_{i+1}}{6}$. This gives us by (\ref{afschatting}) and Claim~\ref{eersteclaimjawel} that
\begin{align*}
\varrho(z,w) &= (1{-}\max\{s,t\})d(x,y) + |s{-}t| + d(u,v)\\
             &\le (1{-}t) d(x,y)   + \textstyle\frac{2\delta_{i+1}}{6} + \textstyle \frac{\delta_{j(i)}}{3} -(1{-}t')d(x,y)\\
             &< (t'{-} t) d(x,y) + \delta_{i+1}.
\end{align*}
Now by (\ref{afschatting}) and (\ref{nieuw1}), $t'\le d_{\nu}\le d_{j(i)}\le d_{i+2} < c_{i+1}< t$, and so $t'{-}t < 0$. We conclude that $\varrho(z,w) < \delta_{i+1}$, and since both $z$ and $w$ are in $P_{i+1}$, we get by (\ref{nieuw2}), $d(f(z),f(w)) < \frac{1}{i+1}$.

Assume next that $t \le s \le c_i$. But this case is completely analogous to the previous one, hence we are done.

\begin{claim}\label{vierdeclaim}
If $s > c_{i}$, then $d(f(z),f(w)) < \frac{3}{i}$.
\end{claim}

By Claim~\ref{tweedeclaimjawel}, $h(w) \in K_{d_{\nu-1}}$. Since $s > c_i$, $\nu {-} 1 < j(i)$. Also, $j(i)\le \nu$. Hence $j(i) = \nu$.

Put
$$
    z'= ((x,c_i),u), \mbox{ and } z''= ((y,c_i),v).
$$
Then by (\ref{hellupvier}), $\varrho(z,z') = |t-c_i| \le \frac{\delta_{i+1}}{6}$. Since $z,z'\in P_{i+1}$, by (\ref{nieuw2}), $d(f(z),f(z'))< \frac{1}{i+1}$. Also, by Claim~\ref{eersteclaimjawel},
\begin{align*}
\varrho(z', z'') &= (1{-}c_i) d(x,y) + d(u,v)\\
                &\le (1{-}c_i) d(x,y) + \textstyle\frac{\delta_{j(i)}}{3} - (1{-}s')d(x,y)\\
                &=(s'{-}c_i) d(x,y) + \textstyle\frac{\delta_{j(i)}}{3}.
\end{align*}
Now by (\ref{afschatting}) and (\ref{nieuw1}), $s'\le d_{\nu - 1}= d_{j(i)-1}\le d_{i-1} < c_{i}$, and so $s'{-}c_i < 0$. We conclude that $\varrho(z,'z'') < \delta_{i}$, and since both $z'$ and $z''$ are in $P_{i}$, we get by (\ref{nieuw2}), $d(f(z'),f(z'')) < \frac{1}{i}$. Finally,
$\varrho(z'',w) = |c_i-s|\le \frac{\delta_i}{6}$. Hence, again by (\ref{nieuw2}), $d(f(z''), f(w)) < \frac{1}{i}$. We conclude that
\begin{align*}
    d(f(z),f(w)) &\le d(f(z),f(z')) + d(f(z'), f(z'')) + d(f(z''),f(w)) \\
    &< \textstyle\frac{1}{i+1} + \textstyle\frac{1}{i} + \textstyle\frac{1}{i}\\
    &< \textstyle\frac{3}{i}.
\end{align*}

So we are left with the case that $s < c_{i+1}$. But this case is completely analogous to the previous case (with bound $\frac{3}{i+1}$).

Thus we see that the action $\gamma:G\times Q\setminus X\to Q\setminus X$ given by
$\gamma(g,w)=f(\beta (g,f^{-1}(w)))$ extends to a semifree action on $Q$ with fixed point set $X$.

For the final statement in the theorem, just apply Theorem~\ref{derdestellingPartTwo}.
\end{proof}

\begin{corollary} Let $X \subset Q$ be a Z-set in $Q$ that is acyclic in integral \v Cech homology.
If $X$ is shape equivalent to an \ANR\ then every nontrivial compact Lie group $G$ acts semifreely
on $Q$ with fixed point set $X$ and arbitrarily small displacement.
\end {corollary}

\begin{proof} Let $A\in $ \ANR\ be shape equivalent to $X$.  Since \v Cech homology is a shape invariant, $A$ is acyclic in integral \v Cech and hence singular homology.  Thus, if we embed
$A$ in $Q$ as a Z-set, Theorem \ref{acyclicANR} produces a semifree $G$-action on $Q$ with fixed point set $A$.  By Theorem \ref{Shape}, $X$ is the fixed point set of semifree $G$-actions arbitrarily close to the trivial action.
\end{proof}

\begin{corollary}\label{CE}
Let $X$ be a Cell-like Z-set in a Keller space $K$.  If $G$ is a nontrivial compact Lie group, there are semifree actions of $G$ on $K$ with fixed point set $X$ arbitrarily close to the identity.
\end{corollary}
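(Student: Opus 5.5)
Since every Keller space is homeomorphic to $Q$, and a homeomorphism carries Z-sets to Z-sets and conjugates actions, the plan is to work in $Q$ itself. We transport both $K$ and $X$ by a homeomorphism $K\to Q$ and then transport the resulting actions back. The one point needing care is that ``arbitrarily close to the identity'' survives this transfer. This follows from uniform continuity of the homeomorphism $\psi:Q\to K$ on the compact space $Q$: given $\varepsilon>0$, choose $\delta>0$ so that $\delta$-close points of $Q$ have $\varepsilon$-close images in $K$. Then an action on $Q$ of displacement less than $\delta$ conjugates to an action on $K$ of displacement less than $\varepsilon$.

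The main step is to recognize that a Cell-like set $X$ has the Shape of a point, as observed in Section~\ref{preliminaries}, and a point is a compact \ANR. Moreover a single point is trivially acyclic in integral homology, so $X$, being Shape equivalent to it, is acyclic in integral \v Cech homology because \v Cech homology is a Shape invariant. The preceding Corollary therefore applies directly to the Z-set $X\subset Q$: every nontrivial compact Lie group $G$ acts semifreely on $Q$ with fixed point set $X$ and arbitrarily small displacement.

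There is also a more direct route, useful as a cross-check. A single point $\{\bar 0\}$ is the fixed point set of a semifree (indeed based-free) action of $G$ on $Q$, namely the standard based-free action of Theorem~\ref{standard action}. Theorem~\ref{Shape}, applied with $A=\{\bar 0\}$, then shows that every Z-set Shape equivalent to a point, i.e.\ every Cell-like Z-set, is the fixed point set of semifree $G$-actions arbitrarily close to the trivial action. No step is a genuine obstacle. The only point needing care is the identification of ``Cell-like'' with ``Shape of a point'' (so that Theorem~\ref{Shape} or the preceding Corollary applies), together with the routine transfer of small displacement through the homeomorphism $K\approx Q$ described above.
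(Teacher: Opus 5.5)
Your proposal is correct and is essentially the paper's own argument. The paper states Corollary~\ref{CE} without a separate proof, directly after the corollary on Z-sets that are acyclic and Shape equivalent to an \ANR, and it follows from that corollary as you say, since a Cell-like set has the Shape of a point, which is an acyclic compact \ANR. Your cross-check via Theorem~\ref{standard action} and Theorem~\ref{Shape} with $A=\{\bar 0\}$ uses the same ingredients. Your uniform-continuity transfer from $Q$ to $K$ is the routine step that the paper leaves tacit.
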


\subsection{A Partial Converse to P.\ A.\ Smith's Theorem}

Here we interpret our results as providing a partial converse to the restriction of P.A. Smith's theorem on periodic maps \cite{Smith41} in the context of Keller spaces.  They may be viewed as an extension of the Theorem of L. Jones mentioned in the Introduction.
    As usual, for an integer $n$, $\mathbb{F}_n$ denotes the field $\mathbb{Z}/n\mathbb{Z}$. A space $X$ is
\emph{acyclic mod $n$} if $\Check{H}_k(X) = \mathbb{F}_n$ if $k=0$ and 0 otherwise ($\Check{H}$ denotes
\v{C}ech homology). Hence $X$ is compact, connected, and nonempty, that is, a continuum.   A periodic homeomorphism of period $n$ is  {\em semifree} if the action of $\mathbb Z_n$ it induces is semifree.

\begin{theorem}\label{stellingoverANRs}
Let $X$ be a compact \ANR. Then $X$ is the fixed point set of a periodic homeomorphism of $Q$ of period $n$ acting semifreely if and only if $X$ is acyclic mod $n$.
\end{theorem}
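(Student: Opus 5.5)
For the ``only if'' direction, let $h$ be a semifree periodic homeomorphism of $Q$ of period $n$ with fixed point set $X$. For each prime $p$ dividing $n$, the power $h^{n/p}$ has period $p$. Because the induced $\mathbb{Z}_n$-action is semifree, the fixed point set of $h^{n/p}$ is exactly $X$. Now $Q$ is compact and contractible, so it is $\mathbb{F}_p$-acyclic in \v Cech homology. Smith's Theorem \cite[Theorem III]{Smith41} then gives that $X$ is $\mathbb{F}_p$-acyclic for every prime $p \mid n$.

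Next we pass from primes to $n$. By the universal coefficient theorem, $\mathbb{F}_p$-acyclicity for all $p\mid n$ of the compact \ANR\ $X$ implies that $\tilde H_\ast(X;\mathbb{Z})$ has no $p$-torsion and no free part surviving mod $p$. Since $X$ is an \ANR\ it has the homotopy type of a finite complex, so these groups are finitely generated. Each is then a finite group of order prime to $n$, and hence $\tilde H_\ast(X;\mathbb{Z}_n)=0$. Because $X$ is an \ANR, \v Cech and singular homology agree, so $X$ is acyclic mod $n$. The case $n$ not prime is exactly where this bookkeeping is needed; for nonprime $n$ one may also use the standard fact that mod $n$ acyclicity is equivalent to mod $p$ acyclicity for all $p\mid n$, for finitely generated homology.

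For the ``if'' direction, suppose $X$ is a compact \ANR\ that is acyclic mod $n$. First embed $X$ in $Q$ as a Z-set, which is possible for every compact space. The plan is to imitate the proof of Theorem~\ref{acyclicANR} with $G=\mathbb{Z}_n$ acting on itself by translation. Form $Y=X\ast\mathbb{Z}_n$, with $X'$ the copy of $X$. As there, $Y=B\cup C$ with $B=X\times C(\mathbb{Z}_n)$ and $C=C(X)\times\mathbb{Z}_n$, so $Y$ is a compact \ANR. The fixed point set of the induced semifree action is $X'$. Then take the product with the standard based-free action on $Q(\mathbb{Z}_n)$ and use Edwards' Theorem and the Homeomorphism Extension Theorem, exactly as in steps (1)--(6) of that proof. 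This yields a semifree $\mathbb{Z}_n$-action, a periodic homeomorphism of period $n$, on $Q$ with fixed point set $X$, provided $Y$ is an \AR.

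The main obstacle is that $Y$ need not be contractible, since $X$ is only mod-$n$ acyclic rather than integrally acyclic. Mayer--Vietoris shows instead that $Y$ is a connected \ANR\ whose reduced homology is that of $n$ copies of the suspension of $X$, hence nonzero. I would handle this using Jones' theorem \cite{Jones71}. First use the fact that $X$ is an \ANR\ to replace it, up to homotopy, by a finite connected CW complex $K$; for instance $X\times Q$ is homeomorphic to $|K|\times Q$ by West's/Chapman's results. Jones' theorem then gives a contractible finite complex $L\supseteq K$ with a semifree cellular $\mathbb{Z}_n$-action having $\Fix=K$. In this way $|K|$ becomes the fixed point set of a semifree action on the compact \AR\ $L$, and hence on $L\times Q(\mathbb{Z}_n)\approx Q$ after crossing with the standard based-free action. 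Finally, $X$ is a Z-set that is shape-equivalent, indeed homotopy equivalent, to the compact \ANR\ $|K|$. Theorem~\ref{Shape} then transfers the action to one with fixed point set exactly $X$, and it may also be taken close to the identity.

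The delicate points are the reduction to a finite complex and the condition that $K$ be connected, which is required for Jones; connectedness follows from $\tilde H_0(X;\mathbb{Z}_n)=0$.
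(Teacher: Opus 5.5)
Your proof is correct. Up to the point where a Z-set copy of $|K|$ is the fixed point set of a semifree $\mathbb{Z}_n$-action on a Hilbert cube, it coincides with the paper's: Edwards and Chapman give $X\times Q\approx |K|\times Q$, then Jones supplies the action, then you cross with a based-free action.

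The routes diverge at the last step. The paper does not use Theorem~\ref{Shape} there. It notes that $\Fix(\beta)=P\times\{\bar 0\}\times Q\approx X\times Q$, so projection gives a cell-like map $f:\Fix(\beta)\to X$. It then forms the adjunction space $T=S\cup_f X$, recognizes $T$ as a Hilbert cube via \cite[Theorem 2.4]{west:borsuk} (or Toru\'nczyk), and lets the action descend to $T$, where the fixed point set becomes exactly $X$.

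You instead embed $X$ as a Z-set in advance and apply Theorem~\ref{Shape} to pass from the \ANR\ fixed point set $|K|$ to the homotopy (hence shape) equivalent Z-set $X$. This is legitimate, since Theorem~\ref{Shape} is proved earlier and independently of this result. It also gives more at once:
\begin{itemize}
\item you realize any prescribed Z-embedding of $X$ directly;
\item the action can be taken arbitrarily close to the identity;
\item you have in effect merged the theorem with the paper's proof of Corollary~\ref{shapeanr:p-acyclic}, which combines Theorems~\ref{stellingoverANRs} and~\ref{Shape} in exactly this way.
\end{itemize}
The cost is reliance on Chapman's Complement Theorem and the controlled conjugation estimates behind Theorem~\ref{Shape}. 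The paper's construction needs only the cell-like map supplied by $X\times Q\approx P\times Q$ and a recognition theorem for Hilbert cubes.

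On the ``only if'' side, the paper just cites Smith, which literally covers prime $n$ only. Your reduction to the period-$p$ homeomorphisms $h^{n/p}$ (semifreeness forces $\Fix(h^{n/p})=X$), followed by the universal coefficient argument using finite generation of the homology of a compact \ANR, correctly handles composite $n$.

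A harmless slip in your abandoned first attempt: $X\ast\mathbb{Z}_n$ is homotopy equivalent to a wedge of $n-1$ suspensions of $X$, not $n$. Its reduced homology is nonzero only when $X$ is not integrally acyclic.
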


\begin{proof}
There remains to show that if $X$ is acyclic mod $n$, then it is a fixed point set of a semifree homeomorphism of period $n$. Since
$X$ is a compact \ANR, $X\times Q$ is a compact $Q$-manifold by Edwards' Theorem (see Section \ref{infdim}),
cf.~\cite{Tor:hilbertcube}. There consequently is by Chapman~\cite[36.2]{chapman:lectures} a compact
polyhedron $P$ such that $X\times Q\approx P\times Q$. Clearly, $P$ is acyclic mod~$n$. Hence there exists
by Jones~\cite[1.1]{Jones71} a contractible compact polyhedron $L$ containing $P$ and a periodic
semifree homeomorphism
$\alpha :  L\to L$ of period $n$ such that $\Fix(\alpha)=P$.  Let $\tau$ be any semifree homeomorphism of $Q$ of
period $n$ with $\Fix(\tau) = \{\overline 0\}$ (for example, a simple product of rotations).  Consider the space $S=L\times
Q\times Q$ with its period $n$ homeomorphism
$\beta = \alpha \times \tau\times \mathrm{id}_Q$.  By \cite[5.6]{West70} (or Edwards' Theorem), $S\approx Q$. Observe that $\Fix(\beta) = P\times \{\bar 0\}\times
Q$ is a Z-set in $S$, so that the natural homeomorphism $P\times \{\bar 0\}\times Q\to P\times \{\bar 0\}
\times Q\times \{0\}$ extends to a homeomorphism of $S$ onto $P\times \{\bar 0\}\times Q\times [0,1]$.  Since
$P\times Q\approx X\times Q$, there is a cell-like map
$f :  \Fix(\beta)
\to X$. The adjunction space $T = S\cup_f X$ is thus homeomorphic to the mapping cylinder of $f$ and is
homeomorphic to $Q$ by \cite[Theorem 2.4]{west:borsuk} (see also~\cite[Theorem 3]{Tor:hilbertcube}).  Now $f$
induces a period $n$ homeomorphism on the Hilbert cube $T$ with fixed point set $X$.
\end{proof}

The following together with Corollary \ref{CE}, condenses our main results on Keller spaces.  Observe that
every compact \ANR\ has the Shape of a compact polyhedron by
\cite{west:borsuk}. Hence the class of compact spaces having the Shape of some
compact \ANR\ coincides with the class of compact spaces having the Shape of some
compact polyhedron.

\begin{corollary}\label{shapeanr:p-acyclic}
Let $X$ be a Z-set with the Shape of a compact \ANR\ in a Keller space $K$. Then for every $n >1$, the following statements are equivalent:

\begin{enumerate}

\item $X$ is acyclic mod $n$.

\item There is a homeomorphism of $K$ of period $n$ with fixed point set $X$.

\item Arbitrarily close to the identity there are semifree homeomorphisms of $K$ of period~$n$ with fixed point set $X$.
\end{enumerate}
\end{corollary}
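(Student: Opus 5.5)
We will prove the cycle (3) $\Rightarrow$ (2) $\Rightarrow$ (1) $\Rightarrow$ (3). All Keller spaces are homeomorphic to $Q$, so we work in $Q$, with $X$ a Z-set there. Fix a compact \ANR\ $A$ with $\Sh(A)=\Sh(X)$. Since \v Cech homology is a shape invariant and agrees with singular homology on \ANR's, $A$ is acyclic mod $n$ exactly when $X$ is. The implication (3) $\Rightarrow$ (2) is trivial.

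\textbf{(2) $\Rightarrow$ (1).} Let $h$ have period $n$ with $\Fix(h)=X$. For a prime $p$ dividing $n$, the power $h^{n/p}$ has period $p$, and its fixed point set contains $X$. To apply Smith's Theorem cleanly we need this fixed set to be exactly $X$, i.e.\ we need $h$ semifree. Here I would read ``period $n$ with fixed point set $X$'' in (2) as semifree, in line with Theorem \ref{stellingoverANRs}, so that every nontrivial power of $h$ has fixed set $X$. Smith's Theorem then gives that $X$ is $\mathbb{F}_p$-acyclic for every prime $p\mid n$, since $Q$ is acyclic. By the universal coefficient theorem, applied to $A$ in singular homology, $\tilde H_*(A;\mathbb{F}_p)=0$ for all $p\mid n$ forces $\tilde H_*(A;\mathbb{Z}_n)=0$. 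Indeed, the integral homology of $A$ is finitely generated and has no $p$-torsion or free part; one could also filter $\mathbb{Z}_n$ by its $p$-primary pieces and use the associated long exact sequences. Transferring back along the shape equivalence, $X$ is acyclic mod $n$.

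\textbf{(1) $\Rightarrow$ (3).} Suppose $X$ is acyclic mod $n$. Then $A$ is acyclic mod $n$, so by Theorem \ref{stellingoverANRs} $A$ is the fixed point set of a semifree homeomorphism of $Q$ of period $n$, i.e.\ of a semifree $\mathbb{Z}_n$-action on $Q$. Now Theorem \ref{Shape} applies with $G=\mathbb{Z}_n$: the Z-set $X$, being shape equivalent to the compact \ANR\ $A$, is the fixed point set of a semifree $\mathbb{Z}_n$-action on $Q$ arbitrarily close to the trivial action. The generator of this action is a semifree homeomorphism of period $n$, arbitrarily close to the identity, with fixed point set $X$. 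Finally we transport this homeomorphism to $K$ by a homeomorphism $K\approx Q$ sending $X$ to a Z-set. Closeness is preserved because $K$ is compact, so the transporting homeomorphism is uniformly continuous, and the Z-set condition is preserved by the Homeomorphism Extension Theorem.

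\textbf{Main obstacle.} The delicate step is (2) $\Rightarrow$ (1) for composite $n$. It needs the semifree reading of (2) so that Smith's Theorem applies to each $h^{n/p}$, and it needs the passage from $\mathbb{F}_p$-acyclicity for all $p\mid n$ to $\mathbb{Z}_n$-acyclicity, which we carry out on the \ANR\ $A$ rather than directly on $X$.
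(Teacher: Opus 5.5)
Your proposal is correct and follows the paper's route. The paper also uses three ingredients: shape invariance of mod-$n$ acyclicity, Theorem~\ref{stellingoverANRs} applied to the \ANR\ $A$, and Theorem~\ref{Shape} with $G=\mathbb{Z}_n$ to transfer a semifree action to $X$ with small displacement.

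The paper's one-line proof leaves $(2)\Rightarrow(1)$ implicit: Theorem~\ref{stellingoverANRs} concerns only \ANR's, so Smith's theorem really has to be applied to $X$ itself. Your version fills in exactly those details. You read (2) as semifree, which is genuinely needed: Floyd's period-$6$ examples cited in the introduction give a disconnected polyhedral Z-set in $K$ fixed by a period-$6$ homeomorphism. You then apply Smith to each $h^{n/p}$ and upgrade $\mathbb{F}_p$-acyclicity to $\mathbb{Z}_n$-acyclicity on $A$.
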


\begin{proof}
Since being acyclic mod $n$ is a Shape invariant of metric compacta (\cite[page 122, Corollary 1]{mardseg}), we are done by applying Theorems~\ref{stellingoverANRs}, and \ref{Shape}.
\end{proof}

\section{Compact Infinite-Dimensional Manifolds}\label{manifolds}

Let us recall that a manifold modeled on a Keller space $X$ is a locally compact space in
which each point has an open neighborhood
homeomorphic to an open subset of $X$, i.e., to an open subset of the Hilbert cube $Q$.  Since $Q$
is homogeneous and by Keller's theorem is homeomorphic to its cone, it follows immediately from the Homeomorphism Extension Theorem that every point has
arbitrarily small open neighborhoods homeomorphic to $Q\times [0,1)$.
We now apply some results of Cappell, Weinberger, and Yan~\cite{cwy} to $Q$-manifolds.
{\em Unless otherwise noted, for the rest of this paper, $G$ and
$M$ will denote a finite group and a
connected compact $Q$-manifold, respectively, and $\tilde M$ will always denote the universal
cover of $M$.  We adopt the convention of \cite{cwy} in which if $p:\tilde B \to B$ is a covering projection and $f:A\to B$,
then $\tilde A$ denotes the pull-back $f^{\ast}\tilde B$ and $\tilde f = f_{\ast}:\tilde A\to \tilde B$
is the canonical map covering $f$. }  All results apply with appropriate modifications to the case that $M$ is not
connected.

We use the following definitions.

\begin{definition} ${}$ \begin{enumerate}

\item If $X$ is a space, $X^Q = X\times Q$ is its {\em stabilization}.  If $f:X\to Y$ is a map, $f^Q=f\times \identity_Q:X^Q\to Y^Q$ is its stabilization. If $\alpha:H\times X\to X$ is an action of $H$ on $X$, then $\alpha^Q:H\times X^Q\to X^Q$ by $\alpha^Q(h,(x,y))=(\alpha(h,x),y)$ is its stabilization.

\item A {\em triangulation} of a $Q$-manifold $N$ is a homeomorphism $h:K\times Q\to N$, where $K$ is a simplicial complex.
 \end{enumerate}
\end{definition}

Chapman proved \cite [Lemma 37.1]{chapman:lectures} that each pair
$(N,A)$ of $Q$-manifolds with $A$ a Z-set in $N$ has a triangulation of pairs $h:(K^Q,L^Q)\to (N,A)$  for a
pair of simplicial complexes $(K,L)$ (with $L$ a subcomplex of $K$) and that every triangulation of $A$
extends to one of the pair $(N,A)$.  Note that if $f:K^Q\approx N$ and $g:L^Q\approx N$ are two
triangulations
of a $Q$-manifold $N$, then $K\overset{\times \bar 0}\longrightarrow K^Q\overset{ f}\to N\overset{g^{-1}}
\to L^Q\overset{p}\to L$ is a homotopy equivalence, where $p$ is the projection.  In fact, it is a simple
homotopy equivalence \cite{chapman:torsion}. For simple homotopy, Whitehead groups $Wh(\pi)$, and
Whitehead torsion, see \cite{cohen} and \cite{milnor:wht}.  Together with the proof by the second author
\cite[proof of Theorem 5.2]{West70} that stabilizations of simple homotopy equivalences between finite
simplicial complexes are homotopic to homeomorphisms, this allows the extension
\cite{chapman:simpleforANR's} of simple homotopy theory to compact connected $Q$-manifolds by the
definition, for a homotopy equivalence $f:M\to N$, of
the torsion $\tau(f) = \tau(\hat f)\in Wh(\pi_1(N)),\text{ where }\hat f : K\overset{\times \{\bar 0\}}\to
K^Q\approx M \overset {f}\to N \approx L^Q\overset{p}\to L$.  Then $\tau(f) = 0$ if and only if $f $ is
homotopic to a homeomorphism.  Edwards' Theorem permits the further extension to compact connected
\ANR's in \cite{chapman:simpleforANR's} by the same method.

The stabilization of a cellular action of $G$ (on a finite cell complex) is a $G$-\ANR\ in the
category of $G$-actions \cite[Theorem 4.2]{jaworowski}, and in a $G$-\ANR\ action the fixed point set
of each subgroup must be an (inequivariant) \ANR, so
Theorem~\ref{Shape} and its generalization in the following Section show that all
nontrivial non-free
semifree $G$-actions on $Q$-manifolds may
be approximated by non-\ANR\  semifree actions with non-\ANR\ fixed point sets.  Thus, a $G$-action on a compact $Q$-manifold may
be conjugate to the stabilization of a
cellular action on a finite cell complex, the stabilization of an \ANR\ or non-\ANR\ action on a
compact \ANR\, or not conjugate to the
stabilization of any $G$-action on a compact \ANR.  Anderson's Conjecture is equivalent to the statement that for $G = \mathbb Z_2$ all based-free actions on $Q$ are \ANR\ actions  \cite{WestWong79}.

We next collect several facts for reference.

\begin{proposition}\label{collection} Let $H$ be a compact
group.
\begin{enumerate}
\item If a Z-set of the form $A\times Q$ is the fixed point set of a semifree $H$-action on $M$ then so is $A \times \{\bar 0\}$.

\item If $A$ is the fixed point set of a nontrivial semifree $H$-\ANR\ action $\alpha$ on $M$, then
\begin{enumerate}

\item $A$ is an (inequivariant) \ANR,

\item $A^Q$ is a Hilbert cube manifold,

\item $\alpha$ may be approximated  arbitrarily closely by semifree $H$-\ANR\  actions
$ \beta$ and $\gamma$ on $M$ with Z-set fixed point sets
$A'$ and $(A^Q)'$, respectively, such that

\begin{enumerate}

\item there is a homeomorphism $\zeta:A'\to A$ arbitrarily close to the inclusion $A'\to M$, and

\item there is a homeomorphism $\eta:(A^Q)'\to A^Q$ such that the map $(A^Q)'\overset{\eta}{\to} A^Q\overset
{p}{\to} A$ is arbitrarily close to the inclusion $(A^Q)'\to M$, where $p:M^Q\to M$ is
the projection.
\end{enumerate}
\end{enumerate}
\end{enumerate}
\end{proposition}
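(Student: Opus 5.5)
We treat the two parts of the proposition separately; both reduce to tools already used in the paper.

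For (1), suppose $\alpha$ is a semifree $H$-action on $M$ with fixed point set the Z-set $A\times Q$. We would first stabilize and then use the product trick from Theorem~\ref{semifreeapprox}. Let $\sigma$ be a based-free action of $H$ on $Q$ with fixed point $\bar 0$; it exists by Theorem~\ref{point}, since $\alpha$ is nontrivial. The diagonal action $\alpha\times\sigma$ on $M\times Q\approx M$ (stability of $Q$-manifolds) is semifree with fixed point set $A\times Q\times\{\bar 0\}$, which is a Z-set. Since $Q\times\{\bar 0\}\approx Q$ is only a reparametrisation, the cleaner route is: $A\times Q$ and $A\times\{\bar 0\}$ are both Z-sets of $M$. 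They are identified inside $M$ via the proper homotopy $A\times Q\times[0,1]\to M$ that contracts the $Q$-factor to $\bar 0$; this is a homotopy between Z-embeddings of $A\times Q$ only after we re-embed $A\times\{\bar 0\}$ as $A\times Q$ in the stabilized copy. Concretely, we would realize $M\approx M\times Q$, place $A\times Q\times\{\bar 0\}$ and $A\times\{\bar 0\}\times Q$ in the same homotopy class of Z-embeddings of $A\times Q$, and use the Homeomorphism Extension Theorem (Section~\ref{infdim}) to carry one onto the other. Conjugating the action by this homeomorphism transfers the fixed point set. Fixed points then have the form $A\times\{\bar 0\}\times Q$, while the statement asks for $A\times\{\bar 0\}$. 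So we finish by a further conjugation sending the Z-set $A\times\{\bar0\}\times Q$ onto a Z-copy of $A\times\{\bar 0\}\times \{\bar 0\}$, which is impossible topologically unless we change the action; we would therefore instead apply $\sigma$ on the last factor, i.e.\ use the action $\alpha\times\sigma$ restricted appropriately. Its fixed point set is $(A\times Q)\times\{\bar 0\}$; equivalently we replace the $Q$ factor of $A\times Q$ by a factor on which $\sigma$ acts. This is the step needing care: we want $H$ to act on $A\times Q$'s $Q$-coordinate by $\sigma$ while keeping $\alpha$ elsewhere, which is exactly the diagonal trick of Theorem~\ref{semifreeapprox} applied to the splitting $M\approx M'\times Q$ in which the fixed set is $A\times Q$.

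For (2)(a), a retract argument suffices. If $\alpha$ is an $H$-\ANR\ action, then equivariant retractions restrict to fixed sets: $\Fix(\alpha)$ is a neighborhood retract of any \ANR\ containing it, by taking an equivariant neighborhood retraction and restricting. This is the remark quoted before the proposition. For (2)(b) we then apply Edwards' Theorem.

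For (2)(c) we take $\beta=\alpha\times\sigma$ on $M\times Q\approx M$ and $\gamma=\alpha\times\sigma\times\mathrm{id}_Q$ on $M\times Q\times Q$; both are $H$-\ANR\ actions, using the standard $G$-\AR\ action $\sigma$ from Theorems~\ref{standard action} and \ref{point}. Their fixed point sets $A\times\{\bar0\}$ and $A\times\{\bar0\}\times Q$ are Z-sets. We then use the fact, as in the proof of Theorem~\ref{semifreeapprox}, that the projections $M\times Q\to M$ and $M\times Q\times Q\to M$ are approximable by homeomorphisms via reindexing coordinates in a collar/triangulation (Chapman's triangulation $K^Q\approx M$). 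Pulling the actions back along these homeomorphisms gives (i) and (ii), with $\zeta$ and $\eta$ the obvious product identifications. The main obstacle is arranging that the homeomorphism $M\times Q\to M$ is close to projection globally on $M$; for this we would use a triangulation $M\approx K\times Q$ and absorb the extra factors into the tail coordinates of $Q$.
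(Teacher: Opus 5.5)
\paragraph{Part (2).} Your treatment of (2) follows the paper's proof, and it is fine. For (a), note that a map from a trivially acted metric space into $M$ is equivariant exactly when it lands in $\Fix(\alpha)$, so the $H$-\ANR\ extension property makes $A$ an \ANR. Part (b) is Edwards' Theorem. For (c), you cross with a based-free $H$-\AR\ action on $Q$ and compose with a homeomorphism $\theta\colon M\times Q\to M$ (respectively from $M\times Q\times Q$) close to the projection, which is Chapman's Theorem 15.1, taking $\zeta=p\theta^{-1}$.

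\paragraph{Part (1): the gap.} You rightly note that no homeomorphism of $M$ can carry $A\times Q$ onto $A\times\{\bar 0\}$; the two need not even be homeomorphic. So conjugation is useless, and so is crossing with $\sigma$: that only produces fixed sets $A\times Q\times\{\bar 0\}\approx A\times Q$ again. Your proposed remedy does not work. You want to split $M\approx M'\times Q$ along the fixed set and let $H$ act on the $Q$-factor by $\sigma$ ``while keeping $\alpha$ elsewhere.'' But $\alpha$ acts on all of $M$ and need not decompose as $\alpha'\times\id_Q$. Hence there is no action $\alpha'$ on $M'$ to keep. The diagonal trick of Theorem~\ref{semifreeapprox} presupposes an action whose fixed set is already $A$, which is exactly what you are trying to produce.

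\paragraph{The missing idea.} Change the space rather than the action.
\begin{enumerate}
\item Let $\sim$ collapse each fiber $\{a\}\times Q$, $a\in A$, to a point, with quotient map $\kappa\colon M\to M/{\sim}$. Since $\alpha$ is the identity on $A\times Q$, it descends to an action on $M/{\sim}$ with fixed set $\kappa(A\times Q)\approx A$.
\item Show that $\kappa$ is a near-homeomorphism. The Estimated Homeomorphism Extension Theorem extends the fiber shrinks $(a,y)\mapsto(a,ty)$ to homeomorphisms of $M$ that move points only within members of a prescribed open cover. The Bing Shrinking Criterion then gives homeomorphisms $\eta\colon M\to M/{\sim}$ arbitrarily close to $\kappa$.
\item Since $\kappa(A\times Q)$ is a Z-set and $\eta$ is homotopic to $\kappa$, the Homeomorphism Extension Theorem lets you adjust $\eta$ so that $\eta=\kappa$ on $A\times\{\bar 0\}$.
\item Then $g=\kappa^{-1}\eta$ maps $M\setminus(A\times\{\bar 0\})$ homeomorphically onto $M\setminus(A\times Q)$. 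Set $\beta(h,x)=g^{-1}\alpha(h,g(x))$ and extend by the identity on $A\times\{\bar 0\}$. This is free off $A\times\{\bar 0\}$.
\item Continuity at $A\times\{\bar 0\}$ holds because $\alpha$ fixes each collapsed fiber pointwise. If $x_n\to(a,\bar 0)$, then $g(x_n)$ accumulates only on $\{a\}\times Q$, hence so does $\alpha(h_n,g(x_n))$. Therefore $\eta(\beta(h_n,x_n))\to\kappa(a,\bar 0)=\eta(a,\bar 0)$, i.e.\ $\beta(h_n,x_n)\to(a,\bar 0)$.
\end{enumerate}
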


\begin{proof} If a Z-set $A\times Q$ is the fixed point set of the $H$-action $\alpha$, let $f_t:A\times Q\to A\times Q $ be given by $(a,y)\mapsto (a,t{\cdot} y)$ for $0 < t\le 1$.  The Estimated Homeomorphism
Extension Theorem (see Section \ref{infdim}) shows that for each open cover $\mathcal
U$ of $M$ such that every $\{a\}\times Q$ is in some member $U(a)$ of $\mathcal U$, there is a homeomorphism $\bar f_t$ of $M$ extending $f_t$ such that for each $x\in M$
there is a $U_x\in \mathcal U$ containing $x$ and $\bar f_t(x)$ and for each $a\in A$,
$\bar f_t(\{a\}\times Q) = \{a\}\times tQ$.  The Bing Shrinking Criterion \cite{bing_shrinking}
then applies to show that the quotient map $\kappa:M\to M/\sim$, where
$\sim $ is the equivalence relation generated by $(a,y)\sim (a,z)$ for $a\in A$ and $y$ and $z$ in $Q$,
is a uniform limit of homeomorphisms $\eta$ of $M$ onto $M/\sim$.  If $\eta$ is sufficiently
close to $\kappa$ they are homotopic (since $M/ \sim \,\in \ANR\ $).  Also,
$\kappa (A \times Q)$ is a Z-set in $M\times Q/\sim $.  Therefore we
may adjust $\eta$ to equal $\kappa$ on $A\times \{\bar 0\}$. Let $g:M\setminus A\times \{\bar 0\}\to M\setminus A\times \{\bar 0\}$ be the restriction of $\kappa^{-1}\eta$. Now the action $\beta(h,x) =g^{-1}\alpha (h,g(x))$ on $M\setminus A\times \{\bar 0\}$ is free and extends by the identity over
$A\times \{\bar 0\}$ to a semifree action on $M$.

If $A$ is the fixed point set of a nontrivial semifree $H$-\ANR\ action
then $A$ must be an absolute neighborhood extensor for
metric spaces with trivial actions, so $A$ is an \ANR\ and $A^Q$ is a $Q$-manifold.
Let $\sigma$ be a based-free $H$-action on $Q$ (Theorem \ref{point}) with fixed point $\bar 0$. The fixed point
set of the diagonal action $(\alpha,\sigma)$ of $H$ on $M\times Q$ is $A\times \{\bar 0\}$.
Since $\sigma$ and $\alpha$ are $H$-\ANR\ actions, the diagonal action $\bar \beta$
of $H$ on $M\times Q(H)$ is also an $H$-\ANR\  action with fixed point set $A\times
\{\bar 0\}$, which is a Z-set in $M\times Q(H)$.

For a homeomorphism $\theta:M\times Q(H)\to M$ that is arbitrarily close to $p$ (\cite [Theorem 15.1]{chapman:lectures}), set $A'=\theta(A\times\{\bar 0\}),\beta =
\theta\,\bar\beta\,\theta^{-1}$, and $\zeta = p\,\theta^{-1}$.

 For $A^Q$, replace $M \text{ by } M\times Q,\alpha \text{ by } \alpha \times id_Q,
 \sigma \text{ by } id_Q \times \sigma, \text{ and } \theta \text{ by } \theta \, \tau,
 \text{ where }
 \tau:Q\times Q(H)\approx Q(H).$
 \end{proof}

\subsection{Applying Cappell-Weinberger-Yan I: Cellular Actions}

Cappell, Weinberger, and Yan give algebraic K-theoretic obstructions to the existence of
semifree actions of $G$ in
the two categories of finite cell complexes and cellular maps and of compact \ANR's and
continuous maps.  Their constructions
produce equivariant \ANR's (see Section \ref{preliminaries}).

\begin{definition} We make the following definitions.
\begin{enumerate}

\item A {\em cell structure} on a compact $Q$-manifold $N$ is a homeomorphism $h:K^Q\to N$, where $K$ is a finite cell complex.

\item a map $f: N\to
M$ of compact $Q$-manifolds is
{\em cellular} provided that there are cell complexes $K$ and $L$, a cellular map $g:K\to L$, and
cell structures $h_1:K^Q\to N$ and $h_2:L^Q\to M$ such that $f=h_2g^Qh_1^{-1}$.

\item An action $\alpha:G\times M\to M$ of $G$ on a $Q$-manifold $M$
is {\em cellular} provided that it is the stabilization of an action $\beta$ of $G$
on a finite cell complex $K$ by cellular homeomorphisms,  i.e., there is a cell
structure $h:K^Q\to M$ on $M$
 such that for each $g\in G,\alpha (g,.) = h\beta(g,.)^Qh^{-1}$.
 \end{enumerate}
\end{definition}

Cellular actions are $G$-\ANR\ actions \cite{Antonyan01}.

We use $C_{\ast}(X)$ for the integral cellular or singular chain complex of $X$ as appropriate and $\tilde H_i$ for
reduced homology.  If $g_{\ast}:C_{\ast}\to D_{\ast}$ is a chain map of chain complexes then its
mapping cone $C_{\ast} ^{cone}(g_{\ast})$ is defined as $C^{cone}_n(g_{\ast})=C_{n-1}\oplus D_n$
with $\partial=(-\partial_C,\partial_D-g_{n-1})$ \cite{Weibel}.
The mapping cylinder of a map $f:A\to B$ of spaces is $M(f)=(A\times [0,1])\cup B/\sim$ where
$(x,1)\sim f(x)$.  We identify $A$ and $B$ with their images $A\times\{0\}$ and $B$ in
$M(f)$.
The mapping cone of $f$ is $C(f)=(A\times [-1,1])\cup B/\equiv$, where $(x,-1)\equiv (x',-1)$ for all
$x,x'\in A$ and $(x,1)\equiv f(x)$.  The algebraic mapping cone $C_{\ast}^{cone}(f)$ of $f$ is $C_{\ast}^{cone}
(f_{\ast})$, where $f_{\ast}:C_{\ast}(A) \to C_{\ast}(B)$.

In the following lemma, we record several more facts for reference.
Suppose that $f_i:A_i\to B_i$ are cellular maps of connected finite
cell complexes, $i=1,2$, and that $g:A_1\to A_2$ and $h:B_1\to B_2$ are cellular homotopy
equivalences with $f_2g$ homotopic ($\simeq$) to $hf_1$.
Set $\pi=\pi_1(B_2)$ and identify it with the group of deck transformations of
the universal cover $\tilde B_2$.  This gives $\pi$-actions on induced covering spaces and mapping cylinders
and $\mathbb{Z}[\pi]$-chain complex structures on the associated cellular chain complexes and algebraic mapping cones.

\begin{lemma}\label{cylinderlemma}${}$

\begin{enumerate}

\item  $ (M(f_1),A_1)$ and $(M(f_2),A_2)$ are homotopy equivalent as pairs, and with all maps and homotopies preserving $M(f_1)\setminus A_1$ and $M(f_2)\setminus A_2$ as well as $A_1$ and $A_2$,

\item $C_{\ast}(\tilde f_1)$ is $\mathbb{Z}[\pi]$-chain homotopy equivalent to $C_{\ast}(\tilde f_2)$,

\item if $f_2g=hf_1$, and $\psi: (M(f_1),A_1)\to (M(f_2),A_2)$ is the induced map, and if $g$ and $h$ are simple homotopy equivalences, then
the torsion of the
(algebraic) mapping cone $C^{cone}_{\ast}(\tilde \psi^{cone}_{\ast})$ of the induced chain homotopy equivalence
${\tilde \psi_{\ast}^{cone}}:C_{\ast}^{cone}(\tilde f_1)\to C_{\ast}^{cone}(\tilde f_2)$ is zero in Wh($\pi$).

\end{enumerate}
\end{lemma}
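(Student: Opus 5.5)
The plan is to build explicit homotopy equivalences of mapping cylinders from $g$, $h$ and a homotopy $H:f_2g\simeq hf_1$, and then read off (2) and (3) from the induced chain-level maps.

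For (1), use $H$ to define $\Psi:M(f_1)\to M(f_2)$. On the cylinder part, send $[a,t]$ to $[g(a),2t]$ for $t\le\frac12$. For $t\ge\frac12$, send $[a,t]$ to $H(a,2t-1)$, pushed into $B_2$. On $B_1$, send $b$ to $h(b)$. To keep $A_2$ disjoint from the image of $M(f_1)\setminus A_1$, I will instead use the parametrization $[a,t]\mapsto[g(a),t]$ near $t=0$ and route the homotopy through the collar: use $[g(a),\lambda(t)]$ with $\lambda(0)=0$ and $\lambda>0$ elsewhere, then follow $H$ inside $B_2$. A homotopy inverse $\Phi$ is built the same way from homotopy inverses $g'$, $h'$ and the induced homotopy $f_1g'\simeq h'f_2$; this homotopy is obtained from $H$ together with $gg'\simeq \id$ and $h'h\simeq\id$. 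The homotopies $\Phi\Psi\simeq\id$ and $\Psi\Phi\simeq\id$ are assembled from the homotopies $g'g\simeq\id$ and $h'h\simeq\id$, with the same collar trick. They are stationary in the cylinder coordinate only at level $0$, so they preserve $A_i$ and also preserve the complements.

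For (2), lift everything to universal covers. $\Psi$ induces a $\pi_1$-isomorphism (it is a homotopy equivalence), so it lifts equivariantly to $\tilde\Psi$ between the pulled-back covers. This gives a $\mathbb{Z}[\pi]$-chain homotopy equivalence of pairs $C_\ast(\widetilde{M(f_1)},\tilde A_1)\simeq C_\ast(\widetilde{M(f_2)},\tilde A_2)$. After cellular approximation these pair complexes are isomorphic to the algebraic mapping cones of $\tilde f_{1\ast}$ and $\tilde f_{2\ast}$: the cells of $M(f)$ are those of $B$ plus $e\times(0,1)$ and $e\times\{0\}$. Reading $C_\ast(\tilde f_i)$ as the mapping cone complex, this gives (2).

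For (3), when $f_2g=hf_1$ strictly, $\psi$ is the map $g\times\id\cup h$. This map is cellular and restricts to $g$ on $A_1$ and to $h$ on $B_1$. The induced map of algebraic mapping cones is the map of short exact sequences
\[0\to C_\ast(\tilde B_1)\to C^{cone}_\ast(\tilde f_1)\to C_{\ast-1}(\tilde A_1)\to 0\]
determined by $\tilde h_\ast$ and $\tilde g_\ast$. I then apply additivity of torsion for short exact sequences of chain homotopy equivalences (Milnor, Cohen): the torsion of $\tilde\psi^{cone}_\ast$ equals $\tau(h)\pm\tau(g)$ pushed into $Wh(\pi)$, which is zero since $g$ and $h$ are simple.

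The main obstacle I expect is bookkeeping of bases and of sign and shift conventions, so that the chosen cellular bases of the mapping cone are compatible with the exact sequence. A subordinate issue is that $g$ maps into $A_2$, whose fundamental group maps to $\pi$ possibly non-injectively, so the torsion of $g$ must be taken after inducing up to $\mathbb{Z}[\pi]$. Functoriality of Whitehead torsion under change of rings handles this.
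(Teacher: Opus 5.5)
Your treatment of (1) has a gap in the sentence claiming that $\Phi\Psi\simeq\mathrm{id}$ and $\Psi\Phi\simeq\mathrm{id}$ are ``assembled from'' $g'g\simeq\mathrm{id}$ and $h'h\simeq\mathrm{id}$.

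Fix notation: $\epsilon'\colon g'g\simeq\mathrm{id}_{A_1}$, $\eta\colon gg'\simeq\mathrm{id}_{A_2}$, $\epsilon\colon h'h\simeq\mathrm{id}_{B_1}$, $\eta'\colon hh'\simeq\mathrm{id}_{B_2}$, and let $K\colon f_1g'\simeq h'f_2$ be your induced homotopy. On the sliding region, $\Phi\Psi$ carries $[a,t]$ along the path $P_a=K(g(a),\cdot)\ast h'H(a,\cdot)$ in $B_1$. Take any homotopy of pairs $\Phi\Psi\simeq\mathrm{id}$ that restricts to $\epsilon'$ on $A_1$ and to $\epsilon$ on $B_1$, and compose it with the retraction $M(f_1)\to B_1$. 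The result is a homotopy rel endpoints, in $\mathrm{Map}(A_1,B_1)$, between $P$ and $f_1\epsilon'\ast(\epsilon f_1)^{-1}$. So you need a homotopy of homotopies filling a square, and your outline never produces one.

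With your recipe for $K$, the interchange law shows this filler exists exactly when $h'f_2\eta g\simeq h'f_2 g\epsilon'$ rel ends; the triangle identity $\eta g\simeq g\epsilon'$ would guarantee it. Likewise $\Psi\Phi$ needs $h\epsilon\simeq\eta' h$ after whiskering by $f_1g'$. For arbitrary data this fails. Take $A_i=B_i=S^1$, all of $f_i,g,g',h,h'$ the identity, $H,\epsilon,\epsilon',\eta'$ constant, and $\eta$ the full rotation loop. Then $\Phi\Psi$ is a Dehn twist of the annulus $M(f_1)=S^1\times[0,1]$. It is not homotopic to the identity through maps stationary on both boundary circles.

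The conclusion of (1) is still true, and you can repair it in either of two ways:
\begin{itemize}
\item Choose the data coherently, e.g.\ upgrade $g$ and $h$ to half-adjoint equivalences, or choose $K$ so that the square closes.
\item Argue as the paper does. It never builds an inverse; it shows that the mapping cylinder $M(\zeta)$ of your map $\Psi$ deformation retracts onto $M(f_2)$ and onto $M(f_1)$, restricting to deformation retractions of $M(g)$ onto $A_2$ and $A_1$. Fox's theorem supplies exactly the coherence you are missing.
\end{itemize}
Incidentally, your collar modification is unnecessary: $[g(a),2t]$ already misses $A_2$ for $t>0$.

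Granting (1), your (2) is essentially the paper's argument. The paper reads off the $\mathbb{Z}[\pi]$-chain equivalence from a chain contraction of $C_\ast(\widetilde{M(\zeta)})$ relative to $C_\ast(\widetilde{M(f_1)}\cup\widetilde{M(g)})$, rather than from an explicit lifted inverse.

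Your (3) is correct, takes a different and lighter route than the paper, and does not depend on (1). When $f_2g=hf_1$, $\psi$ is strict and $\tilde\psi^{cone}_\ast$ is a chain equivalence by algebra alone. You filter $C^{cone}_\ast(\tilde f_i)$ by $C_\ast(\tilde B_i)$, with quotient the suspension of $C_\ast(\tilde A_i)$, so additivity gives $\tau=\tau(\tilde h_\ast)-\tau(\tilde g_\ast)$ at once. The paper instead uses $0\to C^{cone}_\ast(\tilde g)\to C^{cone}_\ast(\tilde\psi)\to C^{cone}_\ast(\tilde\psi^{cone}_\ast)\to 0$. That forces it to prove geometrically, via a collapse of $M(\psi)$ onto $M(f_1)\cup M(h)$, that $\psi$ itself is a simple homotopy equivalence.

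Both routes need the same change-of-rings step for $g$, because $\tilde A_i$ is induced from $\tilde B_2$ and need be neither connected nor universal. The paper does this in two stages, through the component $\check A_i$ and $\check\pi=\hat\pi/\bar\pi$. You do it in one step, via $C_\ast(\tilde A_i)\cong\mathbb{Z}[\pi]\otimes_{\mathbb{Z}[\pi_1(A_i)]}C_\ast(\hat A_i)$ as based complexes.
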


\begin{proof} Let $\Lambda:A_1\times [0,1]\to B_2$ be a homotopy from
$f_2g=\lambda_0 $ to $hf_1=\lambda_1$, and let $\zeta:(M(f_1),A_1)\to (M(f_2),A_2)$ be the map $(a,t)
\mapsto (g(a),2t)$, for $t\le \frac{1}{2}$, $(a,t)\mapsto \lambda_{2t-1}(a)$ for
$\frac{1}{2}\le t\le 1$, and $b\mapsto h(b)$, for $b\in B_1$.  Sliding along the $J$
coordinate of $M(\zeta) = (M(f_1)\times J) \cup M(f_2)/\sim, J = [0,1]$, gives a
deformation retraction of $M(\zeta)$ to $M(f_2)$ that restricts to a deformation retraction of
$M(g)$ to $A_2$. To verify that $\zeta$ is a homotopy equivalence of pairs it suffices to
observe that there is a deformation retraction of $M(\zeta)$ to $M(f_1)$ that restricts to
a deformation retraction of $M(g)$ to $A_1$.  Since $g$ and $h$ are homotopy equivalences, there are by Fox's Theorem
\cite[Corollary 0.21]{Hatcher01} deformation retractions $F^1:M(g)\times [0,1]\to M(g)$ of $M(g)$ to $A_1$
and $F^2:M(h)\times  [0,1]\to M(h)$ of $M(h)$ to $B_1$.  It is immediate that $F^1$ extends to a homotopy $F^3$
of $M(\zeta)$ that is stationary on $\{((a,t),s)\in M(\zeta):t\ge\frac{1}{2}\}\cup M(f_1)\cup M(h)$
that preserves the $t$-coordinate.
Then,
since each $n$-cell of $A_1$ generates an $n+2$-cell
of $M(\zeta)$ with a ``free face" in $M(g)$, there is a cellular collapse $F^4$ of $M(\zeta)$ to $M(f_1)\cup M(h)$. Applying $F^3,F^4$, and $F^2$ in succession produces the desired deformation retraction of pairs from $
(M(\zeta),M(g))$ to $(M(f_1),A_1)$ demonstrating~(1).  {By the Cellular Approximation Theorem, $F^1,F^2, F^3,$ and $F^4$ may be assumed to be cellular maps.}
This deformation  induces a $\mathbb Z[\pi]$-chain contraction of
$C_{\ast}(\widetilde{M(\zeta)}/C_{\ast}(\widetilde {M(f_1)}\cup \widetilde{M(g)})$ which immediately implies (2).

For (3), choose base points appropriately.  Following our notation convention,
 $\tilde B_1=h^{\ast}(\tilde B_2)$, $\tilde A_2 = f_2^{\ast}(\tilde B_2)$, $\tilde A_1=\tilde f_1^{\ast}h^{\ast}(\tilde B_2)=g^{\ast}
(\tilde A_2)$, $\tilde g=\tilde A_1\to \tilde A_2$, and the groups of deck transformations of $
\tilde B_i$ and $\tilde A_i$ are identified with $\pi$.   Let $\hat A_i$ be the universal cover of $A_i$.  We
identify the fundamental groups of $A_1$ and $A_2$ by the isomorphism $g_{\ast}$ induced by $g$ and
denote it by $\hat \pi$.   Now identify  $\hat A_1$ with the pullback $g^{\ast}(\hat A_2)$ and set $\hat g =
g_{\ast}: \hat A_1\to \hat A_2$.  The groups of deck transformations of $\hat A_i$ are identified with $
\hat\pi$ using $\hat g$.

We first reduce to showing that $\tau(C^{cone}_{\ast}(\tilde g))=0$.  Note that $C^{cone}_{\ast}
(\tilde\psi_{\ast}^{cone})= C^{cone}_{\ast}(\tilde\psi)/C^{cone}_{\ast}(\tilde g)$
so there is a short exact sequence of $\mathbb{Z}[\pi]$-chain complexes $0\to C^{cone}_{\ast}(\tilde g)\to C^{cone}
_{\ast}(\tilde \psi)\to
C^{cone}_{\ast}(\tilde\psi_{\ast}^{cone})\to 0$.  Using $\pi$-lifts of cells in  $M(\psi)$ to give preferred
bases, the Whitehead torsion of the chain complexes satisfies
$\tau(C^{cone}_{\ast}(\tilde\psi))=
\tau(C^{cone}_{\ast}(\tilde g))+\tau(C^{cone}_{\ast}(\tilde\psi_{\ast})^{cone}))$ in Wh($\pi$) \cite [17.2]
{cohen}.
As $h$ is a simple homotopy equivalence, so is
 $h_1: M(f_1)\to  M(f_1)\cup M(h)$.  The collapse analogous to $F^4$ demonstrates that the
 inclusion $h_2:M(h)\cup M(f_1)\to
 M(\psi)$ is a simple homotopy equivalence.  Therefore the inclusion $h_2h_1:M(f_1)\to M(\psi)$
 and hence $\psi$ are simple homotopy equivalences.  So $\tau(C^{cone}_{\ast}(\tilde\psi))=0$.

{We now verify that
$\tau(C^{cone}_{\ast}(\tilde g))=0$.  Set $\hat \pi = \pi_1(A_2)$ and identify it with $\pi_1(A_1)$ using the
isomorphism $g_{\ast}:\pi_1(A_1) \to \pi_1(A_2)$.  Let $\hat A_i$ be the universal cover of $A_i, i= 1,2$, and
denote by $\hat g:\hat A_1 \to \hat A_2$ the isomorphism of covering spaces induced by $g$.  Let
$\check A_i\subseteq \tilde A_i$ be the component containing the base point and $\check g:\check A_1 \to \check A_2$ be the restriction of $\tilde g$.  Denote by $p_i:\check A_i \to A_i$ the restriction to
$\check A_i$ of the projection of $\tilde A_i$ to $A_i$, and let
$\check\pi\subset \pi$ be the subgroup of $\pi$ that acts on $\check A_i$ as deck transformations of the
covering projection $p_i$.
Set $\bar \pi = \pi_1(\check A_1)$ and identify it with its image $\bar \pi$ in $\pi_1(\check A_2)$.
Using $p_{i\ast}$, also identify $\bar\pi$ with its images in $\hat\pi = \pi_1(A_i)$ and thus with subgroups of the
group $\hat\pi$ of deck transformations
of $\hat A_1$ and $\hat A_2$.  The covering projections $\hat A_i/\bar\pi \to A_i$ are equivalent to $p_i:\check
A_i \to A_i$ as both
fundamental groups project to $\bar\pi\subseteq \hat\pi$, so
the choice of base points determines covering space isomorphisms $k_i: \hat
A_i/\bar \pi \to \check A_i$ identifying the group
of deck transformations of $\hat A_i/\bar\pi \to A_i$ with those $\check \pi$ of $p_i:\check A_i \to A_i$, i = 1,2}.

Because $\check A_i$ is a connected
component of $\tilde A_i=f_i^{\ast}(\tilde B_i)$ and  $\tilde B_i$ is simply connected, $\bar \pi=ker
(f_{i\ast}:\hat\pi=\pi_1(A_i)\to \pi_1(B_i)=\pi)$ and is normal in $\hat \pi$.  Thus, the  group of deck
transformations of $\hat A_i/\bar \pi =\hat \pi/\bar \pi$ and therefore
$\check \pi = \hat \pi /\bar\pi$.  Let $\phi:\hat\pi\to \check\pi$ be the 
homomorphism $\hat \pi \to \hat \pi/\bar \pi \overset{k_{i\ast}}\longrightarrow \check \pi$ and
$j_i:\check \pi \to \pi$ be the inclusion.  (By our identifications these are independent of $i$.)

Since $A_i$ is connected,  $\tilde A_i\cong\pi\times_{\check\pi}\check A_i$, where $\pi\times_{\check\pi}\check A_i=\pi\times
\check A_i/\sim$, with $(\zeta,a)\sim (\zeta\eta,\eta^{-1}{(a)})$ for $\zeta\in \pi, a\in \check A_i$
and
 $\eta\in
\check\pi$.  The maps $\check g$ and $\hat g$ covering $g$ have mapping cylinders $M(\check g)$ and
$M(\hat g)$ which are covering spaces of $M(g)$.  Thus, $C^{cone}_{\ast}(\check g)\cong \mathbb{Z}[\check\pi]
\otimes_{\phi_{\ast}}C^{cone}_{\ast}(\hat g)$ and $C^{cone}_{\ast}(\tilde g)\cong \mathbb{Z}[\pi]\otimes_{j_{\ast}}C^{cone}
_{\ast}(\check g)$.  So $C^{cone}_{\ast}(\hat g)$,
$C^{cone}_{\ast}(\check g)$, and $C^{cone}_{\ast}(\tilde g)$ are free $\mathbb{Z}[\hat \pi]$-, $\mathbb{Z}[\check\pi]$-, and $\mathbb{Z}[\pi]$-chain complexes, respectively, with bases
lifts {$ \hat\sigma $, $\check \sigma = 1\otimes_{\phi_{\ast}}\hat \sigma$, and $\tilde\sigma =
1\otimes _{j_{\ast}}\check \sigma $ of cells {$\sigma$ of $A_i$}. }

Therefore, the change of rings homomorphisms satisfy $\tau(C^{cone}_{\ast}(\check g))=\linebreak
\phi_{\ast}(\tau(C^{cone}_{\ast}(\hat g)))\in Wh(\check \pi)$ and
$\tau(C^{cone}_{\ast}(\tilde g))=j_{\ast}(\tau(C^{cone}_{\ast}(\check g)))\in Wh(\pi)$.  As $g$ is a simple
homotopy equivalence, $\tau(C^{cone}_{\ast}(\hat g))=0$, so $\tau(C^{cone}_{\ast}(\tilde g))=0\in Wh(\pi)$.
\end{proof}

The following is a stabilized version of \cite[Theorem 1]{cwy}.  Here $\pi$ is identified with the covering
transformations of $\tilde M$ and $H_{\ast}(M;\mathbb F_p\pi)$ denotes homology with local coefficients, which may
be computed using the $\mathbb F_p$-chains of
$\tilde M$ with multiplication by elements of $\pi$ given by covering transformations \cite[Section 3.H]{Hatcher01}.  From this perspective,
$H_i(A;\mathbb F_p\pi)$ is computed from the $\mathbb F_p$-chains of
 the restriction $\tilde A$ to $A$ of $\tilde M$.  $\tilde H_{\ast}(A;\mathbb F_p\pi)$  is
reduced  homology.
  Our notation differs slightly from that
of \cite{cwy} in that $C_{\ast}^{cone}(\tilde f)$ is their $C_{\ast}(\tilde f)$, and our $\bar f$ corresponds to their $g$.

\begin{theorem}\label{QM:cellhomotopy} Let $A\subseteq M$ be a compact
$Q$-submanifold of $M$ and
denote the inclusion $A\to M$ by $f$.
Then the following are equivalent, where $\pi$ is the fundamental group of $M$.
\begin{enumerate}

\item There exist a compact $Q$-manifold $N$ containing $A$,
a semifree cellular action of
$G$ on $N$
with fixed point set $A$, and an invariant map that is an (inequivariant) homotopy equivalence $\bar f:N\to M$
extending $f$.

\item \begin{enumerate}

\item $f$ induces isomorphisms from $\tilde H_i( A;\mathbb{F}_p\pi)$ to $\tilde
H_i( M ;\mathbb{F}_p\pi)$ for all $i$ and for each prime $p$ dividing the order of $G$,
and

\item an obstruction $o(f) \in \tilde K_0 (\mathbb Z[\pi \times G])$ vanishes.\end{enumerate}
\end{enumerate}
\end{theorem}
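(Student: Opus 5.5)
The plan is to reduce the statement to \cite[Theorem 1]{cwy} for finite cell complexes by triangulating, and then stabilize. First, by Chapman's triangulation theorem for pairs \cite[Lemma 37.1]{chapman:lectures}, since $A$ is a compact $Q$-submanifold of $M$ (we may assume it is a Z-set, moving it by a homeomorphism close to the identity via the Estimated Homeomorphism Extension Theorem if necessary), there are finite simplicial complexes $L\subseteq K$ and a homeomorphism $h:(K^Q,L^Q)\to(M,A)$. Let $f_0:L\to K$ be the inclusion, so $f=h f_0^Q h^{-1}$ on $A$. Since $h$ and the projections $K^Q\to K$, $L^Q\to L$ are homotopy equivalences, the local-coefficient homology conditions in (2)(a) for $f$ and for $f_0$ are equivalent. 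We then define $o(f)$ to be the Cappell--Weinberger--Yan obstruction $o(f_0)\in\tilde K_0(\mathbb Z[\pi\times G])$, built from the algebraic mapping cone $C^{cone}_{\ast}(\tilde f_0)$.

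The first key step is to show that $o(f)$ is well defined, independent of the triangulation. Given two triangulations, the comparison maps $K\to K'$ and $L\to L'$ are simple homotopy equivalences by \cite{chapman:torsion}, and they commute with the inclusions up to homotopy. Lemma~\ref{cylinderlemma}(2) gives a $\mathbb Z[\pi]$-chain homotopy equivalence of the algebraic mapping cones, and part (3) shows it has zero torsion, after replacing by mapping cylinders so that the square commutes strictly. The obstruction of \cite{cwy} depends only on the chain homotopy type of this cone, together with its simple structure, as a complex feeding into the $G$-equivariant construction. Hence the two obstructions coincide. I expect this invariance check to be the main obstacle. In particular, one must verify that the CWY obstruction is insensitive to replacing $f_0$ by a map into a mapping cylinder and to the identifications of $\pi$ via $h$.

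With this in hand, (2)$\Rightarrow$(1) follows directly. \cite[Theorem 1]{cwy} gives a finite complex $Y\supseteq L$ with a semifree cellular $G$-action whose fixed set is $L$, together with an invariant homotopy equivalence $\bar f_0:Y\to K$ extending $f_0$ (using a mapping cylinder so that $f_0$ is an inclusion). Stabilizing, set $N=Y^Q$ with action $\beta^Q$. Its fixed set is $L^Q$, which we identify with $A$ through $h$. Then $\bar f=h\bar f_0^Q$ is invariant, since $G$ acts trivially on $M$ here, and extends $f$.

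For (1)$\Rightarrow$(2), suppose $N$ carries a cellular action, so it is the stabilization of a cellular $G$-complex $Y$ with fixed subcomplex $L'$ and $L'^Q\approx A$. The composite $Y\to Y^Q\approx N\to M\approx K^Q\to K$ restricts on $L'$ to a map homotopic to the composite through $A$. The necessity half of \cite[Theorem 1]{cwy}, applied to $L'\to K$, yields both the homology isomorphisms and the vanishing of the obstruction for that map. By the invariance established above, this obstruction equals $o(f)$, so $o(f)=0$.
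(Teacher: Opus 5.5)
Your overall architecture matches the paper's: triangulate, prove the obstruction is well defined using Lemma~\ref{cylinderlemma}, then invoke both directions of \cite[Theorem 1]{cwy} and stabilize. The gap is in the opening reduction. The hypothesis is only that $A$ is a compact $Q$-submanifold of $M$. You cannot ``move it by a homeomorphism close to the identity'' to make it a Z-set, because being a Z-set is preserved by ambient homeomorphisms. Moreover, the Estimated Homeomorphism Extension Theorem requires \emph{both} ends to be Z-embeddings already. So Chapman's pair triangulation \cite[Lemma 37.1]{chapman:lectures} is not available in general, and two steps that depend on it are unsupported: defining $o(f)$ via the inclusion $f_0:L\to K$ of a subcomplex, and the exact equality $\bar f|_A=f$ in (2)$\Rightarrow$(1).

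The paper avoids this by triangulating $A$ and $M$ \emph{separately}, $\mu:L^Q\to A$ and $\nu:S^Q\to M$. It then works with a cellular approximation $\phi$ of the induced map $h=\zeta\nu^{-1}f\mu\gamma:L\to S$. This costs two checks your route skipped. First, $o(f)$ must be independent of the choice of cellular approximation; the paper uses the mapping cylinder of a cellular homotopy. Second, in (2)$\Rightarrow$(1), the map $\nu g^Q\psi^{-1}$ on $N=K^Q\cup_\mu A$ agrees with $f$ on $A$ only up to homotopy, so a Homotopy Extension step is needed to make it extend $f$ exactly. To keep the result invariant, do this on the orbit space $N/G$, which contains $A$. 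Alternatively, you could run your pair-triangulation argument for a homotopic Z-embedding $f'$ of $A$. You would then owe the same two items: independence of $o$ from the choice of $f'$, and the final homotopy-extension correction.

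Two smaller points. An element of $\tilde K_0(\mathbb Z[\pi\times G])$ depends only on the $\mathbb Z[\pi]$-chain homotopy type of the algebraic mapping cone, so Lemma~\ref{cylinderlemma}(2) suffices. Part~(3) and the ``simple structure'' are what the paper needs later for $\sigma(f)\in Wh^T(\pi\subset\pi\times G)$ in Theorem~\ref{QM:simple}, not here. The step you flag as the main obstacle is therefore routine. Also, in (1)$\Rightarrow$(2) the map $L'\to K$ you obtain is not the inclusion of a pair triangulation. Your invariance statement must therefore be phrased for arbitrary homotopy-commutative squares of cellular maps, which Lemma~\ref{cylinderlemma}(2) provides. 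In the paper's setup this is automatic, since the map $\zeta\nu^{-1}f\mu\gamma$ built from the cellular structure of $N$ is literally one of the maps used to define $o(f)$.
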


\begin{proof}  Assuming that 2(a) is satisfied,
define $o(f)$ as follows.
Let $\mu:L^Q\to A$ and $\nu:S^Q\to M$
be triangulating homeomorphisms, and define
$\gamma:L \to L\times \{\bar 0\}
\subseteq L^Q$, $\delta:S\to S^Q$, $\epsilon:L^Q\to L$, and $\zeta:S^Q\to S$ be the inclusions and projections.  Set
$h=\zeta\nu^{-1}f\mu\gamma:L\to S$.
Choose a cellular approximation $\phi$ to $h$, and let $\hat L =\phi^{\ast}(\tilde S)$.  Then
let $\tilde \phi = \phi_{\ast}:\hat L\to \tilde S$ be the
canonical map and denote by $C^{cone}_{\ast}(\tilde \phi)$ the algebraic mapping cone of $\tilde \phi$.
A choice of lift $\tilde e$ for each cell $e$ of $L$ and $S$ gives a basis for $C^{cone}_{\ast}(\tilde \phi)$ as a finitely
generated free $\mathbb{Z}[\pi]$-chain complex and as a finitely generated $\mathbb{Z}[\pi\times G]$-chain complex where $G$
acts trivially on the modules.

In \cite{cwy}, $C^{cone}_{\ast}(\tilde \phi)$ is shown to determine an element $[C^{cone}_{\ast}(\tilde\phi)]$ of
$\tilde K_0(\mathbb{Z}[\pi\times G])$.  This is the obstruction in \cite[Theorem 1]{cwy}.  Set $o(f) = [C^{cone}_{\ast}(\tilde \phi)]$.  This element is an invariant of
the $\mathbb{Z}[\pi\times G]$-chain homotopy equivalence
class of $C^{cone}_{\ast}(\tilde \phi)$. If $\phi_1$ is a second cellular approximation of $h$, it is homotopic to $\phi$,
hence cellularly homotopic to $\phi$ by the relative Cellular Approximation Theorem.
If $\Phi$ is a cellular homotopy from $\phi=\phi_0$ to $\phi_1$, then its mapping cylinder
$M_{\Phi}$ cellularly deformation retracts to
$M_{\phi}$ and to $M_{\phi_1}$ relative to $S$ extending deformation retractions of $L\times [0,1]$ to
$L\times \{0\}$ and to $L\times \{1\}$.  Setting $\check L= \tilde\Phi^{\ast}(\tilde S)$ and
$\check\Phi = \tilde \Phi_{\ast}:\check L\to \tilde S$, we see that
these deformations lift to $\pi$-equivariant deformations of $M_{\check \Phi}$ to $M_{\tilde\phi}$ and to
$M_{\tilde\phi_1}$ relative to $\tilde S$ that keep $\check L$ in itself.  The deformations produce $\mathbb{Z}[\pi]$-chain homotopies showing that $C^{cone}_{\ast}
(\phi)$ and $C^{cone}_{\ast}(\phi_1)$ are $\mathbb{Z}[\pi]$-chain homotopy equivialent $\mathbb{Z}[\pi]$-chain complexes,
and as $G$ acts trivially, they are chain homotopy equivalent $\mathbb{Z}[\pi\times G]$-chain complexes.
Therefore $[C^{cone}_{\ast}(\tilde\phi)]\in \tilde K_0(\mathbb{Z}[\pi\times G])$ is independent of the
choice of cellular approximation $\phi$ of $h$.

To see that $o(f)$ is independent of the choice of triangulations $\mu$ and $\nu$, let
$\mu_i:L_i^Q\to A$ and
$\nu_i:S_i^Q\to M$ be triangulations, $i=1,2$, with associated injections and projections
$\gamma_i:L_i\to L_i^Q, \delta_i:S_i\to S_i^Q, \epsilon_i:L_i^Q\to L_i$, and $\zeta_i:S_i^Q\to S_i$.
Define $h_i=\zeta_i\nu_i^{-1}f\mu_i\gamma_i:L_i\to S_i, \eta =
\epsilon_2\mu_2^{-1}\mu_1\gamma_1:L_1\to L_2$, and $\theta=
\zeta_2\nu_2^{-1}\nu_1\delta_1:S_1\to S_2$.  These are not necessarily cellular maps.
However, we have shown that our obstruction is
independent of the choice of cellular approximation to the $h_i$'s, so we shall for simplicity of
notation henceforth assume that $h_i$ has been replaced by a cellular approximation and write
$C^{cone}_{\ast}(\tilde h_i)$ instead of $C^{cone}_{\ast}(\tilde \phi_i)$.

Our notation convention on pull-backs specifies covering spaces $\tilde A, \widetilde {L^Q_i}, \tilde L_i, \widetilde{S^Q_i}$,
and $\tilde S_i$ and maps $\tilde f, \tilde \mu_1, \tilde \gamma_i$ and $ \tilde\nu_i$ pulled back from the universal cover, $\tilde M$ of $M$.  It also
specifies
$\tilde \epsilon_i=\epsilon_{i\ast}:\epsilon^{\ast}(\tilde L_i)\to \tilde L_i$ and
$\tilde \zeta_i=\zeta_{i\ast}:\zeta^{\ast}(\tilde S_i)\to \tilde S_i$.  Now, $\epsilon_i^{\ast}(\tilde L_i)$ and $\zeta_i^{\ast}(\tilde S_i)$ are not $\widetilde {L_i^Q}$ and $\widetilde {S_i^Q}$: since $L_i^Q= L_i\times Q$ and
$S_i^Q=S_i\times Q$ and $\epsilon_i$ and $\zeta_i$ are the projections, $\epsilon_i^{\ast}(\tilde L_i)$
and $\zeta_i^{\ast}(\tilde S_i)$ are
$\tilde L_i\times Q$ and $\tilde S_i\times Q$, respectively, and $\tilde \epsilon_i$ and $\tilde \zeta_i$
are the projections.  There are natural covering space isomorphisms $\xi_i:\widetilde {L^Q_i}\to
\epsilon_i^{\ast}(\tilde L_i)$ and $\kappa_i:\widetilde {S^Q_i}\to \zeta_i^{\ast}(\tilde S_i)$ generated by the
Covering Homotopy Property by lifting to $\widetilde {L_i^Q}$ and $\widetilde {S_i^Q}$ the straight line
homotopies of $L_i^Q$ and  $S_i^Q$ given by $((x,y),t)\mapsto (x,(1-t)y)$ with initial lifts the identities of
$\widetilde{L_i^Q}$ and $\widetilde {S_i^Q}$.

We now have maps $\tilde h_i=\tilde \zeta_i\kappa_i\tilde \nu_i^{-1}\tilde f \tilde \mu_i\tilde
\gamma_i:
\tilde L_i\to \tilde S_i,\tilde \eta=\tilde\epsilon_2\xi_2\tilde
\mu_2^{-1}\tilde\mu_1\tilde\gamma_1:\tilde
L_1\to \tilde L_2$, and $\tilde\theta=\tilde
\zeta_2\kappa_2\tilde\nu_2^{-1}\tilde\nu_1\tilde\delta_1:\tilde S_1\to
\tilde S_2$.  A diagram chase shows that $\eta$ and $\theta$ are homotopy equivalences and
that $\theta h_1\simeq h_2\eta$.  We need to show that $C^{cone}_{\ast}(\tilde h_1)$ is $\mathbb{Z}[\pi]$-chain
homotopy equivalent to $C^{cone}_{\ast}(\tilde h_2)$.  This follows from  Lemma \ref{cylinderlemma}(2)
and cellular approximation.  This completes the argument that $o(f)$ is well-defined.

If there is a compact $Q$-manifold $N$ containing $A$ and a cellular semifree action
$\alpha$ of G on $N$ with fixed point set $A$ and an invariant homotopy equivalence $\bar f:N\to M$
extending $f$, \cite[Section 2]{cwy} shows that condition 2(a) is satisfied.  The cellularity condition is that
there is a pair $(K,L)$ of cell complexes, a semifree cellular $G$-action $\beta$ on $K$
with fixed point set $L$, and a triangulation $ \mu:(K^Q,L^Q)\to (N,A)$ conjugating $\beta^Q$ to
$\alpha$.  Let $\delta:S\to S\times \{\bar 0\}\subseteq S^Q$ triangulate $M$ and $\zeta:S^Q\to S$ be the
projection.  Then
 $g = \zeta\nu^{-1} f \mu\gamma$  and $K, \beta$, and $ \bar g = K \to K\times {\bar 0}\to K^Q\overset{\mu}{\to} N\overset {\bar f} {\to} M\overset{\nu^{-1}}{\to}S^Q\overset {\zeta}{\to} S$ satisfy condition (1) of
\cite[Theorem 1]{cwy}.  Therefore, the obstruction $o(f) $ vanishes in
$\tilde K_0(\mathbb{Z}[\pi \times G])$, and condition (2) of the Theorem is satisfied.

On the other hand, if condition (2) holds,
then $o(f)=[C^{cone}_{\ast}(\tilde h)]=0$, so Theorem 1 of \cite{cwy} provides a cell complex $K$ containing $L$ as a
subcomplex, a semifree cellular action $\beta$ of $G$ on $K$ with fixed point set $L$, and a homotopy
equivalence $g:K\to S$ extending $h$.  Set $N= K^Q\cup_{\mu}A$, where $\mu$ identifies $L^Q$ with $A$, and let $\psi:K^Q\to N$ be the natural map,
which is a homeomorphism.  It induces a cellular
action $\psi\beta^Q \psi^{-1}$ on $N$.  Now define $f_0$ to be the homotopy equivalence
$\nu g^Q\psi^{-1}:N\to M$.  On $A$, $f_0=\nu g^Q \mu^{-1}=\nu h^Q\mu^{-1}\simeq
\nu \delta h \epsilon \mu^{-1}=\nu \delta\zeta \nu^{-1}f\mu \gamma\epsilon \mu^{-1}\simeq f$.  Applying the Homotopy Extension Theorem to $(N\times \{0\})\cup (A\times [0,1])$ yields a homotopy $F:N\times [0,1]\to M$ from $f_0$ to a map $f_1$ that extends $f$.  This is the desired
homotopy equivalence $\bar f$ of statement (1) of the Theorem.
  \end{proof}

{\begin{remark} Our applications are to subsets $A$ of $M$, so that is how we have formulated Theorem 6.4.  The statement and proof in \cite{cwy} are for a cellular map $f:L \to K$ of finite cell complexes.  The argument above extends to the case $f:A\to M$ where $A$ is a compact $Q$-manifold not contained in $M$ by approximating $f$ with a homotopic embedding.
\end{remark}}

\subsection{ Applying Cappell-Weinberger-Yan II: More Cellular Actions, \ANR\ Actions}

We first note that in general for compact
groups~$H$ that act based-freely on $Q$, the following holds.

\begin{theorem}\label{fixedZ-set} Let $X$ be a Z-set in $M$ and denote the inclusion $X\to M$ by~$f$.
If there exist a compact $Q$-manifold $N$ containing $X$, a semifree action $\alpha$ of
$H$ on $N$
with fixed point set $X$, and a homotopy equivalence $\bar f:N\to M$ extending $f$, then~$X$ is the fixed
point set of a semifree $H$-action on $M$ induced by a homeomorphism $g:N\to M$  that is homotopic to $
\bar f$  if and only if the Whitehead torsion $\tau (\bar f)\in Wh(\pi)$ is $0$, where $\pi$ is the fundamental group of $M$.  The induced action is an \ANR\
action if $\alpha $ is an \ANR\ action.
 \end{theorem}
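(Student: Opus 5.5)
The proof has two directions, and both rest on Chapman's theorem recalled above: a homotopy equivalence between compact connected $Q$-manifolds is homotopic to a homeomorphism if and only if its Whitehead torsion vanishes.

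For the ``only if'' direction, suppose $g:N\to M$ is a homeomorphism homotopic to $\bar f$ which carries $\alpha$ to an action $g\alpha g^{-1}$ on $M$ with fixed point set $X$. Since $g$ is a homeomorphism, $\tau(g)=0$ by topological invariance of torsion (Chapman). Torsion is a homotopy invariant, so $\tau(\bar f)=\tau(g)=0$.

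For the ``if'' direction, assume $\tau(\bar f)=0$. Then Chapman's theorem gives a homeomorphism $g_0:N\to M$ homotopic to $\bar f$. The difficulty is that $g_0$ need not restrict to the identity on $X$, so the fixed point set of the transported action is $g_0(X)$ rather than $X$. I will correct this in four steps.
\begin{enumerate}
\item The restriction $g_0|_X$ is homotopic to $\bar f|_X=f$, the inclusion of $X$.
\item $X$ is a Z-set in $M$ by hypothesis. By the argument of Proposition~\ref{collection}, I first replace $\alpha$ by a conjugate action whose fixed point set is a Z-set copy of $X$ in $N$, homotopic to the original inclusion in $N$. Then $g_0(X)$ is a Z-set in $M$ as well.
\item The Homeomorphism Extension Theorem (Section~\ref{infdim}) applies to the homotopy $X\times[0,1]\to M$ from $g_0|_X$ to $f$. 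Its two ends are Z-embeddings, and the homotopy is proper because $X$ is compact. This yields an ambient isotopy $H$ of $M$ with $H_0=\identity$ and $H_1 g_0|_X=f$.
\item Put $g=H_1 g_0$. This is a homeomorphism $N\to M$ that restricts to the identity on $X$ and is homotopic to $g_0$, hence to $\bar f$. The induced action $\beta(h,x)=g\,\alpha(h,g^{-1}(x))$ is semifree with $\Fix(\beta)=g(X)=X$.
\end{enumerate}
For the last sentence of the theorem: being a $G$-\ANR\ is preserved under equivariant homeomorphism, so $\beta$ is an \ANR\ action whenever $\alpha$ is.

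The step I expect to be the main obstacle is ensuring that $X$ sits in $N$ as a Z-set. The hypothesis only says $N$ contains $X$. If $X$ is not a Z-set in $N$, then $g_0(X)$ is not a Z-set in $M$, and the Homeomorphism Extension Theorem cannot be applied in step 3. I would first try to stabilize, replacing $N$ by $N\times Q$ and $\alpha$ by $\alpha\times\sigma$ with $\sigma$ based-free, so that the fixed point set becomes $X\times\{\bar 0\}$, which is a Z-set. I would then use $N\times Q\approx N$ via a homeomorphism close to the projection. The catch is that this homeomorphism moves $X$ slightly, so one must check that the conjugated fixed point set is still homotopic to the original embedding and that the torsion of the composite map to $M$ is unchanged. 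The torsion part is automatic, because the replacement is a homeomorphism and therefore has zero torsion.
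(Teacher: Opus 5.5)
Once you build in the stabilization from your last paragraph, this is essentially the paper's proof. The paper takes $\alpha\times\theta$ on $N^Q$ with $\theta$ based-free (Theorem~\ref{point}), transports it to $N$ by a homeomorphism $\zeta$ homotopic to the projection, and then applies the Homeomorphism Extension Theorem to the Z-embedding $h\zeta i\simeq f$, where $h$ is a homeomorphism homotopic to $\bar f$. The check you worry about is immediate, since $\zeta i$ is homotopic to the projection composed with $i$, which is the inclusion $X\to N$; for the \ANR\ clause, the stabilizing action must itself be an $H$-\ANR\ action (Theorem~\ref{point} supplies one), or the product action need not be an \ANR\ action.
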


\begin{proof} If there is such a $g$, then $\tau(g) = 0$ by \cite{chapman:simpleforANR's}, and so is $\tau(\bar f)$ as
$\bar f$ is homotopic to $g$.  On the other hand, if $\tau(\bar f) = 0$, by Theorem \ref{point} there is a based-free action~$\theta$ of $H$ on $Q$ with fixed point $\bar 0$.  Then
the diagonal action $\beta = \alpha \times \theta $ on $N^Q$ is a
semifree $H$-action on $N^Q$.  Its fixed point set $X\times \{\bar 0\}$ is a Z-set in $N^Q$,
and if $\alpha$ and $\theta$ are \ANR\ actions, so is~$\beta$.  Let $i: X\to N^Q$ be the inclusion $x\mapsto
(x,\bar 0)$.  The projection $N^Q \to N$ is homotopic to a homeomorphism $\zeta$
\cite[15.1]{chapman:lectures}.  Denote by $\gamma$ the $H$-action on $N$ induced from
$\beta$ by $\zeta$.  Since $\tau(\bar f)=0$, $\bar f$ is homotopic to a homeomorphism $h:N\to M$.  Now
$h\zeta i$ is an embedding of $X$ in $M$ as a Z-set and is homotopic to $f$, so by the Homeomorphism Extension Theorem (see Section \ref{infdim}), there is a
homeomorphism $\eta:M\to M$ homotopic to the identity that extends $(h\zeta i)^{-1}$.   Then $g=\eta h$
induces the desired action of $G$ on $M$ from $\gamma$.
\end{proof}

Stabilizing Theorem 3 of \cite{cwy} and applying Theorem \ref{fixedZ-set} yields the following result.  Theorem 3 of \cite{cwy} differs from Theorem 1 in that $g$ (our $\bar f$ ) is required to be a {\em simple} homotopy equivalence and the obstruction lies in $Wh^T(\pi\subset \pi \times G)$.
(See \cite{av} for the development of  $Wh^T$, where it is denoted by $Wh^T_1$.
We follow \cite{cwy} in denoting $Wh_0(\pi)$ by the more familiar $\tilde K_0(\mathbb{Z}[\pi])$ and also drop \cite{av}'s subscript on the Whitehead group $Wh_1(\pi)$ for consistency with our usage elsewhere.) We denote the orbit space by $M/\gamma$ and the orbit map by $q:M\to M/\gamma$, and for simplicity we abuse notation by identifying the fixed point set $A_1$ of $\gamma$ with its image in $M/\gamma$.

\begin{theorem}\label{QM:simple}  Let $A\subseteq M$ be $Q$-submanifold Z-set.
 Denote the inclusion by $f:A\to M$, and let $\pi$ be the fundamental group of $M$.  There is a
semifree cellular action $\gamma$ of $G$ on $M$ with fixed point set $A$ if and only if

\begin{enumerate}

\item $ f$ induces isomorphisms $\tilde H_i( A;\mathbb{F}_p\pi)\to \tilde H_i( M;\mathbb{F}_p\pi)$ for all $i$ and for each prime $p$ that divides the order of $G$,

\item an obstruction $\sigma( f)\in Wh^T(\pi \subset \pi \times G)$ vanishes.

\end{enumerate}
\end{theorem}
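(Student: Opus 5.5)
The plan is to reduce Theorem \ref{QM:simple} to the stabilized form of \cite[Theorem 3]{cwy}, in parallel with the proof of Theorem \ref{QM:cellhomotopy}, and then to pass from the auxiliary manifold $N$ to $M$ itself by Theorem \ref{fixedZ-set}.

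\emph{Defining the obstruction.} As in Theorem \ref{QM:cellhomotopy}, triangulate $A$ by $\mu:L^Q\to A$ and $M$ by $\nu:S^Q\to M$. Set $h=\zeta\nu^{-1}f\mu\gamma:L\to S$ and replace $h$ by a cellular approximation $\phi$. When condition (1) holds, \cite[Theorem 3]{cwy} attaches to $\phi$ an element $\sigma(\phi)\in Wh^T(\pi\subset\pi\times G)$, built from the based $\mathbb{Z}[\pi\times G]$-complex $C^{cone}_{\ast}(\tilde\phi)$ together with its torsion data. We put $\sigma(f)=\sigma(\phi)$.

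\emph{Well-definedness.} Independence of the cellular approximation follows from the mapping-cylinder deformation argument already used for $o(f)$. Independence of the triangulations is the point where simple homotopy enters. The comparison maps $\eta:L_1\to L_2$ and $\theta:S_1\to S_2$ are \emph{simple} homotopy equivalences, by Chapman's topological invariance of torsion \cite{chapman:torsion}. Lemma \ref{cylinderlemma}(3) then shows that the induced chain equivalence between the two algebraic mapping cones has zero torsion in $Wh(\pi)$. Consequently the based chain complexes agree up to simple equivalence, so their classes in $Wh^T$ coincide. I expect this verification to be the main obstacle: one has to check that the $Wh^T$ class of \cite{av,cwy} is invariant under simple chain equivalences of based complexes, and Lemma \ref{cylinderlemma}(3) appears to have been prepared precisely for this step.

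\emph{Necessity.} Suppose $\gamma$ is a semifree cellular action on $M$ with fixed point set $A$. Then $\gamma$ comes from a cellular action $\beta$ on a pair $(K,L)$ with $\Fix(\beta)=L$, via a cell structure of $M$. Condition (1) follows from \cite[Section 2]{cwy}, exactly as in Theorem \ref{QM:cellhomotopy}. For condition (2), note that $K\to K^Q\approx M\to S^Q\to S$ is a simple homotopy equivalence, since both are cell structures on the same $Q$-manifold \cite{chapman:torsion}, and it extends $h$ up to homotopy. Hence \cite[Theorem 3]{cwy} gives $\sigma(f)=0$.

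\emph{Sufficiency.} Assume (1) and $\sigma(f)=0$. Then \cite[Theorem 3]{cwy} produces a finite complex $K\supseteq L$ with a semifree cellular $G$-action fixing exactly $L$, together with a simple homotopy equivalence $g:K\to S$ extending $h$. As in Theorem \ref{QM:cellhomotopy}, form $N=K^Q\cup_\mu A$. It carries the stabilized cellular action with fixed point set $A$, and it has a homotopy equivalence $\bar f=\nu g^Q\psi^{-1}$, which we adjust by homotopy extension so that it extends $f$. Because $g$ is simple, $\tau(\bar f)=0$; the small homotopy adjustment does not change the torsion. Now Theorem \ref{fixedZ-set} gives a homeomorphism $N\to M$ homotopic to $\bar f$, and $A$ is the fixed point set of the induced action on $M$.

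That induced action should still be cellular. The proof of Theorem \ref{fixedZ-set} passes through a diagonal product with a based-free action, and that would destroy cellularity. I will avoid this detour: since $A$ is already a Z-set in $M$, take the homeomorphism $h:N\to M$ homotopic to $\bar f$ and correct it on $A$ by the Homeomorphism Extension Theorem, as in the last lines of that proof, to obtain a homeomorphism of $M$ extending $(h|_A)^{-1}$. Conjugating the cellular action of $N$ by the composite then gives a cellular action on $M$ with fixed point set $A$.
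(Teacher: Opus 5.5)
The step that fails is the last one. The Homeomorphism Extension Theorem needs both $f$ and $h|_A$ to be Z-embeddings. But $h(A)$ is a Z-set in $M$ only if $A=\psi(L^Q)$ is a Z-set in $N$, that is, only if $L\times Q$ is a Z-set in $K\times Q$.

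For $L\neq\emptyset$ and $G\neq 1$ this never holds. Take a $G$-invariant triangulation of $K$ in which $L$ is full, and let $U$ be the open star of a vertex of $L$. Since Z-sets are homotopy negligible in open sets, $(U\setminus L)\times Q\hookrightarrow U\times Q$ would be a homotopy equivalence, so $U\setminus L$ would be contractible. But $U\setminus L$ is homotopy equivalent to the subcomplex of the link spanned by vertices not in $L$. That is a nonempty finite complex on which $G$ acts freely, and a contractible one would contradict the Lefschetz fixed point theorem.

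This is exactly why the proof of Theorem~\ref{fixedZ-set} passes to $N\times Q$ with a based-free factor: to make the fixed point set a Z-set. As you note, that detour destroys cellularity. So the step cannot be repaired, because a nonempty fixed point set of a cellular action is never a Z-set. Yet for $M=Q$, $A$ a Z-set copy of $Q$ and $G=\mathbb{Z}_2$, conditions (1) and (2) both hold; for (2), $Wh^T(1\subset\mathbb{Z}_2)$ sits between $Wh(1)=0$ and $\tilde K_0(\mathbb{Z}[\mathbb{Z}_2])=0$. This is the case of Corollary~\ref{QM:1conn}.

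The paper's proof makes the same move when it asserts that $\hat g|_{L^Q}$ may be made equal to $h^Q$ by the Homeomorphism Extension Theorem, so it does not close this gap either. What your sufficiency argument does prove, stopped before that correction, is weaker: (1) and (2) yield a semifree cellular action on $M$ whose fixed point set is a copy of $A$, embedded by a map homotopic to $f$. This matches the Introduction's phrase ``homeomorphic to the fixed point set''.

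Two further gaps, where the paper proceeds differently:

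\emph{Definition and well-definedness of $\sigma(f)$.} You use independent triangulations of $A$ and $M$ plus cellular approximations. Then the comparison square $\theta h_1\simeq h_2\eta$ commutes only up to homotopy, whereas Lemma~\ref{cylinderlemma}(3) assumes $f_2g=hf_1$ on the nose. The paper instead triangulates the pair, $(S^Q,L^Q)\approx(M,A)$ \cite[Lemma 37.1]{chapman:lectures}, so $h$ is the inclusion $L\subseteq S$ and the square commutes strictly. It then settles the step you flagged as the main obstacle. By \cite[Lemma 1.8]{av}, the algebraic mapping cone of the induced chain equivalence $C^{cone}_\ast(\tilde h_1)\to C^{cone}_\ast(\tilde h_2)$ has class $\sigma(h_2)-\sigma(h_1)$. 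That cone is contractible with zero torsion, so its class is the image of $0$ under $Wh(\pi)\to Wh^T(\pi\subset\pi\times G)$. Similarly, independence of the cellular approximation needs a \emph{simple} chain equivalence, not just the chain homotopy equivalence used for $o(f)$. The collapses provide one, but this must be said.

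\emph{Necessity.} The composite $K\to K^Q\approx M\to S^Q\to S$ is not $G$-invariant, because $\nu$ ignores the action. So it is not the invariant simple homotopy equivalence that \cite[Theorem 3]{cwy} requires. Without the Z-set hypothesis no such map need exist: the stabilized reflection of $S^2$ is a cellular action with fixed point set $S^1\times Q$, and it violates (1). With the hypothesis, this direction holds only vacuously, by the argument above.
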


\begin{proof}
If $\gamma$ is a cellular semifree $G$-action on $M$ with fixed point set $A
$ as in the statement of the Theorem, then there is semifree cellular $G$-action $\alpha$ on a finite
cell complex $S$ with fixed point set $L$ with $h:L\to S$ the inclusion and a homeomorphism
$\nu:(S^Q,L^Q) \approx (M,A)$ such that $f = h^Q$ and $\gamma = \alpha^Q$.  Then
\cite[Theorem 3]{cwy} shows that
the class $ [C_{\ast}^{cone}(\tilde h)] = 0 \in Wh^T(\pi\subset\pi\times G)$.
Now $[C_{\ast}^{cone}(\tilde h)] $ maps to $o(f)\in \tilde K_0(\mathbb{Z}[\pi\times G])$ in the following exact sequence from \cite{av}:
$$
    \dots\to Wh(\pi)\overset{\beta} \to Wh^T(\pi\subset\pi\times G)\overset{\alpha}\to \tilde K_0(\mathbb{Z}[\pi\times G])\to \tilde K_0(\mathbb{Z}
    [\pi]).
$$

Define $\sigma(f) = [C_{\ast}^{cone}(\tilde h)] $.  The proof that
$\sigma(f)$ is well-defined is similar to the proof that $o(f)$ in Theorem \ref{QM:cellhomotopy} is well-defined.   We use the notation in its proof.  If $\nu_i:(S_i^Q,L_i^Q)\to (M,A),i=1,2$ are triangulations as pairs,
set $\mu_i=\nu_i|_{L_i^Q}$.  Also, let $\gamma_i:L_i\to
L_i\times \{\bar 0 \}\subseteq L_i^Q$ and $\delta_i:S_i\to
S_i\times \{\bar 0\}\subseteq S_i^Q$ be the indicated inclusions, and
let $\epsilon_i:L_i^Q\to L_i$ and $\zeta_i:S_i^Q\to S_i$ be the projections.  Then
$h_i=\zeta_i\nu_i^{-1}f\mu_i \gamma_i:L_i\to S_i$ are the inclusions.
Let $\eta:L_1\to L_2$ be $\epsilon_2\mu_2^{-1}\mu_1 \gamma_1$ and
$\theta = \zeta_2\nu_2^{-1}\nu_1 \delta_1$.  As $\eta^Q$ and $\theta^Q$ are homotopic to
the homeomorphisms $\mu_2^{-1}\mu_1$ and $\nu_2^{-1}\nu_1$, respectively, $\eta$ and $\theta$
are simple homotopy equivalences.  Because $h_1$ and $h_2$ are inclusions, $\theta h_1=h_2 \eta$ so
$\theta$ and $\eta$ define a map $\psi:M(h_1)\to M(h_2)$.  Then $\tilde \psi_{\ast}:C_{\ast}(M(\tilde h_1))
\to C_{\ast}(M(\tilde h_2))$ induces a chain map $\tilde \xi_{\ast}:C^{cone}_{\ast}(\tilde h_1)\to C^{cone}_{\ast}(\tilde h_2)$.  By Lemma \ref{cylinderlemma}(3), $\tau(C^{cone}_{\ast}(\tilde \xi_{\ast}))=0\in
Wh(\pi)$, so in $Wh^T(\pi\subset\pi\times G)$, $[C^{cone}_{\ast}(\tilde \xi_{\ast})]=\beta(\tau(C_{\ast}^{cone}(\tilde \xi_{\ast})))=0$.

Since $o(f)\in \tilde K_0(\mathbb{Z}[\pi\times G])$ is well-defined,
$\sigma(\tilde h_2)-\sigma(\tilde h_1 )\overset{\alpha}\to 0$.  Let  $(D,\partial_D)$ denote the chain complex with $D_n=C^{cone}_{n-1}(\tilde h_1)$ and $(\partial _D)_n= -\partial_{n-1}$, where $\partial_{n-1}: C^{cone}_{n-1}(\tilde h_1) \to C^{cone}_{n-2}(\tilde h_1)$ is the boundary map of $C^{cone}_{\ast}(\tilde h_1)$.
Then $[D_{\ast}]=-[C^{cone}_{\ast}(\tilde h_1)]\in Wh^T(\pi\subset\pi\times G)$.
Now $C^{cone}_{n}(\tilde\xi_{\ast})=D_{n}\oplus C^{cone}_{n}(\tilde h_2)$ and there is a short exact
sequence of chain complexes $0\to C^{cone}_{\ast}(\tilde h_2)\to C^{cone}_{\ast}(\tilde\xi_{\ast})\to D_{\ast}\to 0$.
By \cite[Lemma 1.8]{av},
$[C_{\ast}(\tilde\xi_{\ast})]=[D_{\ast}]+[C^{cone}_{\ast}(\tilde h_2)]=
[C^{cone}_{\ast}(\tilde h_2)]-[C^{cone}_{\ast}(\tilde h_1)]=\sigma(\tilde h_2)-\sigma(\tilde h_1)\in
Wh^T(\pi\subset\pi\times g)$.  Thus, $\sigma(\tilde h_2)-\sigma(\tilde h_1)=\beta(\tau(C^{cone}_{\ast}(\tilde \xi_{\ast})))=0$ and $\sigma(f)$ is independent of the triangulation used.

 If Conditions (1) and (2) are satisfied, then $\sigma( f) = [C_{\ast}^{cone}(\tilde h)]= 0$, so by  \cite[Theorem 3]{cwy} there are a
pair $(N,L)$ of finite cell complexes, a semifree cellular action $\alpha$ of $G$ on $N$ with fixed
point
set $L$, and an invariant simple homotopy equivalence $g:N\to S$ that extends $h$.
By \cite[Theorem 5.2 (implicit in the proof which is written for simplicial complexes)]{West70}, see \cite[Theorem 38.1]
{chapman:lectures}, $g^Q$ is homotopic to a homeomorphism $\hat g:N^Q\to S^Q$, and since
$\hat g|_{L^Q}\simeq h^Q$, we may require by the Homeomophism Extension Theorem (see Section \ref{infdim}) that in fact $\hat g|_{L^Q}=h^Q$.
Define $\gamma$ to be the cellular action on $M$ induced from $\alpha^Q$ by $\nu\hat g$.
\end{proof}

\begin{remark}${}$
\begin{enumerate}
\item For any other Z-embedding  $f_1:A\to A_1\subset M$ homotopic to $f$, triangulation $(T^Q,U^Q)\approx (M,A_1)$, and inclusion $\chi:U\to T$
there is by the Homeomorphism Extension Theorem a homeomorphism of pairs, $\kappa:(M,A)\to (M,A_1)$, so
$\lambda:S\to S\times {0}\to S^Q\overset {\kappa}{\to} T^Q\to T$ and $\mu= \lambda|_L:L\to U$ are simple homotopy equivalences.  By Lemma \ref{cylinderlemma}(3), the induced chain homotopy
equivalence $\tilde \lambda_{\ast}:C^{cone}_{\ast}(\tilde h)\to C^{cone}_{\ast}(\tilde\chi)$ between chain complexes of the cellular mapping
cones has vanishing torsion $\tau(\tilde \lambda_{\ast})$ and the argument for $\tilde\xi_{\ast}$ applies to
show that $\sigma(f_1)=\sigma(f)$, so $\sigma( f)$ depends only on the homotopy class of $f$ among the Z-embeddings of $A$.

\item Again, if $A$ is not a subset of $M$ and $f:A\to M$ is a map, then the
argument extends to this case by replacing $f$ with a homotopic embedding $f_1$ as a Z-set.
\end{enumerate}
\end{remark}

\begin{corollary}\label{QM:1conn} If $M$ is simply connected and $G = \mathbb{Z}_q$, for a prime
$q\le 19$, then a Z-set sub $Q$-manifold $A$ in $M$ is the fixed point set of a cellular $G$-action if and only if
the inclusion $f:A\to M$ induces
isomorphisms from $\tilde H_i( A;\mathbb{Z}_p)$ to $\tilde H_i (M ; \mathbb{Z}_p)$ for all~$i$ and for each
prime $p$ dividing the order of $G$.
\end{corollary}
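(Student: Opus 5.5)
Since $M$ is simply connected, $\pi$ is trivial, so $\mathbb{F}_p\pi=\mathbb{F}_p$ and condition (1) of Theorem~\ref{QM:simple} is exactly the stated homological condition on $f$ with $\mathbb{Z}_p$ coefficients. The plan is therefore to apply Theorem~\ref{QM:simple} with $\pi=1$ and $G=\mathbb{Z}_q$, and to show that the obstruction group $Wh^T(1\subset \mathbb{Z}_q)$ vanishes for primes $q\le 19$. Once that holds, condition (2) is automatic and the corollary follows in both directions.

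To compute $Wh^T(1\subset\mathbb{Z}_q)$, I would use the exact sequence of Assadi--Vogel quoted in the proof of Theorem~\ref{QM:simple}:
$$
Wh(1)\overset{\beta}\to Wh^T(1\subset \mathbb{Z}_q)\overset{\alpha}\to \tilde K_0(\mathbb{Z}[\mathbb{Z}_q])\to \tilde K_0(\mathbb{Z}).
$$
Since $Wh(1)=0$, the map $\alpha$ is injective. Moreover $\tilde K_0(\mathbb{Z})=0$, so $Wh^T(1\subset\mathbb{Z}_q)$ embeds in $\tilde K_0(\mathbb{Z}[\mathbb{Z}_q])$.

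For $q$ prime, Rim's theorem identifies $\tilde K_0(\mathbb{Z}[\mathbb{Z}_q])$ with the ideal class group of the cyclotomic ring $\mathbb{Z}[\zeta_q]$. By the classical results of Kummer and Montgomery--Uchida (Masley--Montgomery), this class group is trivial exactly for $q\le 19$. Hence $Wh^T(1\subset\mathbb{Z}_q)=0$ for these primes, so $\sigma(f)=0$, and Theorem~\ref{QM:simple} gives the equivalence. The only prime dividing $|G|$ is $q$ itself, so the homology condition concerns $\mathbb{Z}_q$ coefficients only.

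The main obstacle is confirming that the Assadi--Vogel sequence continues as written, so that $\alpha$ really is injective when $Wh(1)=0$; I would take this from \cite{av}. If the sequence were not exact at $Wh^T$ in the needed way, the fallback is the direct route: Theorem~\ref{QM:cellhomotopy} with $o(f)\in\tilde K_0(\mathbb{Z}[\mathbb{Z}_q])=0$ produces $N$ and an invariant homotopy equivalence. Because $Wh(1)=0$, every homotopy equivalence is simple, and Theorem~\ref{fixedZ-set} then transfers the action to $M$. For cellularity of the transferred action, I would additionally invoke the simple-homotopy version \cite[Theorem 3]{cwy} together with $Wh(1)=0$.
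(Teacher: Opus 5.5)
Your proposal is correct and matches the paper's proof. With $\pi=1$, the Assadi--Vogel exact sequence together with $Wh(1)=\tilde K_0(\mathbb{Z})=0$ identifies $Wh^T(1\subset\mathbb{Z}_q)$ with $\tilde K_0(\mathbb{Z}[\mathbb{Z}_q])$, which vanishes for primes $q\le 19$; the paper simply cites Milnor for this, which is the Rim and cyclotomic class number fact you invoke, so $\sigma(f)=0$ and Theorem~\ref{QM:simple} applies, and your fallback route is not needed.
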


\begin{proof}  In this case $\tilde M = M$.  In the exact sequence from \cite{av} displayed above,
$\pi = \{1\}$ as $M$ is simply connected.  Since $Wh(\{1\})$ and $\tilde
K_0([\mathbb{Z}])$ are $0$, we have
$\sigma( f) \in Wh^T(\pi\subset \pi \times G) \cong \tilde K_0(\mathbb{Z}[G])$ by the above exact sequence.  Since $\tilde K_0(\mathbb {\mathbb{Z}}[G])=0$ \cite[page 30]{milnor:algK}, we can apply Theorem \ref{QM:simple}.
\end{proof}

We apply \cite[Theorem 4]{cwy} in the following.  See \cite{cwy} for more information about~$\tilde
K^{top}_0$.

\begin{theorem}\label{QM:anrhomotopy}  Suppose $f : A \to M $ is inclusion, where $A$ is an
\ANR\ Z-set in~$M$.  Then $A$ is the fixed point set of a semifree $G$-\ANR\ action on a
compact $Q$-manifold~$N$
containing $A$ as a Z-set and there is an invariant homotopy equivalence $\bar f:N\to M$
extending $f$ if and only if the following are satisfied:

\begin{enumerate}

\item $f$ induces isomorphisms from $\tilde H_i( A;\mathbb{F}_p\pi)$ to $\tilde H_i( M;
\mathbb{F}_p\pi)$ for all $i$ and for each prime $p$ dividing the order of $G$, and

\item an obstruction $\omega(f) = [C^{cone}_{\ast}(\tilde f)] \in \tilde K^{top}_0 (\mathbb Z[\pi] \subset\mathbb Z[\pi \times G])$ vanishes, where $\pi$ is the fundamental group of $M$.
\end{enumerate}
If the Whitehead torsion $\tau(\bar f) \in Wh(\pi)$ is $0$, then $A$ is the fixed point set of a semifree
$G$-\ANR\ action of G on $M$ induced by a homeomorphism $g:N\to M$ homotopic to $\bar f$.
If $\tau(\bar f) \in f_{\ast}(Wh(\pi_1(A)))\subseteq Wh(\pi)$, then there is an \ANR\ Z-set $C\subseteq
M$ containing $A$ as a deformation retract that is the fixed point set of a semifree $G$-\ANR\ action
of G on $M$.
\end{theorem}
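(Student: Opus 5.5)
The plan is to reduce to \cite[Theorem 4]{cwy} by stabilization, in the same way Theorem~\ref{QM:cellhomotopy} was reduced to \cite[Theorem 1]{cwy}. Then Theorem~\ref{fixedZ-set} converts the resulting abstract $Q$-manifold action into an action on $M$ when the torsion vanishes.

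\emph{Defining the obstruction.} Since $A$ is a compact \ANR, $A^Q$ is a compact $Q$-manifold by Edwards' Theorem. Choose a triangulation $\nu:S^Q\to M$, and let $\zeta:S^Q\to S$ be the projection. Then $h=\zeta\nu^{-1}f:A\to S$ is a map from a compact \ANR\ to a finite complex. Form $C^{cone}_{\ast}(\tilde h)$ using the pullback of $\tilde S$, with singular chains on the \ANR\ side; this is the input to \cite[Theorem 4]{cwy}. Condition~(1) is exactly their homological hypothesis, because $\zeta\nu^{-1}$ is a homotopy equivalence inducing an isomorphism on $\pi$. Set $\omega(f)$ to be their class in $\tilde K^{top}_0(\mathbb Z[\pi]\subset\mathbb Z[\pi\times G])$. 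Well-definedness, meaning independence of $\nu$ and of the homotopy class of $h$, follows verbatim from the argument for $o(f)$ via Lemma~\ref{cylinderlemma}(2). One point needs care there: Lemma~\ref{cylinderlemma} is stated for cell complexes. I would either pass to a finite-complex model of $A$ using $A^Q\approx P^Q$ (Chapman's triangulation theorem) and simple homotopy invariance, or simply note that $\tilde K^{top}_0$ depends only on the chain homotopy type.

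\emph{Necessity and sufficiency of (1),(2).} Suppose $N$, the $G$-\ANR\ action, and $\bar f$ exist. Then $\zeta\nu^{-1}\bar f:N\to S$ is an invariant homotopy equivalence from a compact \ANR\ $G$-space extending $h$. The necessity direction of \cite[Theorem 4]{cwy} then gives (1) and $\omega(f)=0$. Conversely, given (1) and $\omega(f)=0$, \cite[Theorem 4]{cwy} produces a compact \ANR\ $Y\supseteq A$ with a semifree $G$-\ANR\ action with fixed point set $A$, together with an invariant homotopy equivalence $g:Y\to S$ extending $h$. Next I stabilize by a based-free $G$-\ANR\ action $\theta$ on $Q$ with fixed point $\bar0$, which exists by Theorem~\ref{standard action}/\ref{point}. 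This gives $N=Y\times Q$, a $Q$-manifold by Edwards' Theorem, carrying the diagonal $G$-\ANR\ action with fixed point set $A\times\{\bar0\}$, and this fixed set is a Z-set. Invariance of the resulting map needs the convention of this paper, namely that ``invariant'' means $\bar f\circ\alpha^g=\bar f$. So I take $\bar f_0=\nu\circ\delta\circ g\circ p_Y$, which is invariant because $g$ is. I then correct $\bar f_0$ on $A\times\{\bar0\}$ to equal $f$, using the homotopy $\nu\delta h\simeq f$ and the homotopy extension argument from the proof of Theorem~\ref{QM:cellhomotopy}. This correction must be done equivariantly: I would extend the homotopy over an invariant neighborhood of the fixed set via an invariant Urysohn function, since $G$ acts trivially on $A$.

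\emph{The torsion statements.} When $\tau(\bar f)=0$, Theorem~\ref{fixedZ-set} applies directly and yields the $G$-\ANR\ action on $M$ via a homeomorphism $g:N\to M$ homotopic to $\bar f$. When $\tau(\bar f)=f_{\ast}(\tau_0)$ with $\tau_0\in Wh(\pi_1(A))$, I would replace $A$ by $C=$ a mapping cylinder of a homotopy equivalence $A'\to A$ from a compact \ANR\ $A'$ realizing torsion $-\tau_0$. Then $C$ deformation retracts to $A$. Extending the action by adjoining an equivariant copy of $C$ (with trivial action) to $N$ along $A$ changes $\tau(\bar f)$ by $-f_\ast\tau_0$, which makes it zero, and Theorem~\ref{fixedZ-set} again finishes. \textbf{The main obstacle} is this last step. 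It requires constructing the realizing $A'$ and $C$ as Z-sets in $M$, and checking that gluing preserves the $G$-\ANR\ property and the $Q$-manifold structure (after stabilizing by $Q$), with the torsion computed by the sum formula \cite[17.2]{cohen}.
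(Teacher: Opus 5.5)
Up through the case $\tau(\bar f)=0$ your outline follows the paper. Necessity and sufficiency come from Theorem~4 of Cappell--Weinberger--Yan, you stabilize by a based-free $G$-\ANR\ action on $Q$ to get $N=Y\times Q$ with $\bar f$ the invariant equivalence composed with the projection, and Theorem~\ref{fixedZ-set} finishes. Your detour through a triangulation $S$ of $M$, and the equivariant correction of $\bar f_0$ on $A$, is unnecessary. That theorem already accepts the compact \ANR\ target $M$, so the paper applies it directly to $f:A\to M$ and gets a map extending $f$ itself. If you keep the detour, do the correction in the orbit space $N/G$ using Borsuk's homotopy extension theorem into the \ANR\ $M$.

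The genuine gap is in the last case. You take $C=M(\phi)$ with $\phi:A'\to A$ and glue it to $N$ along its \emph{target} end $A$. A mapping cylinder collapses onto its target end. The collapse $C\to A$ is cell-like (its point inverses are cones), hence a simple homotopy equivalence in Chapman's theory, and in cellular terms $\tau(C,A)=0$. The torsion of $\phi$ lives in the pair $(C,A')$, not $(C,A)$. Consequently $N\cup_A C\to N$ (the identity on $N$, the collapse on $C$) is cell-like, so the inclusion $N\hookrightarrow N\cup_A C$ is simple. The sum formula then gives $\tau(\bar f_C)=\tau(\bar f)$, so nothing is cancelled. The fact that $C$ deformation retracts onto $A$ so easily is exactly why it does nothing.

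The fix is to glue along the \emph{domain} end, as the paper does. Pick $\sigma\in Wh(\pi_1(A))$ with $f_\ast(\sigma)=\pm\tau(\bar f)$, with the sign chosen so that the composition and sum formulas cancel. Use Chapman's simple homotopy theory for compact \ANR's to realize $\sigma=\tau(g)$ for a homotopy equivalence $g:A\to B$. Let $C\subseteq M$ be a Z-embedded copy of $M(g)$ that is the identity on $A$. Then $A$ is still a deformation retract of $C$, since the inclusion is a cofibration and a homotopy equivalence, but not a simple one. The extension $\bar f_C$ of $\bar f$ by the inclusion of $C$ then has $\tau(\bar f_C)=0$. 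Equivalently, in your notation, glue along $A$ the mapping cylinder of a homotopy inverse $A\to A'$ of $\phi$.

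The points you singled out as the main obstacle, Z-embedding $C$ rel $A$ and the $G$-\ANR\ structure on the union, are routine by comparison. Note, though, that $N\cup_A C$ is no longer a $Q$-manifold. So you should stabilize once more by a based-free $G$-Hilbert cube before invoking Theorem~\ref{fixedZ-set}.
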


\begin{proof}  Since $Q$-manifolds are \ANR's the ``only if'' direction is covered by \cite[Theorem 4]
{cwy}.  If (1) and (2) hold, then Theorem \cite[Theorem 4]{cwy} gives an invariant homotopy
equivalence $\hat f:X\to M$ extending
$f$ for a compact \ANR\ $X$ containing $A$ as the fixed point set of a semifree $G$-\ANR\ action of
$G$.  Now set $N = X \times Q(G)$ (see Theorem \ref{standard action}) and let $i:X\to N$ by
$x\mapsto (x,\ast)$, where $\ast$ is the unique fixed point of $Q(G)$.  The diagonal action on
$N$ is a
semifree $G$-\ANR\ action with fixed point set $i(A)$, and $\bar f = \hat fq$, where $q$ is the
projection, is
invariant.  If $\tau(\bar f) = 0$, $\bar f$ is a simple homotopy equivalence and
Theorem \ref{fixedZ-set} applies.

If $\tau(\bar f) \in f_{\ast}(Wh(\pi_1(A)))\subseteq Wh(\pi)$, let $\sigma \in Wh(\pi_1(A))$ be such that $f_{\ast}(\sigma)= -\tau(\bar f)\in Wh(\pi)$.  Using Chapman's extension of simple homotopy
theory to compact \ANR's \cite{chapman:simpleforANR's}, choose a homotopy equivalence $g:A\to
B$, with $B$ a compact \ANR,
and with $\tau(g)=\sigma$.  Let $Z$ be the mapping cylinder of $g$.  Fix a pair  $(K,L)$ of cell
complexes with
$(K^Q,L^Q)$ homeomorphic to $(Z^Q,A^Q)$, and let $j:L\to K$ be the inclusion.  Then the $\mathbb
Z[\pi_1(A)]$-chain complexes of the $\pi_1(A)$-coverings $\tilde K$ and $ \tilde L$ induced
from the universal cover of $A$ with its $\pi_1(A)$-action satisfy
$[C_{\ast}(\tilde K)/C_{\ast}(\tilde L)]=[C^{cone}_{\ast}(\tilde j)]=\sigma \in Wh(\pi_1(A))$.

Let $\zeta:Z\to M$ be the
identity on $A$ and an embedding of $Z$ as a Z-set, and set $C=\zeta(Z)$.
Now let $N_C=N\cup C$.  Then $N_C$ is a $G$-\ANR\ containing $C$ as the fixed point set of the
semifree $G$-\ANR\ $G$-action extending the one on $N$, and the extension $\bar f_C$ of $\bar f$
to $N_C$ that is the identity on $C$ is a homotopy equivalence.  Thus
$$
    \tau (\bar f_C) = \tau(\bar f) + \bar f_{\ast}i_{\ast}(\tau(g)) = \tau(\bar f) + (\bar fi)_{\ast}
    (\sigma)=\tau(\bar f)
+ f_{\ast}(\sigma)=0\in Wh(\pi)
$$
and we are in the case of the first paragraph.  Apply Theorem \ref{fixedZ-set} to obtain the
desired $G$-action on $M$ from that on $N_C$.
\end{proof}

\begin{remark}\label{wh}${}$
\begin{enumerate}

\item Since $A$ has arbitrarily small open neighborhoods in $M$ of the form
$A\times Q\times [0,1)$, the fixed point set $C$ may be required to lie in any neighborhood of $A$.  If
$A$ is a $Q$-manifold, then $C$ may instead be required to be a deformation retract of $A$, using
the fact that the projection of $A\times Q$ to $A$ is homotopic to a homeomorphism \cite[15.1]
{chapman:lectures}.

\item If $\pi$ is $0$, $\mathbb{Z}^n$, $\mathbb{Z}_m$ for $m = 2, 3, 4, 6$ or a braid group,
then $Wh(\pi)=0$, \cite{bhs},\cite{milnor:wht},\cite{braids}, so $\tau(\bar f)=0$.
\end{enumerate}
\end{remark}

\section {Fixed-point sets that are Shape equivalent to \ANR's}

In this section, we study semifree $G$-actions on $Q$-manifolds with fixed point sets that are not necessarily
\ANR's but are Shape equivalent to compact \ANR's.  For an in-depth analysis of the latter question see
\cite{edgeo}.  We use an easily stated result of Geoghegan and Lacher in Corollary \ref{fd1-UV}.  In
Theorem \ref{anrShape}, we extend our Theorem~\ref {Shape} about actions on Keller spaces to $Q$-manifolds, and in Theorem \ref{shapeobstruction}
we adapt the K-theoretic obstruction of Theorem \ref{QM:anrhomotopy}.
These semifree actions are not $G$-\ANR\ actions if their fixed point sets are not
\ANR's, but they are related because their complements are the complements of $G$-\ANR\  actions.

In our context of $Q$-manifolds, it is
simplest to use Borsuk's original definition of Shape in \cite{borsuk:Shape}.  (It is now seen as very similar to pro-homotopy theory.)  For less elementary and more
sophisticated treatments, see  \cite{mardseg,dydseg, edhas, hol}.  These variants all deliver the
same isomorphism classification for compact (metric) spaces.  Borsuk
originally defined the Shape category as follows.  (We use bold face for Shape morphisms.)  Objects
are the compact subsets of the Hilbert cube.  A
\emph{fundamental sequence}) $\underline {f}$ from $B$ to $C$
is a sequence of maps $f_n:Q\to Q$ with the property that for every open neighborhood $V$
of $C$ there is a neighborhood $U$ of $B$ such that $f_n(U)$ is eventually
(i.e., for sufficiently large n) in $V$ and the $f_n$'s are
eventually homotopic in $V$ when restricted to $U$.  Two fundamental sequences
$\underline {f} =\{f_n\}_n$ and
$\underline {g} = \{g_n\}_n$ are homotopic ($\underline f \simeq \underline {g}$) provided that for every
open $V$ containing $C$,
there is a neighborhood $U$ of $B$ such that for sufficiently large $n$, $f_n|_U$ and
$g_n|_U$ are  homotopic in $V$.
The  \emph{Shape morphisms} $\boldsymbol f:B\to C$ are the homotopy classes of
fundamental sequences from $B$ to $C$ in this
sense.  Thus $\boldsymbol f$ is a Shape isomorphism provided that there is a morphism
$\boldsymbol g:C\to B$ such that representing fundamental sequences
$\underline f$ and $\underline g$ satisfy $\underline g\underline f \simeq \underline {\identity}_B$ and
$\underline f \underline g \simeq \underline {\identity}_C$.  The
spaces $B$ and $C$
have the same Shape if there is a Shape isomorphism from $B$ to~$C$.

It is immediate that this classification is
independent of the copy of the Hilbert cube and the embeddings of $B$ and $C$ in it, so we assume
that all compact spaces have been embedded in $Q$.  Note that that a map $f:B\to C$
generates a fundamental sequence $\{f_n\}_{n\ge 1}$ where each $f_n=\bar f$ where $\bar f$ is any
continuous extension of $f$ to $Q$, and that if $C$ is an \ANR, then
each fundamental sequence $\underline f:B\to C$ is homotopic to one generated by a map.
(To see this last point, let $ r:W\to C$ be a retraction to $C$ of
a closed neighborhood of $C$ in $Q$ and choose an extension $\bar r:Q\to Q$.  Now let
$V\subseteq W$ be a smaller closed neighborhood of $C$ that deformation retracts to $C$ in $W$
and let $\Lambda:V\times
[0,1] \to W$, be such a deformation retraction, with $\bar \Lambda :Q\times [0,1]\to Q$ any
extension of $\Lambda$.  Then $\underline  f \simeq \underline g$, where $g_n=\bar
\lambda_1f_n$.  If $n_0\ge 1$ is
such that there is a neighborhood $U$ of $B$ such that if $m,n\ge n_0$ then $f_m|_U$ and $f_n|_U$
are
homotopic in $V$, then $h=g_{n_0}|_B:B\to C$ has the property that for any extension $\bar h:Q\to
Q$ of
$h$, the fundamental sequence $\underline  h =\{h_n\}_{n\ge 1}$ where $h_n=\bar h$ for all $n$ is
homotopic
to $\underline g$,  hence to $\underline  f$.)

Consider a closed connected subset $X$ of $M$, i.e., a subcontinuum.  For $x_0\in X$ we use
$\check \pi_1(X,x_0) = \varprojlim \{i^{CD}_{\ast}:
\pi_1(C,x_0)\to\pi_1(D,x_0)\}$,
where
$C$ and $D$ vary over the compact \ANR\ neighborhoods of $X$ in $M$ and $x_0\in X$.  The
inclusion $f:(X,x_0)\to
(M,x_0)$ induces a homomorphism $f_\ast:\check \pi_1(X,x_0)\to \pi_1(M,x_0) = \pi$ which induces
homomorphisms $f_{\ast}:\tilde K_0(\mathbb Z [\check \pi_1(X,x_0)])\to \tilde K_0(\mathbb Z[\pi])$ and
$f_{\ast}:Wh(\check \pi_1(X,x_0))\to Wh(\pi)$.

A very quotable result of Geoghegan and Lacher \cite{geolach} which we cite in Corollary ~\ref{fd1-UV} is that a finite dimensional 1-UV compact
space is Shape equivalent to a finite cell complex if and only if its \v Cech cohomology with
integer coefficients is finitely generated.  A compact (metric) space $X$ satisfies the 1-UV condition if for some embedding of $X$ in $Q$
(and hence every embedding of $X$ in an \ANR), each neighborhood $U$ of~$X$ contains a
neighborhood $V$ of $X$ such that every map of $S^1$ into $V$ is homotopic to a constant map in
$U$.
For connected $X$, this is quickly implied, for example, by triviality of $\check \pi_1(X,x_0)$ together
with movability of $X$.  ($X$ is
\emph{movable} if for every neighborhood $U$ of $X$ in $Q$, hence in any \ANR, there is
a smaller neighborhood $V$ of
$X$ that can be deformed into any neighborhood of $X$ with the deformation occurring in
$U$.)

A space $Y$ {\em Shape dominates} $X$ provided that there are Shape morphisms $\mathbf u:X\to
Y$ and $\mathbf d:Y\to X$ such that $\mathbf d\mathbf u \,=  \,\mathbf {id}_X$.
D. A.\ Edwards and
Geoghegan have shown \cite{edgeo} that $X$ has the Shape of a compact \ANR\ (equivalently, a finite cell complex) if and only if $X$ is Shape dominated by an \ANR\ and the Shape version of the Wall finiteness obstruction in $\tilde K_0(\mathbb Z[\check \pi_1(X,x_0)])$
vanishes.

\begin{definition} Let $X$ be the fixed point set of a semifree action $\alpha$ of a compact group $H$
on a $Q$-manifold $Y$.  Call $\alpha$
\emph{quasitame} provided that $\alpha|_{Y\setminus X}$ is equivariantly
homeomorphic to $\beta|_{Y\setminus A}$ for some semifree $H$-action $\beta$ of $H$ on $Y$ with
\ANR\ fixed-point set $A$ that is a Z-set in $Y$.  Call $\alpha$ {\em semitame} if $\beta$ may be
taken to be an $H$-\ANR\ action.
\end{definition}

In view of the Theorem that if $X$ is a CE Z-set of $Q$ then the quotient
space $Q/X\approx Q$ \cite[Theorem 7.5.5]{vm:book}, Anderson's Conjecture is that if $X$ is a CE
Z-set in $Q$, then all involutions of $Q$ with fixed point set $X$ are semitame. Theorem~\ref{Shape}
extends to $Q$-manifolds as follows.

\begin{theorem}\label{anrShape} Let $A\subseteq M$ be an \ANR\,Z-set, $H$ be a compact
group and
$\alpha$ be a semifree action of $H$ on $M$ with fixed point set $A$.  Let $X$ be a Z-set in $M$
that is
Shape equivalent to $A$ and let $\boldsymbol i:X\to M$ and $\boldsymbol j:A\to M$ be the Shape
morphisms generated
by the inclusions $i$ and $j$, respectively.
If there is a Shape equivalence $\boldsymbol\kappa:X\to A$ such that $\boldsymbol j \boldsymbol
\kappa = \boldsymbol i$ then $X$ is the
fixed point set of a quasitame semifree action $\beta$ of $H$ on $M$.  If $\alpha$ is an
$H$-\ANR\ action, then $\beta$ may
be required to be semitame.
\end{theorem}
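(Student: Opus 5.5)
The plan is to imitate the proof of Theorem~\ref{Shape}, with the Chapman Complement Theorem replaced by its relative version for $Q$-manifolds. The extra hypothesis $\boldsymbol j\boldsymbol\kappa=\boldsymbol i$ is exactly what that version needs. Chapman's complement theorem for $Q$-manifolds states that if $X$ and $A$ are Z-sets in $M$ and there is a Shape equivalence $\boldsymbol\kappa:X\to A$ commuting with the Shape inclusions into $M$, then $M\setminus X\approx M\setminus A$. So there is a homeomorphism $f:M\setminus A\to M\setminus X$; I will choose it so that it is "proper towards the ends", i.e., $f$ carries small neighborhoods of $A$ into small neighborhoods of $X$.

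\textbf{Step 1: model a neighborhood of $A$ as a cone.} First I replace $\alpha$ by a conjugate action whose behaviour near $A$ is controlled. As in the proof of Proposition~\ref{collection}(2), I take the diagonal action of $\alpha$ with a based-free action $\sigma$ on $Q$ (Theorem~\ref{point}). This gives an action on $M\times Q$ with fixed point set $A\times\{\bar 0\}$. Using \cite[15.1]{chapman:lectures} and the Homeomorphism Extension Theorem (Section~\ref{infdim}), I conjugate back to $M$ so that the fixed point set is again $A$. Since $A$ is an \ANR\ Z-set in the $Q$-manifold $M$, $A$ has a closed neighborhood $W$ with $(W,A)\approx(A\times Q\times[0,1],A\times Q\times\{0\})$ up to the stabilization trick. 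Concretely, I use the mapping-cylinder/collar structure coming from a triangulation of $A^Q$ and Chapman's triangulation of pairs. In these coordinates the levels $K_t=A\times Q\times[0,t]$ play the role they played in Theorem~\ref{Shape}, and I give $M$ a metric that agrees with the $\varrho$ used there on $W$.

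\textbf{Step 2: radial reparametrization.} I pick sequences $c_i=2^{-i}$, $\delta_i$ and $d_i$, partitions $t^i_m$, and a homeomorphism $\phi$ of $[0,1]$ exactly as before. The $\delta_i$ are now chosen by uniform continuity of $f$ on the compacta $P_i=\{$level $\ge c_i\}\cup(M\setminus\operatorname{int}W)$. Let $h$ be the level-reparametrization, extended by the identity off $W$, and set $\beta'=h^{-1}\alpha(\identity\times h)$. This action still has fixed point set $A$, and its orbits through level $t\in[c_{i+1},c_i]$ have $\varrho$-diameter below $\delta_{i+1}$. The claims in the proof of Theorem~\ref{Shape} carry over verbatim inside $W$; outside $W$ the displacement is unaffected because nothing changes there. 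I then define $\beta=f\beta'f^{-1}$ on $M\setminus X$ and extend it by the identity on $X$. The same four-claim estimate shows that orbits of points near $X$ have diameter $<3/i$, so the extension is continuous and is a semifree action with fixed point set $X$. By construction $\beta|_{M\setminus X}$ is equivariantly homeomorphic (via $f$) to $\beta'|_{M\setminus A}$, and $\beta'$ is conjugate to $\alpha$, whose fixed point set $A$ is an \ANR\ Z-set. Hence $\beta$ is quasitame. If $\alpha$ is an $H$-\ANR\ action then so is $\beta'$, being conjugate to it, and so $\beta$ is semitame.

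\textbf{Main obstacle.} The delicate point is Step 1 together with the use of the complement theorem. I need a homeomorphism $f$ of complements that sends the ends near $A$ to the ends near $X$. This must hold not merely as an abstract homeomorphism: I need $f(z)\to X$ as $z\to A$, and uniform continuity on each $P_i$. I would obtain $f$ from Chapman's proof, realized as a limit of homeomorphisms, which gives this "end-control" automatically. I would also have to verify that the collar of $A$ can be arranged compatibly with the action, so that condition (\ref{detweede}) of Theorem~\ref{Shape} holds, namely that $\alpha(H\times K_{d_{i+1}})\subseteq K_{d_i}$. This follows from compactness of $H$ and continuity of $\alpha$ at the fixed set $A$.
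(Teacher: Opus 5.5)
Your overall plan matches the paper's: rerun the slide-and-estimate argument of Theorem~\ref{Shape} near the fixed set, conjugate by a homeomorphism $f\colon M\setminus A\to M\setminus X$, and extend by the identity over $X$. Working in a collar inside $M$ is a harmless variation. The paper instead embeds $M$ in $K=C(A_1)\times Q$ as a closed neighborhood of $A$ and makes both the slide and $f$ the identity off $M$.

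\textbf{The gap.} The step you cite, a ``complement theorem for $Q$-manifolds'', is exactly what the paper has to prove. It is also the only place where $\boldsymbol j\boldsymbol\kappa=\boldsymbol i$ enters. You say the hypothesis is what that version needs, but you never show how it is used.

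The paper reruns Chapman's back-and-forth proof of \cite[25.2]{chapman:lectures}, using a fundamental sequence $\{\lambda_m\}$ for $\boldsymbol\kappa^{-1}$. Its first step needs an ambient homeomorphism $\mu_1$ carrying the Z-set $A$ into a neighborhood $V_{m_1}$ of $X$, in the homotopy class of $\lambda_{m_1}|_A$. In $Q$ the Homeomorphism Extension Theorem gives this at no cost, since any two maps into $Q$ are homotopic. In $M$ you need $\lambda_{m_1}|_A\simeq j$ in $M$. The paper gets this from $\lambda_m|_{U_m}=i\lambda_m|_{U_m}\simeq j\kappa\lambda_m|_{U_m}\simeq j$. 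That is what allows the construction to stay inside $M$: the paper takes $\mu_1$, and hence $f$, to be the identity off $M$. All later steps are supported in the shrinking neighborhoods of $A$.

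Conversely, what you call the main obstacle is not one. Any homeomorphism $M\setminus A\to M\setminus X$ extends to the one-point compactifications $M/A\to M/X$. So $f(z)\to X$ as $z\to A$ automatically, and uniform continuity on the compact sets $P_i$ is automatic. Likewise, $\alpha(H\times K_{d_{i+1}})\subseteq K_{d_i}$ is just a condition on the choice of the $d_i$, not on the collar.

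\textbf{Step~1 also needs repair.} Taking the diagonal with a based-free action is beside the point, since the fixed set is already a Z-set. It produces $A\times\{\bar 0\}$, which in general has no product collar (a point of $Q$ has none). The collar $(A\times Q\times[0,1],A\times Q\times\{0\})$ you write down only makes sense once the fixed set has been replaced by a copy of $A\times Q$.

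Do what the paper does: replace $\alpha$ by $\alpha\times\identity_Q$, and transport it to $M$ by a homeomorphism $M\times Q\to M$ that is homotopic to the projection and equal to it on $A\times\{\bar 0\}$. The new fixed set is a $Q$-manifold Z-set containing $A$ as a deformation retract, so it is collared \cite[Theorem 16.2]{chapman:lectures}. The compatibility hypothesis passes to it, and the new action is an $H$-\ANR\ action if $\alpha$ is.

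On such a collar the product metric already makes Claims 1--4 of Theorem~\ref{Shape} work. The cone metric $\varrho$ is not needed, and it degenerates at level $1$, so it would not be a metric on all of $W$.
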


\begin{proof} We can arrange to appeal to the proof of Theorem \ref{Shape}, which takes
place in the Hilbert cube $K =C(A_1)\times Q$, where $A$ has been assumed without loss of
generality to
be $A_1\times Q$ and a compact group $G$ is acting semifreely on $K$.  In the proof of Theorem \ref{Shape}
a homeomorphism $h$ of $K$ is
constructed from
$\alpha$ and a fixed homeomorphism $f:K\setminus A\to K\setminus X$.
The conjugate $\beta = h^{-1}\alpha \,h$ of $\alpha$ is sufficiently well controlled that its conjugate
$\gamma =f\beta f^{-1}$ extends continuously to a semifree action with fixed point set $X$.

The homeomorphism $h$ is a slide $((x,t),u)\to ((x,\phi(t)),u)$ where
$\phi:\mathbb{I} \to \mathbb{I}$ is specified to send $c_i=2^{-i}$ to $d_{j(i)}$ for a decreasing sequence
$d_j$.
The condition on $d_1$ is only that it be less than $c_2$ and there is no condition on the restriction
of
$\phi$ to $[c_1,1]$ other than that $[c_1,1] \to [d_1,1]$.  By fixing $\epsilon > 0$ sufficiently small, redefining $c_i =
\epsilon \,2^{-i}$
for all $i$, and specifying that $\phi$ be the identity
on $[\epsilon,1]$, we may require that $h$ be the identity off any neighborhood of $A$ in $K$.

This allows us to apply the proof of Theorem \ref{Shape} in the current situation because we can
arrange for $M$ to
be embedded in $K$ as a closed neighborhood of $A$ and for $f$ to be the identity on $K\setminus
M$. If we extend $\alpha$ (discontinuously) to $K$ by the trivial action on $K\setminus M$, then $M$
is invariant under the action of $\gamma$.

To do this, we first note that we may assume that $A$ is a $Q$-manifold, since
there is a homeomorphism $\eta:M\times Q\to M$ homotopic to the projection \cite[Theorem 15.1]{chapman:lectures} and by the Estimated Homeomorphism Extension Theorem $\eta$ may be required to equal the projection on $A\times \{\bar 0\}$.  Then $\alpha$ may be replaced by the action induced from $\alpha \times \identity$ by $\eta$.  Similarly, as
$M\approx M\times \mathbb{I}$ we may write $M = M'\times \mathbb I$, where $M'$ is a copy of $M$ and $M' \times \{0\}$ contains the Z-set $A \cup X$.

Identifying $A$ with its copy $A_1\times Q = (A_1\times\{0\}) \times Q \subset C(A_1)\times Q = K$, we may further assume that $M'\times \{0\}$ is embedded
in $K$ as a Z-set containing $A_1 \times Q$.  Finally, since Z-set $Q$-submanifolds have closed collar
neighborhoods \cite[Theorem 16.2]{chapman:lectures}, we may assume that $M$ is embedded in
$K$ as a neighborhood of $M'\times \{0\}$ with boundary $M'\times \{1\}$.

It remains to construct the homeomorphism $f:K\setminus A \approx K\setminus X$ that is the
identity off $M$.
As $A$ is an \ANR, $\boldsymbol\kappa$ is represented by a map $\kappa:X\to A$.
Since $A$ is an \ANR, $\kappa$ extends to a neighborhood $V_1 \subseteq M'\times [0,1)$ of $X$.  Using
$K\approx Q$, let $\{\lambda_m:K\to K\}_m$ be a fundamental
sequence representing
$\boldsymbol \lambda = \boldsymbol \kappa^{-1}$.  By inductively taking subsequences, we may
assume that there are neighborhoods $M'\times [0,1) \supseteq U_1\supseteq
U_2\supseteq \cdots$ of $A$ and $M'\times [0,1)\supseteq V_1\supseteq V_2\supseteq \cdots$  of $X$ with $
\bigcap \,U_m = A$ and $\bigcap\,V_m = X$, such that

\begin{enumerate}

\item $\lambda_m(U_m)\subseteq V_m$,

\item $\lambda_m|_{U_{m+k}}\simeq \lambda_{m+k}|_{U_{m+k}}$ in $V_m$ for all $k>0$,

\item $\lambda_m(\kappa|_{V_m})\simeq i|_{V_m}$ in $V_{m-1}$ for $m>1$, and

\item $\kappa (\lambda_m|_{U_m})\simeq i|_{U_m}$ in $U_{m-1}$.

\end{enumerate}

Since the inclusion $i$ of $X$ into the \ANR\ $M$ satisfies $i\simeq j\kappa$, $\lambda_m|_{U_m} =
i(\lambda_m|_{U_m})\simeq j\kappa (\lambda_m|_{U_m})\simeq j$  for large $m$.  (We are
here extending $i$ and $j$ to denote the inclusions of $V_1$ and $U_1$ into $M$.)

An examination of the argument for Theorem 25.2 in \cite{chapman:lectures} shows that $f$ may be
constructed
from the $\lambda_m$'s and $\kappa$
by using the
Homeomorphism Extension Theorem (see Section \ref{infdim}) inductively in a ``back-and-forth" argument to
produce
homeomorphisms $\mu_j$ of $K$ and  subsequences $U_{m_j}$ and $V_{m_j}$ of the
neighborhoods of $A$
and $X$  satisfying

\begin{itemize}

\item $\mu_j(U_{m_j})\subseteq V_{m_j}$,

\item $\mu_j|_{U_{m_j}} \simeq \lambda_{m_j}|_{U_{m_j}}$ in $V_{m_{j-1}}$, and

\item for $j > 1, \mu_j|_{K\setminus U_{m_{j-1} }}= \mu_{j-1}|_{K\setminus U_{m_{j-1}}}$.

\end{itemize}
Now $f = \lim_{n\to \infty} \mu_n\mu_{n-1}\cdots \mu_1$.
Therefore, if $\mu_1|_{K\setminus M} = \identity$, then $f |_{K\setminus M}$ is the identity.  In \cite{chapman:lectures} the only
requirement for $\mu_1$ is that $\mu_1(A)$ be contained in one of the $V's$, which we have
identified as $V_{m_1}$ above.
Because the $U$'s, $V$'s, and homotopies are in $M$ and $\lambda_m|_A \simeq j$, we
see that on the Z-set
$Y = A \cup (M'\times \{1\})$ of $M'\times \mathbb{I}$, the embedding
$\nu:Y\to M'\times \mathbb{I} $
that is $\mu_1$ on $A$ and inclusion on $M'\times \{1\}$ is homotopic to the identity and thus
extends to a homeomorphism of $M $ that is the identity on $M'\times \{1\}$ and then to a
homeomorphism  $\mu_1'$ of
$K$ that is the identity on the complement of $M$. If we replace $\mu_1$ by $\mu_1'$ in the
construction of $f$ then $f$ is the identity on $K\setminus M$.
\end{proof}

\begin{corollary} Let $A\subseteq M$ be an \ANR, $H$ be a compact
group and
$\alpha$ be a semifree action of $H$ on $M$ with fixed point set $A$.   If $X$ is a compact space
with $\Sh(X) = \Sh(A)$ then $X$ embeds in $M$ as the fixed point set  of a semifree action of $H$ on $M$.
\end{corollary}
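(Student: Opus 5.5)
The plan is to reduce to Theorem~\ref{anrShape}. That theorem needs three things: a Z-set copy of $X$ in $M$, a Shape equivalence $\boldsymbol\kappa$ to the fixed point set $A$, and the compatibility condition $\boldsymbol j\boldsymbol\kappa=\boldsymbol i$. The idea is to build the embedding of $X$ so that this compatibility holds by construction.

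First, arrange that $A$ is a Z-set in $M$ without changing the fixed point set up to homeomorphism. For a nontrivial action, the diagonal action of $\alpha$ with a based-free action $\sigma$ of $H$ on $Q$ (Theorem~\ref{point}) has fixed point set $A\times\{\bar 0\}$, a Z-set in $M\times Q$. Composing with a homeomorphism $M\times Q\approx M$ homotopic to the projection (\cite[Theorem 15.1]{chapman:lectures}) transports this to a semifree action on $M$ whose fixed point set is a Z-set homeomorphic to $A$. The hypotheses of Theorem~\ref{anrShape} require $A$ to be an \ANR; I read the corollary in that setting, so $A$ is assumed to be an \ANR. For a trivial action, $A=M$; I would treat it separately or exclude it as degenerate.

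Next, fix a Shape equivalence $\boldsymbol\lambda:X\to A$. Since $A$ is an \ANR, the discussion of fundamental sequences above shows $\boldsymbol\lambda$ is represented by a map $\kappa:X\to A$. Let $j:A\to M$ be the inclusion. Embed $X$ in $M$ as a Z-set by a map $i$ homotopic to $j\kappa$:
\begin{enumerate}
\item $M$ is a $Q$-manifold, so Z-embeddings are dense in $C(X,M)$ (\cite{chapman:lectures}).
\item The \ANR\ property of $M$ makes sufficiently close maps homotopic.
\item Hence some Z-embedding $i$ approximating $j\kappa$ satisfies $i\simeq j\kappa$.
\end{enumerate}
Identify $X$ with $i(X)$. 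Because $M$ is an \ANR, homotopic maps into $M$ generate the same Shape morphism, so $\boldsymbol i=\boldsymbol j\boldsymbol\kappa$ with $\boldsymbol\kappa=\boldsymbol\lambda$ a Shape equivalence.

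Theorem~\ref{anrShape} now gives a (quasitame) semifree action $\beta$ of $H$ on $M$ with fixed point set $i(X)\approx X$. The main obstacle is making the compatibility condition hold; here it is absorbed by choosing the embedding in the homotopy class of $j\kappa$, which relies on the density of Z-embeddings in $Q$-manifolds. The remaining bookkeeping is the reduction in the first step making $A$ a Z-set.
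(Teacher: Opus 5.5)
Your argument is correct, but it secures the compatibility hypothesis $\boldsymbol j\boldsymbol\kappa=\boldsymbol i$ of Theorem~\ref{anrShape} differently from the paper.

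The paper first replaces $A$ by a Z-set copy of $A^Q$ (via Proposition~\ref{collection}). This copy is a $Q$-manifold by Edwards' Theorem. The paper then realizes the Shape equivalence by a Z-embedding $\kappa:X\to A^Q$. So $X$ sits inside the fixed point set, and the inclusion $X\to M$ literally equals $j\kappa$; compatibility holds exactly.

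You instead keep $A$ itself and only move it to a Z-set, building that copy directly from the diagonal action with a based-free $\sigma$. You then approximate $j\kappa$ by a Z-embedding into $M$, which is already a $Q$-manifold. The \ANR\ property of $M$ keeps that embedding in the homotopy class of $j\kappa$. Your compatibility therefore holds only up to homotopy, but that is all a Shape morphism detects. Note that your $i(X)$ may meet $A$; this is harmless, since the paper's own proof puts $X$ inside $A^Q$.

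What your route buys:
\begin{enumerate}
\item It avoids stabilizing $A$.
\item Because you construct the Z-set copy of $A$ by hand, it sidesteps a mismatch in the paper's citation: Proposition~\ref{collection}(2) is stated for $H$-\ANR\ actions, while the corollary assumes none. That proof does go through without the hypothesis, but the paper does not say so.
\end{enumerate}

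What the paper's route buys is the nested configuration $X\subseteq A_1\subseteq M$. This is exactly the setup reused in Theorem~\ref{shapeobstruction}.

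In both versions the degenerate trivial action ($A=M$) must be tacitly excluded, as you point out.
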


\begin{proof} By Proposition \ref{collection}(3), we may replace $A$ by $A^Q$ and assume that it is a Z-set in
$M$.  We may then realize a Shape isomorphism by an embedding $\kappa:X\to A^Q$ and apply Theorem
\ref{anrShape}.
\end{proof}

The next theorem is an application of Theorem \ref{QM:anrhomotopy} in which the
subset $X$ of
$M$ is no longer required to
be an \ANR\  and we need to use \v Cech-style homology  to
formulate the Smith
Theory condition in \cite{cwy}.  We use reduced homology.  Thus,

\begin{definition} For a compact
set $X$ in a $Q$-manifold $M$, set
$\bar H_i(X;\mathbb{F}_p\pi)=\varprojlim_{\mathcal U}\tilde H_{i}(U;\mathbb{F}_p\pi)$, where $\mathcal U$ denotes the set of open
neighborhoods $U$ of $X$ in $M$ partially ordered by the inclusions $i^{UV}:U\to V$.
\end {definition}

\begin{remark} It is easily seen, using the
Triangulation Theorem and the compact Q-manifold neighborhoods of $X$ in $M$ and by embedding $M$ in $Q$, that
\begin{enumerate}

\item if $X$ is an \ANR\, then $\bar H_{\ast}(X;\mathbb{F}_p\pi)$ is isomorphic
to $\tilde H_{\ast}(X;\mathbb F_p\pi)$,

\item $\bar H_i(X;\mathbb{F}_p\pi)$ is isomorphic to the result of applying a \v
Cech construction using nerves of covers,

\item $\bar H_i(X;\mathbb{F}_p\pi)$ is functorial on the Shape category (cf.\ \cite{dydseg}).
\end{enumerate}

\end{remark}

The following is one result that is presented as two Theorems for clarity.

\begin{theorem}\label{shapeobstruction} Let $G$ be a finite group and $X$ be a Z-set in
$M$
such that the inclusion $f:X\to M$
induces isomorphisms from $\bar H_i( X;\mathbb F_p\pi)$ to
$\tilde H_i( M ;\mathbb F_p\pi)$ for all $i$ and for each prime $p$ dividing the order of $G$.
If $X$ has the Shape of a compact \ANR\
then $f$ determines an element
 $\omega(f) \in \tilde K^{top}_0 (\mathbb Z[\pi] \subset \mathbb Z[\pi \times G])$, where $\pi$ is the
 fundamental group of $M$.  If
 $\boldsymbol{\kappa} :X\to A$ is a Shape equivalence between $X$ and a compact \ANR\  $A$,
 then there is
 an embedding $\eta$ of $A^Q$ in $M$ as a Z-set $A_1$ containing $X$ such that $\omega(f) =
 \omega(j)$, where $j:A_1 \to M$ is
 the inclusion and $\omega(j)$ is the obstruction of Theorem \ref{QM:anrhomotopy}.
 \end{theorem}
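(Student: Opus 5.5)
The plan is to define $\omega(f)$ by replacing $X$ with an \ANR\ Z-set $A_1\supseteq X$ of the same Shape, and then to show that the class obtained does not depend on the choices made.

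\emph{Construction of $A_1$.} Given a Shape equivalence $\boldsymbol\kappa:X\to A$, with $A$ a compact \ANR, I would work with $A^Q$, which is a $Q$-manifold by Edwards' Theorem. Since $A^Q$ is an \ANR, $\boldsymbol\kappa$ is represented by a map $\kappa:X\to A^Q$, which I take to be an embedding into $A\times\{\bar 0\}\times Q$. I then want to embed $A^Q$ in $M$ as a Z-set $A_1$ so that $\eta\kappa$ is the inclusion of $X$ and $\eta$ realizes the Shape class of $\boldsymbol i\boldsymbol\kappa^{-1}$.

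To build $\eta$, choose a small compact $Q$-manifold neighborhood $W$ of $X$ in $M$; this is possible by the Triangulation Theorem and closed collars. A fundamental sequence for $\boldsymbol\kappa^{-1}$ gives a map $A^Q\to W$ whose composition with $\kappa$ is homotopic in $W$ to the inclusion of $X$. Approximate this map by a Z-embedding, which is possible because maps into $Q$-manifolds approximate by Z-embeddings when the domain is a $Q$-manifold of infinite codimension (after replacing $A^Q$ by $A^Q\times\{\bar 0\}\subset A^Q\times Q$). Finally, use the Homeomorphism Extension Theorem on the Z-set $X$ to arrange $\eta|_X=\kappa^{-1}$ exactly.

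\emph{Hypotheses and definition.} Let $j:A_1\to M$ be the inclusion. Because $\bar H_\ast$ is functorial on the Shape category (Remark (3)) and agrees with $\tilde H_\ast$ on \ANR's (Remark (1)), the map $j_\ast$ on $\tilde H_\ast(A_1;\mathbb F_p\pi)$ is identified with $f_\ast$ on $\bar H_\ast(X;\mathbb F_p\pi)$ via the Shape isomorphism $X\to A_1$. So condition (1) of Theorem~\ref{QM:anrhomotopy} holds for $j$, and $\omega(j)=[C^{cone}_\ast(\tilde j)]\in\tilde K_0^{top}(\mathbb Z[\pi]\subset\mathbb Z[\pi\times G])$ is defined. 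I set $\omega(f)=\omega(j)$. With this definition the displayed equality holds by construction, and the real content is well-definedness.

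\emph{Independence of choices.} I expect this to be the main obstacle. Suppose $(A,\boldsymbol\kappa,\eta)$ and $(A',\boldsymbol\kappa',\eta')$ give $A_1$ and $A_1'$. Then $\boldsymbol\kappa'\boldsymbol\kappa^{-1}$ is a Shape equivalence between \ANR's, hence a homotopy equivalence $\theta:A_1\to A_1'$. Compatibility with the inclusions gives $j'\theta\simeq j$, because both are homotopic to the Shape-extension of $i$ and, $M$ being an \ANR, Shape morphisms into $M$ are homotopy classes.

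From here I would proceed as in Theorem~\ref{QM:cellhomotopy}. Triangulate, pass to cellular approximations, and apply Lemma~\ref{cylinderlemma}(2) to the square $j'\theta\simeq \identity_M\circ j$; this gives a $\mathbb Z[\pi]$-chain homotopy equivalence $C^{cone}_\ast(\tilde j)\simeq C^{cone}_\ast(\tilde j')$.

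The delicate point is that $\tilde K_0^{top}$ is sensitive to torsion, unlike plain $\tilde K_0$. I would argue as in the proof of Theorem~\ref{QM:simple} using the exact sequence of \cite{av}. The difference $\omega(j')-\omega(j)$ is the class of the mapping cone of the induced chain equivalence, which lies in the image of $\tau(\theta)$ under $Wh\to\tilde K^{top}_0$.

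I would then need that this image vanishes, or else that \cite{cwy} defines $\tilde K_0^{top}$ precisely as a quotient killing such torsions coming from $\pi_1$ of the fixed set. If neither holds, I would instead fix the choice canonically by taking $A$ to be a compact \ANR\ neighborhood retract constructed from $X$ itself. In that case any two choices differ by a homotopy equivalence through $M$, and the invariance argument of Theorem~\ref{QM:anrhomotopy} from \cite[Theorem 4]{cwy} applies.
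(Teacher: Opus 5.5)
Your construction of $A_1$ and your transfer of the homology hypothesis from $f$ to $j$ match the paper. Your recipe for $\eta$ is actually more explicit than the paper's ``extend $\zeta^{-1}$ to an embedding'': you build a map $A^Q\to M$ from a fundamental sequence for $\boldsymbol\kappa^{-1}$, approximate it by a Z-embedding $\eta_0$, and then correct it with the Homeomorphism Extension Theorem applied to the homotopic Z-embeddings $\eta_0\kappa$ and $i$ of $X$.

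The gap is independence of choices. You rightly call this the real content, but you do not prove it. You correctly obtain $j'\theta\simeq j$ and, via Lemma~\ref{cylinderlemma}(2), a $\mathbb Z[\pi]$-chain homotopy equivalence $C^{cone}_\ast(\tilde j)\simeq C^{cone}_\ast(\tilde j')$. You then stop, because you believe $\tilde K_0^{top}(\mathbb Z[\pi]\subset\mathbb Z[\pi\times G])$ detects Whitehead torsion, and the proposal ends in conditionals (``I would then need \dots\ If neither holds \dots''). The fallback does not rescue it. An \ANR\ neighborhood of $X$ need not have the Shape of $X$, and even a suitable one is not canonical, so two such choices would again have to be compared.

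The missing observation is that torsion plays no role here; it is $Wh^T(\pi\subset\pi\times G)$ in Theorem~\ref{QM:simple} that sees torsion. Since $A_1$ is a $Q$-manifold, $\omega(j)=\phi(o(j))$. Here $o(j)\in\tilde K_0(\mathbb Z[\pi\times G])$ is the class of Theorem~\ref{QM:cellhomotopy}, and $\phi$ is the map in the sequence $H_0(\pi;\boldsymbol{\tilde K}(\mathbb Z[G]))\to\tilde K_0(\mathbb Z[\pi\times G])\overset{\phi}{\to}\tilde K_0^{top}(\mathbb Z[\pi]\subset\mathbb Z[\pi\times G])$ used in the paper. As noted in the proof of Theorem~\ref{QM:cellhomotopy}, $o(j)$ depends only on the $\mathbb Z[\pi\times G]$-chain homotopy type of $C^{cone}_\ast(\tilde j)$, with $G$ acting trivially. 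So the chain equivalence you already have gives $o(j)=o(j')$, and hence $\omega(j)=\omega(j')$.

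Even in your own framing, a torsion term could only enter through the composite $Wh(\pi)\overset{\beta}{\to}Wh^T(\pi\subset\pi\times G)\overset{\alpha}{\to}\tilde K_0(\mathbb Z[\pi\times G])$ of the Assadi--Vogel sequence, which is zero by exactness.

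With this step supplied, your argument is complete and somewhat more self-contained than the paper's. The paper instead invokes the proof of \cite[Theorem 4]{cwy} to assert that different choices of $A$, $\boldsymbol\kappa$, $\eta$ move $o(j)$ only within $\ker\phi$.
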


\begin{proof}  Let $\boldsymbol \kappa:X\to A$ be a Shape equivalence
between $X$ and a compact
\ANR\  $A$ and let
$\zeta:X\to A$ be a map inducing
$\boldsymbol \kappa$.  Using the inclusion $A\to A \times \{\bar 0\}\to A^Q$, we
may replace $A$ by the homotopy equivalent $Q$-manifold $A^Q$ and assume
that $\zeta$ is an embedding of $X$ in $ A^Q $ as a Z-set.  Extend
$\zeta^{-1}$ to an embedding $\eta$ of $A^Q$ in $M$ as a Z-set $A_1$ and let
$\kappa$ denote the inclusion $X\to A_1$, which induces a Shape equivalence.

As $\bar H_i(X;\mathbb{F}_p\pi)$ is a Shape invariant,
$\kappa_{\ast}:\bar H_i(X;\mathbb{F}_p\pi)
\cong \bar H_i(A_1;\mathbb{F}_p\pi)\cong \linebreak
\tilde H_i(A_1;\mathbb{F}_p\pi)$.  By functoriality,
$j_{\ast}:\tilde H_i(A_1;\mathbb{F}_p\pi)\cong \tilde H_i(M;\mathbb{F}_p\pi)$ for
all $i$ and all primes
$p$ dividing the order of $G$.  Let $\omega(f) = \omega(j)\in
\tilde K_0^{top}(\mathbb Z[\pi]\subset\mathbb Z[\pi \times G])$ be the
Cappell-Weinberger-Yan obstruction identified in Theorem
\ref{QM:anrhomotopy}.  Then $\omega(j)$ is the image of $o(j)$ under $\phi$ in the following exact sequence
$$
    H_0(\pi;\boldsymbol{\tilde K}(\mathbb{Z}[G])) \to \tilde K_0(\mathbb Z[\pi \times G])\overset{\phi}\to
    \tilde K_0^{top}(\mathbb{Z}[\pi]\subset \mathbb{Z}[\pi\times G])
$$
in the proof of \cite[Theorem 4]{cwy}.

The  \ANR\ $A$ is determined only up to homotopy type by $\boldsymbol {\kappa}$, and $\eta$ is
restricted only by the
requirement that the inclusion $j\kappa $ be homotopic to $f$.  The proof of \cite[Theorem 4]{cwy}
explains why different
choices of $A, \boldsymbol {\kappa}$, and
$\eta$ vary $o(j)$ by elements of the kernel of $\phi$, showing that
$\omega(f)$ is uniquely determined by $o(j)$.
\end{proof}

For the following results, we use the notation of Theorem \ref{shapeobstruction}.

\begin{theorem}\label{semitame obstruction}  Let $G$ be a finite group and $X$ be a Z-set in
$M$ with the Shape of a compact \ANR\ $A$
such that the inclusion $f:X\to M$
induces isomorphisms from $\bar H_i( X;\mathbb{F}_p\pi)$ to
$\tilde H_i( M ;\mathbb{F}_p\pi)$ for all $i$ and for each prime $p$ dividing the order of $G$.
Then $\omega(f) = 0$ if and only if
there is a compact $Q$-manifold $N$
containing $A_1$ as the fixed point set of an \ANR\ action $\alpha$ of G on $N$ and an invariant
homotopy equivalence
$\bar f:N\to M$ that is the identity on $A_1$.  If the Whitehead torsion $\tau(\bar f)\in
Wh(\pi)$ lies in $f_{\ast}(Wh(\check \pi_1(X,x_0)))$,  then $X$ is the fixed point set of a semitame
semifree action of G on $M$.
\end{theorem}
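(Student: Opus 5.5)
The plan is to reduce everything to Theorem \ref{QM:anrhomotopy} applied to the \ANR\ Z-set $A_1=\eta(A^Q)$ produced in Theorem \ref{shapeobstruction}, and then to pass from $A_1$ back to $X$ by Theorem \ref{anrShape}.

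\emph{First statement.} By Theorem \ref{shapeobstruction}, $\omega(f)=\omega(j)$, where $j:A_1\to M$ is the inclusion. Condition (1) of Theorem \ref{QM:anrhomotopy} for $j$ holds: as in the proof of Theorem \ref{shapeobstruction}, the Shape equivalence $\kappa:X\to A_1$ gives $\bar H_i(X;\mathbb F_p\pi)\cong \tilde H_i(A_1;\mathbb F_p\pi)$, and this is compatible with $f_\ast$ and $j_\ast$ because $j\kappa=f$. Hence Theorem \ref{QM:anrhomotopy} says that $\omega(j)=0$ if and only if there are a compact $Q$-manifold $N\supseteq A_1$, a semifree $G$-\ANR\ action $\alpha$ on $N$ with fixed point set $A_1$, and an invariant homotopy equivalence $\bar f:N\to M$ extending $j$, that is, equal to the identity on $A_1$. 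This is exactly the claimed equivalence.

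\emph{Second statement.} Assume $\tau(\bar f)\in f_\ast(Wh(\check\pi_1(X,x_0)))$. Since $\kappa$ is a Shape equivalence and $A_1$ is an \ANR, $\check\pi_1(X,x_0)\cong\pi_1(A_1,x_0)$ via $\kappa_\ast$. Also $f_\ast=j_\ast\kappa_\ast$ on $\check\pi_1$, because $f$ factors through $A_1$ and the neighborhood systems are cofinal. So $\tau(\bar f)\in j_\ast(Wh(\pi_1(A_1)))$. The last clause of Theorem \ref{QM:anrhomotopy} then gives an \ANR\ Z-set $C\subseteq M$ containing $A_1$ as a deformation retract, together with a semifree $G$-\ANR\ action $\beta$ on $M$ with $\Fix(\beta)=C$. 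By Remark \ref{wh}, since $A_1$ is a $Q$-manifold we may alternatively take $C$ to be a deformation retract of $A_1$. Either way, $C$ is homotopy equivalent to $A_1$ by a map compatible with the inclusions into $M$.

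Next apply Theorem \ref{anrShape} to $\beta$, with $C$ in place of $A$ and the given $X$. We need a Shape equivalence $\boldsymbol\kappa':X\to C$ with $\boldsymbol j_C\boldsymbol\kappa'=\boldsymbol i$. Take $\boldsymbol\kappa'$ to be the composite of $\boldsymbol\kappa:X\to A_1$ with the homotopy equivalence $A_1\to C$ (the inclusion, or the retraction in the alternative). The required identity holds up to homotopy in $M$ because the deformation takes place in $M$. Theorem \ref{anrShape} then yields a semifree action on $M$ with fixed point set $X$, and it is semitame because $\beta$ is a $G$-\ANR\ action.

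The main obstacle is the torsion bookkeeping. One must check that the torsion condition stated with $\check\pi_1(X,x_0)$ transfers exactly to the subgroup $j_\ast(Wh(\pi_1(A_1)))$. This uses that the Shape isomorphism $\kappa$ identifies the pro-groups, and so the inverse limits, with basepoints chosen consistently. One must also confirm that $\tau(\bar f)$ is unchanged under the identifications made in Theorem \ref{shapeobstruction}, in particular the choice of $\eta$.
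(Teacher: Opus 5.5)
Your proposal is correct and follows the paper's proof essentially step for step. You get $\omega(f)=\omega(j)$ from Theorem \ref{shapeobstruction} and combine it with Theorem \ref{QM:anrhomotopy} for the equivalence. Then $f=j\kappa$ pushes the torsion into $j_\ast(Wh(\pi_1(A_1)))$, the last clause of Theorem \ref{QM:anrhomotopy} produces the \ANR\ fixed set $C\supseteq A_1$, and Theorem \ref{anrShape} is applied to the shape equivalence $X\hookrightarrow C$.

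The one difference is that you assert $\check\pi_1(X,x_0)\cong\pi_1(A_1,x_0)$, which raises pointed-versus-unpointed shape issues and is more than you need. The inclusion $f_\ast(Wh(\check\pi_1(X,x_0)))\subseteq j_\ast(Wh(\pi_1(A_1)))$, obtained from $f=j\kappa$ by functoriality, is what the paper uses and suffices.
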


\begin{proof} The ``if and only if'' part of the Theorem follows from Theorem \ref{QM:anrhomotopy} since Theorem \ref{semitame obstruction} shows that $\omega(f)=\omega (j)$ is uniquely determined in
$\tilde K_0^{top}(\mathbb Z[\pi] \subset\mathbb Z[\pi \times G])$.
Since $f = j\kappa$, $f_{\ast}(\text{Wh} (\check \pi_1(X,x_0)))
\subset j_{\ast}(\text{Wh}(\pi_1(A,x_0)))$.  Hence if
$\tau (\bar f) \in \linebreak f_{\ast}(\text{Wh}(\check \pi_1(X,x_0))$, Theorem \ref{QM:anrhomotopy}
provides an \ANR\ Z-set $C$ of $M$ containing $A_1$ as a deformation retract and the
fixed point set of a semifree $G$-action on $M$.  Now the inclusion of $X$ into $C$ is a shape
equivalence, so Theorem  \ref{anrShape} provides a semitame semifree
action of $G$ on $M$ with fixed point set $X$.
\end{proof}

\begin{corollary}\label{semitame:notorsion}
Suppose that $\pi = 0$, $\mathbb{Z}^n$ or $
\mathbb{Z}_m$ for $m = 2, 3, 4, 6$ or a braid group and that $X$ is a Z-set in $M$ that is
Shape equivalent to a compact \ANR\ $A$ and the inclusion $f:X\to M$ induces isomorphisms from $\bar H_i(  X;\mathbb{F}_p\pi)$ to
$\tilde H_i( M ;\mathbb{F}_p\pi)$ for all $i$ and for each prime $p$ dividing the
order of $G$.  If $\omega(f) = 0$, then $X$ is the fixed point set of a semitame semifree
action of $G$ on $M$.
\end{corollary}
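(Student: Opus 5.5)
The corollary follows by combining Theorem \ref{semitame obstruction} with Remark \ref{wh}(2); no new construction is needed.

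First I check that the hypotheses of Theorem \ref{semitame obstruction} hold. The space $X$ is a Z-set in $M$ with the Shape of the compact \ANR\ $A$. The inclusion $f$ induces isomorphisms $\bar H_i(X;\mathbb{F}_p\pi)\to\tilde H_i(M;\mathbb{F}_p\pi)$ for every $i$ and every prime $p$ dividing $|G|$. So Theorem \ref{shapeobstruction} defines $\omega(f)$. Since $\omega(f)=0$, the ``if'' direction of Theorem \ref{semitame obstruction} gives three things:
\begin{enumerate}
\item a compact $Q$-manifold $N$ containing the Z-set copy $A_1$ of $A^Q$;
\item a semifree \ANR\ action $\alpha$ of $G$ on $N$ with fixed point set $A_1$;
\item an invariant homotopy equivalence $\bar f:N\to M$ that is the identity on $A_1$.
\end{enumerate}

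Next I verify the torsion condition $\tau(\bar f)\in f_{\ast}(Wh(\check\pi_1(X,x_0)))$. Under the stated hypothesis on $\pi$, Remark \ref{wh}(2) gives $Wh(\pi)=0$, citing \cite{bhs}, \cite{milnor:wht} and \cite{braids}. Hence $\tau(\bar f)=0$. Since $0$ lies in the image of any homomorphism, the condition holds trivially.

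Finally, the last sentence of Theorem \ref{semitame obstruction} applies and shows that $X$ is the fixed point set of a semitame semifree action of $G$ on $M$. Equivalently, one can bypass the image condition: with $\tau(\bar f)=0$, the first clause of Theorem \ref{QM:anrhomotopy} realizes $A_1$ as the fixed point set of a semifree $G$-\ANR\ action on $M$. The inclusion $X\to A_1$ is a Shape equivalence compatible with the inclusions into $M$, so Theorem \ref{anrShape} then gives the semitame action with fixed point set $X$.

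The only point needing care is that $\pi$ here is the fundamental group of $M$, which is exactly the group whose Whitehead group Remark \ref{wh}(2) addresses. Beyond that there is no real obstacle.
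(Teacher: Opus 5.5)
Your proposal is correct and matches the paper's argument. The paper's proof is the single observation that Theorem \ref{semitame obstruction} applies because $Wh(\pi)=0$ (Remark \ref{wh}(2)), so $\tau(\bar f)=0$ trivially lies in $f_{\ast}(Wh(\check\pi_1(X,x_0)))$; your alternative route through Theorems \ref{QM:anrhomotopy} and \ref{anrShape} is just the proof of Theorem \ref{semitame obstruction} unpacked in the case $\tau(\bar f)=0$.
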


\begin{proof}
Theorem \ref{semitame obstruction} applies, since $Wh(\pi) = 0$.
\end{proof}

\begin{corollary}\label{fd1-UV} Let $X$ be a finite-dimensional, connected, 1-UV Z-set in the
simply connected
 compact $Q$-manifold
 $M$.  If the inclusion $f:X\to M$ induces isomorphisms $\check H^{\ast}(M;\mathbb Z)\to
 \check H^{\ast}(X;\mathbb Z)$ on
 \v Cech cohomology with $\mathbb Z$ coefficients, then $X$ is the fixed point set of  periodic
 homeomorphisms of period $2, 3, 5, 7, 11, 13, 17$, and $19$.
\end{corollary}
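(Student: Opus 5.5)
The plan is to reduce to Corollary~\ref{semitame:notorsion} with $\pi = 0$, after showing that $X$ has the Shape of a compact \ANR\ and that the homological and $K$-theoretic hypotheses hold for $G = \mathbb{Z}_q$, $q \in \{2,3,5,7,11,13,17,19\}$.

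\emph{Step 1: $X$ has the Shape of a finite cell complex.} The inclusion $f$ induces isomorphisms $\check H^{\ast}(M;\mathbb Z)\to \check H^{\ast}(X;\mathbb Z)$. Since $M$ is a compact $Q$-manifold, it has the homotopy type of a finite complex by Chapman's triangulation theorem, so $H^{\ast}(M;\mathbb Z)$ is finitely generated. Hence $\check H^{\ast}(X;\mathbb Z)$ is finitely generated. As $X$ is finite dimensional, connected and 1-UV, the Geoghegan--Lacher theorem \cite{geolach} shows that $X$ is Shape equivalent to a finite cell complex $A$, in particular to a compact \ANR.

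\emph{Step 2: the Smith-theory hypothesis.} Here $\pi = 0$, so $\bar H_i(X;\mathbb F_p\pi)$ is the \v Cech homology $\check{\tilde H}_i(X;\mathbb F_p)$, which is Shape invariant. It therefore equals $\tilde H_i(A;\mathbb F_p)$, with $A$ a finite complex. I would pass from the integral cohomology isomorphism to mod $p$ homology in two moves. First, since $A$ and $M$ are finite complexes of finite type, an isomorphism on integral cohomology induced by a map (the Shape morphism composed with the inclusion, represented by a map $A\to M$ because $M$ is an \ANR) gives an isomorphism on integral homology. This follows from the universal coefficient theorem applied to the mapping cone, whose integral cohomology vanishes; with finitely generated homology this forces the integral homology of the cone to vanish. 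Second, by universal coefficients again, the mod $p$ homology of the cone vanishes. Thus $f_\ast\colon \bar H_i(X;\mathbb F_p)\to \tilde H_i(M;\mathbb F_p)$ is an isomorphism for all $i$ and every prime $p$, in particular for $p=q$. I expect this bookkeeping, which transfers \v Cech data through the Shape equivalence to a genuine map of finite complexes, to be the main obstacle.

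\emph{Step 3: vanishing of the obstruction.} By Theorem~\ref{shapeobstruction}, $\omega(f)=\omega(j)$ is the image of $o(j)\in\tilde K_0(\mathbb Z[G])$, with $\pi$ trivial. For $q\le 19$ prime, $\tilde K_0(\mathbb Z[\mathbb Z_q])=0$ \cite{milnor:algK}, as in Corollary~\ref{QM:1conn}, so $\omega(f)=0$. Since $Wh(\{1\})=0$, Corollary~\ref{semitame:notorsion} applies and gives a semitame semifree $\mathbb Z_q$-action on $M$ with fixed point set $X$. The generator of this action is a periodic homeomorphism of period $q$ with fixed point set $X$, for each listed $q$.
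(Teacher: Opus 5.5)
Your proof is correct, but it takes a different route from the paper after the first step.

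\textbf{Common start.} Both you and the paper begin with Geoghegan--Lacher to get a finite complex $A$ Shape equivalent to $X$.

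\textbf{The paper's route.} The paper works with the Z-set $A_1=\eta(A^Q)\supseteq X$ from the proof of Theorem~\ref{shapeobstruction}. It shows that the inclusion $j\colon A_1\to M$ induces isomorphisms on mod $p$ homology, via the universal coefficient theorems and the 5-Lemma. It then applies Corollary~\ref{QM:1conn}, whose obstruction lies in $Wh^T(1\subset G)\cong\tilde K_0(\mathbb Z[G])=0$. This gives a \emph{cellular} semifree $\mathbb Z_q$-action on $M$ with fixed point set $A_1$. Finally it transfers this action to one with fixed point set $X$ by Theorem~\ref{anrShape}, using that $X\to A_1$ is a Shape equivalence compatible with the inclusions into $M$.

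\textbf{Your route.} You instead feed everything into Corollary~\ref{semitame:notorsion}. There you kill the \ANR-category obstruction $\omega(f)\in\tilde K_0^{top}(\mathbb Z\subset\mathbb Z[G])$ as the image of $o(j)\in\tilde K_0(\mathbb Z[G])=0$, and you use $Wh(1)=0$ for the torsion condition. That chain, through Theorems~\ref{QM:anrhomotopy} and~\ref{fixedZ-set}, ends with the same appeal to Theorem~\ref{anrShape}.

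\textbf{Comparison.} Your argument is a uniform application of the general Section~7 machinery. However, it is longer, and the $\tilde K_0^{top}$ and Whitehead-torsion bookkeeping it carries is vacuous when $\pi=0$. The paper stays in the cellular category. It also records the extra fact that the complement of $X$ carries, up to equivariant homeomorphism, the restriction of a stabilized cellular action.

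\textbf{Step 2.} Your mapping-cone argument is a clean, explicit way to carry out the homological step: a cohomology isomorphism gives an acyclic cone, finite generation then gives an integrally acyclic cone, and hence an isomorphism on mod $p$ homology. The paper only sketches this step. Note that it is the \emph{reduced} cohomology of the cone that vanishes.
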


\begin{proof} We use the notation of the proof of Theorem \ref{shapeobstruction}.
\v Cech and singular cohomology agree on \ANR's, and the singular cohomology groups of $M$ are
finitely generated and vanish above some dimension, so the groups
$\check H^{i}(X;\mathbb Z)$ have the same property.
By \cite{geolach}, $X$ is Shape equivalent to a finite cell complex $A$. Thus, for each $i$, $j^{\ast}: \tilde H^i
(M;\mathbb{Z})\to\tilde H^i( A_1;\mathbb{Z})$ is an isomorphism,
where $A_1 = \eta(A^Q)$ and $j:A_1\to M$ is the inclusion.  By the functoriality
of  the Universal Coefficient Theorems \cite[Theorems 5.5.10, 5.5.12]{spanier} and the 5-Lemma, for each $i$, $j_{\ast}:
\tilde H_i(A_1;\mathbb{F}_p)\to
\tilde H_i( M;\mathbb{F}_p)$ is an isomorphism for all primes $p$.

Now Corollary
\ref{QM:1conn} provides a $ \mathbb{Z}_p$ action on
$M$ with fixed point set $A_1$ and Theorem \ref{anrShape} transfers it to one with
fixed point set $X$.
\end{proof}

\section{Tool declaration}

No specialized software, or artificial intelligence tools were used in the preparation, data collection, or writing of this paper.

\end{document}